\DeclareMathOperator{\NBV}{NBV}
\DeclareMathOperator{\Lip}{Lip}
\DeclareMathOperator{\loc}{loc}
\DeclareMathOperator{\inv}{INV}
\setlist[enumerate]{leftmargin=.5in}
\setlist[itemize]{leftmargin=.5in}
\newcommand{\TheTitle}{Numerical Periodic Normalization at Codim 1 Bifurcations of Limit Cycles in DDEs}
\title{\TheTitle}
\author{Maikel M. Bosschaert\thanks{Data Science Institute, Hasselt University, Diepenbeek Campus, 3590 Diepenbeek, Belgium \email{(maikel.bosschaert@uhasselt.be)}.}
	\and Bram Lentjes\thanks{Department of Mathematics, Hasselt University, Diepenbeek Campus, 3590 Diepenbeek, Belgium \email{(bram.lentjes@uhasselt.be)}.}
	\and Len Spek\thanks{Department of Applied Mathematics, University of Twente, 7500 AE Enschede, The Netherlands \email{(l.spek@utwente.nl)}.}
	\and Yuri A. Kuznetsov\thanks{Department of Mathematics, Utrecht University, 3508 TA Utrecht, The Netherlands and Department of Applied Mathematics, University of Twente, 7500 AE Enschede, The Netherlands \email{(i.a.kouznetsov@uu.nl)}.}
}
\begin{document}
\maketitle

\begin{abstract}
Recent work in \cite{Article1,Article2} by the authors on periodic center manifolds and normal forms for bifurcations of limit cycles in delay differential equations (DDEs) motivates the derivation of explicit computational formulas for the critical normal form coefficients of all codimension one bifurcations of limit cycles. In this paper, we derive such formulas via an application of the periodic normalization method in combination with the functional analytic perturbation framework for dual
semigroups (sun-star calculus). The explicit formulas allow us to distinguish between nondegenerate, sub- and supercritical bifurcations. To efficiently apply these formulas, we introduce the characteristic operator as this enables us to use robust numerical boundary-value algorithms based on orthogonal collocation. Although our theoretical results are proven in a more general setting, the software implementation and examples focus on discrete DDEs. The actual implementation is described in detail and its effectiveness is demonstrated on various models.
\end{abstract}

\begin{keywords}
delay differential equations, dual perturbation theory, sun-star calculus, normal forms, limit cycles, bifurcations, characteristic operator, periodic normalization, orthogonal collocation
\end{keywords}

\begin{MSCcodes}
34K19, 37G15, 47M20, 65L07
\end{MSCcodes}
\begin{sloppypar}

\markboth{M.M. Bosschaert, B. Lentjes, L. Spek and Yu.A. Kuznetsov}{Numerical Periodic Normalization at Codim 1 Bifurcations of Limit Cycles in DDEs}

\section{Introduction} \label{sec:introduction}
Isolated periodic orbits (limit cycles) in smooth delay differential equations (DDEs) play an important role in applications. In the simplest case, often encountered in such applications, DDEs have the form
\begin{equation} \label{eq:discreteDDEintro}
    \dot{x}(t) = f(x(t),x(t-\tau_1),\dots,x(t - \tau_m),\alpha), \quad t \geq 0,
\end{equation}
where $x(t) \in \mathbb{R}^n$, $ \alpha \in \mathbb{R}^p$, $f: \mathbb{R}^{n \times (m+1)} \times \mathbb{R}^p \to \mathbb{R}^n$ is sufficiently smooth and the delays $0 < \tau_1 < \dots < \tau_m$ are constant. In generic systems of the form \eqref{eq:discreteDDEintro} depending on one
control parameter ($p = 1$), a hyperbolic limit cycle exists for an open interval of
parameter values $\alpha$. At a boundary of such an interval, the limit cycle may still exist but become nonhyperbolic so that either a \emph{fold} (limit point), or \emph{period-doubling} (flip), or \emph{Neimark-Sacker} (torus) bifurcation occurs. To determine the nature (nondegenerate, sub- or supercritical) of such a bifurcation, one needs to analyze the critical coefficients of an associated normal form. Up to date, a robust and efficient method to compute such normal form coefficients of a general system of the form \eqref{eq:discreteDDEintro} has not been available. Therefore, we present in this paper rather compact explicit computational formulas for the critical normal form coefficients of all codimension one bifurcations of limit cycles in DDEs. Our results are based on the periodic center manifold \cite{Article1} and normal forms \cite{Article2} for DDEs, meaning that we completely avoid Poincar\'e maps. Furthermore, our obtained formulas are implemented in the freely available software \verb|Julia| package \verb|PeriodicNormalizationDDEs| \cite{Bosschaert2024c}. In addition, the effectiveness, robustness and correctness of these formulas will be verified on various models.

\subsection{Background, aims and challenges} \label{subsec:background}
Consider a {\it classical delay differential equation}
\begin{equation} \label{eq:DDE} \tag{DDE} 
    \dot{x}(t) = F(x_t), \quad t \geq 0,
\end{equation}
where $x(t) \in \mathbb{R}^n$ and $x_t \in X \coloneqq C([-h,0],\mathbb{R}^n)$ represents the \emph{history} of the unknown $x$ at time $t$ defined by $x_t(\theta) \coloneqq x(t+\theta)$ for all $\theta \in [-h,0]$. Here, $0 < h < \infty$ denotes the upper bound of (finite) delays and the state space $X$ becomes a Banach space when equipped with the supremum norm. Moreover, we assume that $F : X \to \mathbb{R}^n$ is a $C^{k+1}$-smooth (nonlinear) operator for some integer $k \geq 1$ and that \eqref{eq:DDE} admits a $T$-periodic solution $\gamma : \mathbb{R} \to \mathbb{R}^n$ such that the periodic orbit (cycle) $\Gamma \coloneqq \{ \gamma_t \in X : t \in \mathbb{R} \}$ is nonhyperbolic. Using the perturbation framework of dual semigroups (sun-star calculus), developed in \cite{Clement1988,Clement1987,Clement1989,Clement1989a,Diekmann1991}, the existence of a $T$-periodic $C^k$-smooth finite-dimensional center manifold $\mathcal{W}_{\loc}^c(\Gamma)$ near $\Gamma$ for \eqref{eq:DDE} has been recently rigorously established in \cite{Article1} by the authors. To study the local dynamics of \eqref{eq:DDE} on $\mathcal{W}_{\loc}^c(\Gamma)$, the authors proved in \cite{Article2} the existence of a special coordinate system on $\mathcal{W}_{\loc}^c(\Gamma)$ such that any solution of \eqref{eq:DDE} on this invariant manifold can be locally parametrized in terms of periodic normal forms. This coordinate system and normal forms are based on the work of Iooss (and Adelmeyer) \cite{Iooss1988,Iooss1999}, who originally developed them to analyze bifurcations of limit cycles in finite-dimensional ODEs. Of course, to study these bifurcations one can alternatively use Poincar\'e maps and their corresponding normal forms \cite{Diekmann1995,Guckenheimer1983,Kuznetsov2005,Szuecs2012}. However, applications of these results to the analysis of concrete finite-dimensional ODEs are exceptional, since higher-order derivatives of the Poincar\'e map are required to compute the critical normal forms coefficients \cite{Kuznetsov2023a,Kuznetsov2005}. These higher-order derivatives are hardly available numerically, see \cite{Witte2014,Witte2013} and the references therein for more details.

To overcome this problem in finite-dimensional ODEs, one would rather like to compute the critical normal form coefficients of the associated periodic normal forms. This task was accomplished by Kuznetsov et al. in \cite{Kuznetsov2005,Witte2014,Witte2013} for all generic codimension one and two bifurcations by using the \emph{periodic normalization method}, see also \cite[Section 5.6]{Kuznetsov2023a} for a gentle introduction on this topic. One of the advantages of this normalization method is that the center manifold reduction and the calculation of the normal form coefficients are performed simultaneously by solving the so-called \emph{homological equation}. The method gives compact explicit
expressions for the critical normal form coefficients in terms of the solutions of certain boundary-value problems. Such formulation enables the use of robust numerical boundary-value solvers based on orthogonal collocation. Furthermore, the computational formulas for all codimension one \cite{Kuznetsov2005} and two \cite{Witte2014,Witte2013} bifurcations of limit cycles have been implemented in the \verb|MATLAB| continuation package \verb|MatCont| \cite{Matcont} with several examples to illustrate their effectiveness.

Going back to the setting of classical DDEs, comparable computational formulas for the critical normal form coefficients of all codimension one and two bifurcations of limit cycles were up-to-date, not available. There are, however, some approaches in the literature \cite[Section 3.2]{Qesmi2018}. For example, Szalai and St\'ep\'an \cite{Szalai2010} derived for an explicit model with one delay the critical normal form coefficients of the Poincar\'e map for the period-doubling bifurcation, but under the very restrictive condition that the delay must be equal to the period of the bifurcating cycle. Similar results were obtained by R\"ost \cite{Roest2005,Roestproceedings,Roest2006} for the Neimark-Sacker bifurcation when the period of the bifurcating cycle is an integer multiple of the delay. These restrictive conditions are not suited for studying bifurcations of limit cycles in general systems of the form \eqref{eq:DDE}, where the period of the cycle usually varies with parameters. We also refer to the work from Church and Liu \cite{Church2018,Church2019,Church2021,Church2020} on some analyses of codimension one bifurcations for periodic impulsive DDEs by employing Poincar\'e maps. They could obtain computational formulas in simple models when the periodic solution and the (adjoint) eigenfunctions are known explicitly. Lastly, we highlight recent works of Z\'athureck\'y \cite{Zathurecky2023,Zathurecky2023a} on computational formulas for critical normal form coefficients of the period-doubling bifurcation in DDEs (and ODEs) using the Lyapunov-Schmidt reduction \cite{Golubitsky1985}. While his formulas are nice and compact, implementing them in software remains challenging. Moreover, Z\'athureck\'y's approach has theoretical limitations, particularly in studying the Neimark-Sacker bifurcation and codimension two bifurcations involving resonances. Therefore, we derive in this paper explicit computational formulas for the critical normal form coefficients of all codimension one bifurcations of limit cycles for classical DDEs by combining periodic normalization with sun-star calculus. Notably, our formulas closely resemble their finite-dimensional ODE counterparts \cite{Kuznetsov2005,Witte2014,Witte2013,Kuznetsov2023a}, which can be obtained by taking the limit as the delay $h \downarrow 0$ in our expressions.

Due to the infinite-dimensional nature of \eqref{eq:DDE}, one would expect that the implementation of our formulas into a software package would be difficult and computationally expensive. However, if one takes a look at the explicit computational formulas for the (critical) normal coefficients of all codimension one and two bifurcations of equilibria in classical DDEs \cite{Bosschaert2020,Bosschaert2024a,Janssens2010}, one notices immediately that these formulas only rely on the computation of finite-dimensional objects. Indeed, the existence of a characteristic matrix \cite{Kaashoek1992,Diekmann1995,Kaashoek2022} to study spectral properties of linear autonomous DDEs allows one to reduce infinite-dimensional spectral problems in $X$ towards linear problems in $\mathbb{C}^n$. Using this reduction, such explicit computational formulas can be derived in terms of Jordan chains of the (adjoint) characteristic matrix.

The natural question arises of whether such a characteristic matrix approach is also suited to study the spectral properties of periodic linear DDEs. Over the years, several characteristic matrices have been developed, though often under strict conditions. For example, Hale and Verduyn Lunel established in \cite{Hale1993} a characteristic matrix when the delay is a multiple of the period. By considering various time-delayed models, it was observed in 2000 by Just \cite{Just2000}, in 2001 by Verduyn Lunel \cite{Lunel2001}, and in 2005 by Skubachevskii and Walter \cite{Skubachevskii2006}, that analyzing the spectral data is more challenging when the period becomes more irrational relative to the delay. It lasted until 2006 that Szalai et al. \cite{Szalai2006} provided a general construction of a characteristic matrix $\Delta_n(z) \in \mathbb{C}^{n \times n}$ for periodic linear DDEs. However, Sieber and Szalai \cite{Sieber2011} founded that $\Delta_n$ typically has poles in the complex plane, which could coincide with a Floquet multiplier of interest. Therefore, they modified $\Delta_n(z)$ to $\Delta_{nk}(z) \in \mathbb{C}^{nk \times nk}$ such that $\Delta_{nk}$ has no poles inside a disk of finite radius (dependent on $k$) in the complex plane, but may still have poles outside this disk. We also mention a recent work by Kaashoek and Verduyn Lunel \cite[Chapter 10]{Kaashoek2022} on characteristic matrices for periodic linear DDEs, where the monodromy operator is written as a sum of a Volterra and a finite-rank operator. This decomposition ensures the existence of a characteristic matrix \cite[Chapter 6]{Kaashoek2022}, which works effectively when the period is an integer multiple of the delay. Their framework was also recently applied in \cite{Wolff2022a} to study symmetric periodic linear DDEs, relevant to equivariant Pyragas control \cite{Pyragas1992}. While they assert that their results hold for more irrational periods relative to the delay \cite[Section 11.1]{Kaashoek2022}, the construction becomes more complex and will be detailed in a future publication.

These approaches show that a simple characteristic matrix construction does not naturally fit within the framework of periodic linear DDEs. To address this issue, we introduce the \emph{characteristic operator} $\Delta(z)$, which is more suitable in this context. Although $\Delta(z)$ acts on an infinite-dimensional function space, it reduces our spectral problems dramatically. Specifically, it induces linear (inhomogeneous) DDEs on the interval $[0,T]$ with $T$-(anti)periodic boundary conditions, allowing the use of the well-known orthogonal collocation method \cite{Engelborghs2001,Engelborghs2002a} to approximate the solution. Through this reduction, our explicit computational formulas for the critical normal form coefficients of all codimension one bifurcations of limit cycles can be expressed in terms of the Jordan chains of the (adjoint) characteristic operator, mirroring the computational reduction used for bifurcations of equilibria in classical DDEs.

\subsection{Overview}
The paper is organized as follows. In \cref{sec:periodic center manifolds} we review the most important results from \cite{Article1,Article2} on periodic smooth finite-dimensional center manifolds and normal forms for all codimension one bifurcations of limit cycles in classical DDEs.

In \cref{sec:periodic spectral computations} we derive explicit computational formulas for the (adjoint) (generalized) eigenfunctions in terms of the periodic Jordan chains of the (adjoint) characteristic operator. When we derive the critical normal form coefficients in the upcoming section, it turns out that we will encounter periodic linear operator equations of a certain type. Therefore, we study in addition the solvability of such equations and invoke a suited Fredholm alternative.

In \cref{sec:periodic normalization} we derive the explicit computational formulas for the critical normal form coefficients of the fold \eqref{eq:normalformcoeff fold}, period-doubling \eqref{eq:normalformcoeff pd} and Neimark-Sacker bifurcation \eqref{eq:normalformcoeff NS} in classical DDEs.

In \cref{sec:implementation} we describe the implementation of the critical normal form coefficients for discrete DDEs in detail. Along with our proposed direct implementation scheme, we also present an efficient method to numerically evaluate our formulas. 

In \cref{sec:examples} we demonstrate the correctness, effectiveness and accuracy of our implemented numerical methods through various models and examples. Additionally, we show that our approach is more efficient and reliable than the pseudospectral discretization method for DDEs \cite{Breda2016,Breda2022,Breda2022b,Diekmann2020,Ando2022,Wolff2021}. A detailed explanation of our efficient numerical implementation is provided in the (online) \hyperlink{mysupplement}{Supplement}.

\section{Periodic center manifolds and normal forms for DDEs} \label{sec:periodic center manifolds}
In this section, we primarily summarize the results from \cite{Article1,Article2} on periodic center manifolds and normal forms for bifurcations of limit cycles in classical DDEs. All unreferenced claims related to basic properties of (time-dependent) perturbations of delay equations can be found in the article \cite{Clement1988} and the book \cite{Diekmann1995}. For an introduction to the general theory on (adjoint) semigroups, we refer to the well-known books \cite{Engel2000,Neerven1992}.

Consider a $C^{k+1}$-smooth operator $\hat{F} : X \times \mathbb{R} \to \mathbb{R}^n$ together with the (nonlinear) DDE
\begin{equation} \label{eq:DDEparameter}
    \dot{x}(t) = \hat{F}(x_t,\alpha), \quad t \geq 0,
\end{equation}
where $\alpha \in \mathbb{R}$ is a \emph{parameter}. Let $\alpha_0 \in \mathbb{R}$ denote a \emph{critical parameter value} of interest and consider \eqref{eq:DDE} with $F = \hat{F}(\cdot,\alpha_0)$. To study the dynamics near the periodic solution $\gamma$, it is convenient to translate $\gamma$ towards the origin. More specifically, if $x$ is a solution of \eqref{eq:DDE}, then $y \coloneqq x - \gamma$ satisfies the periodic (nonlinear) DDE 
\begin{equation} \label{eq:T-DDEphi1}
\dot{y}(t) = L(t)y_t + G(t,y_t), \quad t \geq 0, \\\
\end{equation}
where the $C^k$-smooth bounded linear operator $L(t) \coloneqq DF(\gamma_t) \in \mathcal{L}(X,\mathbb{R}^n)$ denotes the Fr\'echet derivative of $F$ evaluated at $\gamma_t$ and the $C^k$-smooth operator $G(t,\cdot) \coloneqq F(\gamma_t + \cdot) - F(\gamma_t) - L(t)$ consists solely of nonlinear terms. Regarding the linear part in \eqref{eq:T-DDEphi1}, it is traditional to apply a vector-valued version of the Riesz representation theorem \cite[Theorem 1.1]{Diekmann1995} as
\begin{equation} \label{eq:L(t)varphi}
    L(t)\varphi = \int_0^h d_2 \zeta(t,\theta) \varphi(-\theta) \eqqcolon \langle \zeta(t,\cdot),\varphi \rangle, \quad \forall t \in \mathbb{R}, \ \varphi \in X.
\end{equation}
The \emph{kernel} $\zeta : \mathbb{R} \times [0,h] \to \mathbb{R}^{n \times n}$ is a matrix-valued function, $\zeta(t,\cdot)$ is of bounded variation, right-continuous on the open interval $(0,h)$, $T$-periodic in the first variable and normalized by the requirement that $\zeta(\cdot,0) = 0$. The integral appearing in \eqref{eq:L(t)varphi} is of Riemann-Stieltjes type and the subscript in $d$ reflects on the fact that we integrate over the second variable of $\zeta$.

Let $U \coloneqq \{U(t,s)\}_{t \geq s} \subseteq \mathcal{L}(X)$ denote the strongly continuous forward evolutionary system from \cite[Theorem XII.2.7 and XII.3.1]{Diekmann1995} corresponding to the linearization of \eqref{eq:DDE} around $\Gamma$, that is \eqref{eq:T-DDEphi1} with $G = 0$. The associated (generalized) generator of $U$ (at time $s$) reads
\begin{equation} \label{eq:D(As)}
    \mathcal{D}(A(s)) = \{ \varphi \in C^1([-h,0],\mathbb{R}^n) : \varphi'(0) = L(s) \varphi \}, \quad A(s)\varphi = \varphi'.
\end{equation}
Due to compactness, the spectrum $\sigma(U(s+T,s))$ of the \emph{monodromy operator} (at time $s$) $U(s+T,s)$ is a countable set in $\mathbb{C}$, independent of the starting time $s$, consisting of $0$ and isolated eigenvalues of finite type, called \emph{Floquet multipliers}, that can possibly accumulate to $0$. It is known that $1$ is always a Floquet multiplier, called the \emph{trivial Floquet multiplier}, with associated eigenfunction $\dot{\gamma}_s$. The number $\sigma \in \mathbb{C}$ satisfying $\lambda = e^{\sigma T}$ is called the \emph{Floquet exponent} (of $\lambda$) and note that this number is uniquely determined up to an additive multiple of $\frac{2 \pi i }{T}$. The \emph{(generalized) eigenspace} (at time $s$) associated to $\lambda$, denoted by $E_\lambda(s) \coloneqq \mathcal{N} ((\lambda I - U(s+T,s))^{k_\lambda})$, is finite-dimensional, where $1 \leq k_\lambda < \infty$ is the order of a pole $\lambda = \mu$ of the map $\mu \mapsto (\mu I - U(s+T,s))^{-1}$. The dimension $m_\lambda$ of $E_\lambda(s)$ is called the \emph{algebraic multiplicity} (of $\lambda$) while the \emph{geometric multiplicity} (of $\lambda$) is the dimension of the eigenspace $\mathcal{N} (\lambda I - U(s+T,s))$. Hence, the set of Floquet multipliers on the unit circle $\Lambda_0 \coloneqq \{ \lambda \in \sigma(U(s+T,s)) : |\lambda| = 1\}$ is finite, say it has cardinality $1 \leq n_0 + 1 < \infty$ when counted with algebraic multiplicity. Define the $(n_0 +1)$-dimensional \emph{center subspace} (at time $s$) by $X_0(s) \coloneqq \oplus_{\lambda \in \Lambda_0} E_{\lambda}(s)$ and let $X_0 \coloneqq \{ (t,\varphi) \in \mathbb{R} \times X : \varphi \in X_0(t) \}$ denote the \emph{center bundle}. In this setting, the periodic local center manifold theorem \cite[Corollary 15]{Article1} for \eqref{eq:DDE} applies. Hence, there exists a $T$-periodic $C^k$-smooth $(n_0+1)$-dimensional locally positively invariant manifold $\mathcal{W}_{\loc}^{c}(\Gamma)$ around $\Gamma$, called the \emph{(local) center manifold around $\Gamma$}, whose tangent bundle is $X_0$.

Our next objective is to revisit how linear periodic classical DDEs naturally align with the framework of sun-star calculus. Let us start with a convention. For $\mathbb{K} \in \{ \mathbb{R}, \mathbb{C} \}$ let $\mathbb{K}^n$ be the linear space of column vectors, while $\mathbb{K}^{n \star}$ denotes the linear space of row vectors, all with components in $\mathbb{K}$. A representation theorem by Riesz \cite{Riesz1914} enables us to identify $X^\star = C([-h,0],\mathbb{R}^n)^\star$ with the Banach space $\NBV([0,h],\mathbb{R}^{n\star})$ consisting of functions $\zeta : [0,h] \to \mathbb{R}^{n \star}$ that are normalized by $\zeta(0) = 0$, are continuous from the right on $(0,h)$ and have bounded variation. Because $X$ is not reflexive, the dual backward evolutionary system $U^\star \coloneqq \{U^\star(s,t)\}_{s \leq t} \subseteq \mathcal{L}(X^\star)$, where $U^\star(s,t) \coloneqq U(t,s)^\star$ for all $s \leq t$ is only weak$^\star$ continuous. This is also visible on the generator level, as the adjoint $A^\star(s) \coloneqq [A(s)]^\star$ of $A(s)$ is only the weak$^\star$ (generalized) generator of $U^\star$ and there holds
\begin{align} \label{eq:D(Astars)}
    \mathcal{D}(A^\star(s)) = \bigg\{  f \in & \NBV([0,h],\mathbb{R}^{n \star}) : \  f(\theta) =  f(0^+) + \int_0^\theta g(\sigma) d\sigma \mbox{ for all }  \theta \in (0,h], \nonumber \\
     & \ g \in \NBV([0,h],\mathbb{R}^{n\star}) \mbox{ and }  g(h) = 0 \bigg \}, \quad A^\star(s) f = f' + f(0^+) \zeta(s,\cdot),
\end{align}
where $f(0^+) \coloneqq \lim_{t \downarrow 0} f(t)$ and the function $g$ is called the \emph{derivative} of $f$. The $s$-independent \emph{sun dual} $X^\odot$ is the maximal subspace of $X^\star$ where $t \mapsto U^\star(s,t)$ is strongly continuous, and it follows from \cite[Lemma 1]{Article2} that $\mathcal{D}(A^\star(s))$ is norm dense in $X^\odot$. Since the $\NBV$-functions are dense in $L^1$, $X^\odot$ has the same description as $\mathcal{D}(A^\star(s))$, but $g \in L^1([0,h],\mathbb{R}^{n\star})$. As an element $f \in X^\odot$ is completely specified by $f(0^+) \in \mathbb{R}^{n \star}$ and $g \in L^1([0,h],\mathbb{R}^{n\star})$, we obtain the isomorphism $X^{\odot} \cong \mathbb{R}^{n \star} \times L^1([0,h],\mathbb{R}^{n\star})$. The natural dual pairing $\langle \cdot , \cdot \rangle : X^\odot \times X \to \mathbb{R}$ between $\varphi^\odot = (c,g) \in X^\odot$ and $\varphi \in X$ reads
\begin{equation} \label{eq:Xsunpairing} 
    \langle \varphi^\odot, \varphi \rangle = c \varphi(0) + \int_0^h g(\theta) \varphi(-\theta) d\theta.
\end{equation}
Restricting $U^\star(s,t)$ to $X^\odot$ for all $s \leq t$ yields a strongly continuous backward evolutionary system $U^\odot \coloneqq \{U^\odot(s,t)\}_{s \leq t}$ on the Banach space $X^\odot$ with (generalized) generator $A^\odot(s) \coloneqq [A(s)]^\odot$ given by the part of $A^\star(s)$ in $X^\odot$, i.e.,
\begin{equation} \label{eq:D(Asuns)}
    \mathcal{D}(A^\odot(s)) = \{(c,g) \in \mathcal{D}(A^\star(s)) : g+c\zeta(s,\cdot) \in X^{\odot} \}, \quad A^\odot(s)(c,g) = g + c\zeta(s,\cdot).
\end{equation}
Note that $A^\odot(s)$ is densely defined closed linear operator on $X^\odot$. Repeating the construction once more yields the weak$^\star$ (generalized) generator
\begin{equation*}
    \mathcal{D}(A^{\odot \star}(s)) = \{ (\alpha,\varphi) \in X^{\odot \star} : \varphi \in \Lip([-h,0],\mathbb{R}^n) \mbox{ and } \varphi(0) = \alpha \}, \quad A^{\odot \star}(s)j\varphi = (L(s)\varphi,\varphi'),
\end{equation*}
where the dual space $X^{\odot \star}$ of $X^{\odot}$, and its restriction to the maximal subspace of strong continuity $X^{\odot \odot}$, have the representations $X^{\odot \star} \cong \mathbb{R}^{n} \times L^\infty([0,h], \mathbb{R}^{n})$ and $X^{\odot \odot} \cong \mathbb{R}^{n} \times C([0,h], \mathbb{R}^{n})$. The linear canonical embedding $j: X \to X^{\odot \star}$ has action $j\varphi = (\varphi(0),\varphi)$, mapping $X$ onto $X^{\odot \odot}$, meaning that $X$ is $\odot$-\emph{reflexive} with respect to $U$. The natural dual pairing $\langle \cdot, \cdot \rangle : X^{\odot \star} \times X^\odot \to \mathbb{R}$ between $\varphi^{\odot \star} = (a,\psi) \in X^{\odot \star}$ and $\varphi^\odot = (c,g) \in X^{\odot}$ is given by
\begin{equation} \label{eq:Xsunstarpairing}
    \langle \varphi^{\odot \star}, \varphi^\odot \rangle = ca + \int_0^h g(\theta) \psi(-\theta) d\theta.
\end{equation}

\begin{remark} \label{remark:DDEperturbation}
$A^{\odot \star}(s)$ can also be obtained as a bounded perturbation of the closed linear operator $A_0^{\odot \star}$ since $\mathcal{D}(A^{\odot \star}(s)) = \mathcal{D}(A_0^{\odot \star})$ and $A^{\odot \star}(s)j = A_0^{\odot \star}j + B(s)$, see \cite[Chapter II and III]{Diekmann1995} and \cite[Section 2]{Article2}. Here, the $T$-periodic bounded linear perturbation $B \in C^k(\mathbb{R},\mathcal{L}(X,X^{\odot \star}))$ is defined by
\begin{equation} \label{eq:perturbationB}
    B(t)\varphi \coloneqq [L(t)\varphi]r^{\odot \star}, \quad \forall t \in \mathbb{R}, \ \varphi \in X,
\end{equation}
and we used the conventional shorthand notation $wr^{\odot \star} \coloneqq \sum_{i = 1}^{n} w_ir_i^{\odot \star}$ for all $w = (w_1,\dots,w_n) \in \mathbb{R}^{n}$, where $r_i^{\odot \star} \coloneqq (e_i,0)$ and $e_i$ denotes the $i$th standard basic vector of $\mathbb{R}^n$ for all $i = 1,\dots,n$. \hfill $\lozenge$
\end{remark}

\subsection{Periodic normal forms for codim 1 bifurcations} \label{subsec:periodic normal forms}
Our next aim is to describe the local dynamics on the center manifold $\mathcal{W}_{\loc}^{c}(\Gamma)$ in terms of normal forms. This task has been accomplished by the authors in \cite{Article2} via lifting the results from \cite{Iooss1988,Iooss1999} on periodic normal forms from finite-dimensional ODEs towards the setting of classical DDEs. It turns out that any solution of \eqref{eq:DDE} on the invariant manifold $\mathcal{W}_{\loc}^{c}(\Gamma)$ can be parametrized locally near the cycle $\Gamma$ in $(\tau,\xi)$-coordinates, where $\tau \in [0,lT]$ for some $l \in \mathbb{N}$ is a phase coordinate along $\Gamma$, and $\xi$ is a real or complex coordinate along this phase that is transversal to $\Gamma$, see \cite[Figure 1]{Article2} or \cite[Figure 5.18 and 5.20]{Kuznetsov2023a} for a visualization. The number $l$ depends on the location and multiplicities of the Floquet multipliers on the unit circle. Moreover, the periodic normal forms for the classical DDEs turn out to be exactly the same as for the finite-dimensional ODEs, see in particular \cite[Theorem 24, 25 and 26]{Article2} compared to \cite[Theorem III.7, III.10 and III.13]{Iooss1999}. As we are interested in all generic codimension one bifurcations of limit cycles, we will restrict ourselves to the fold ($l=1$), period-doubling ($l=2$) and Neimark-Sacker ($l=1$) 
bifurcation. Explicit computations of the periodic normal forms for these bifurcations in ODEs, and thus classical DDEs, can be found in \cite[Section III.2]{Iooss1999} and \cite[Appendix A]{DellaRossa2011}, which is an extended arXiv version of \cite{Witte2013}. To discuss the local dynamics of \eqref{eq:DDEparameter} in the vicinity of $\Gamma$ for parameter values $\alpha$ near the critical parameter value $\alpha_0$, we will assume that \eqref{eq:DDEparameter} is \emph{generic}, meaning that the \emph{nondegenarcy} and \emph{transversality conditions} are satisfied, see \cite[Section 2]{Kuznetsov2023a} for more information. The nondegenarcy conditions are specified for each bifurcation individually, while a discussion on the transversality conditions is provided \cref{remark:parameterdependentCMT}.

\subsubsection{Fold bifurcation}
The critical cycle $\Gamma$ undergoes a \emph{fold} (limit point) bifurcation if $\Lambda_0 = \{1\}$, and the trivial Floquet multiplier has algebraic multiplicity two and geometric multiplicity one. The corresponding $2$-dimensional center manifold $\mathcal{W}_{\loc}^{c}(\Gamma)$ can be parametrized locally near $\Gamma$ in $(\tau,\xi)$-coordinates, where $\tau \in [0,T]$ and $\xi$ is a real coordinate on $\mathcal{W}_{\loc}^{c}(\Gamma)$ transverse to $\Gamma$. It follows from \cite[Theorem 25]{Article2} that the critical periodic normal form at the fold bifurcation is 
\begin{equation} \label{eq:normal form fold}
    \begin{dcases}
    \dot{\tau} = 1 + \xi + a\xi^2 + \mathcal{O}(\xi^3),\\
    \dot{\xi} = b\xi^2 + \mathcal{O}(\xi^3),
    \end{dcases}
\end{equation}
where $a,b \in \mathbb{R}$ and the $\mathcal{O}(\xi^3)$-terms are $T$-periodic in $\tau$. The dot above $\tau$ and $\xi$ in \eqref{eq:normal form fold} refers to a derivative with respect to the original time variable $t$. This convention will also be used in the upcoming periodic normal forms, see \eqref{eq:normal form pd} and \eqref{eq:normal form NS}. The fold bifurcation is nondegenerate if $b\neq 0$ and then two periodic solutions of \eqref{eq:DDEparameter} collide at $\Gamma$ and disappear when the parameter $\alpha$ passes the critical value $\alpha_0$, see \cite[Figure 5.19]{Kuznetsov2023a} for a visualization. This normal form coefficient serves as a test function to detect a codimension two bifurcation of limit cycles, namely a \emph{cusp} (CPC) bifurcation occurs at $b=0$. 

\subsubsection{Period-doubling bifurcation}
The critical cycle $\Gamma$ undergoes a \emph{period-doubling} (flip) bifurcation if $\Lambda_0 = \{\pm 1\}$, and both Floquet multipliers are simple. The corresponding $2$-dimensional center manifold $\mathcal{W}_{\loc}^{c}(\Gamma)$ can be parametrized locally near $\Gamma$ in $(\tau,\xi)$-coordinates, where $\tau \in [0,2T]$ and $\xi$ is a real coordinate on $\mathcal{W}_{\loc}^{c}(\Gamma)$ transverse to $\Gamma$. It follows from \cite[Theorem 26]{Article2} that the critical periodic normal form at the period-doubling bifurcation is 
\begin{equation} \label{eq:normal form pd}
    \begin{dcases}
    \dot{\tau} = 1 + a\xi^2 + \mathcal{O}(\xi^4),\\
    \dot{\xi} = c\xi^3 + \mathcal{O}(\xi^4),
    \end{dcases}
\end{equation}
where $a,c \in \mathbb{R}$ and the $\mathcal{O}(\xi^4)$-terms are $2T$-periodic in $\tau$. The critical cycle is stable within $\mathcal{W}_{\loc}^c(\Gamma)$ if $c < 0$ (supercritical) and unstable if $c > 0$ (subcritical). In the former case, a stable cycle of approximately double period spawns off from $\Gamma$ in \eqref{eq:DDEparameter} when the parameter $\alpha$ passes the critical value $\alpha_0$, see \cite[Figure 5.21]{Kuznetsov2023a} for a visualization. In the latter case, an unstable cycle of approximately double period spawns off from $\Gamma$ in \eqref{eq:DDEparameter} when the parameter $\alpha$ passes the critical value $\alpha_0$. This normal form coefficient serves as a test function to detect a codimension two bifurcation of limit cycles, namely a \emph{generalized period-doubling} (GPD) bifurcation occurs at $c=0$.

\subsubsection{Neimark-Sacker bifurcation}
The critical cycle $\Gamma$ undergoes a \emph{Neimark-Sacker} (torus) bifurcation if $\Lambda_0 = \{1,e^{\pm i \omega T}\}$, where all Floquet multipliers on the unit circle are simple and do not satisfy any strong resonance conditions, i.e.,
\begin{equation} \label{eq:resonances}
    e^{iq \omega T} \neq 1, \quad \forall q \in \{1,2,3,4\}.
\end{equation}
The corresponding $3$-dimensional center manifold $\mathcal{W}_{\loc}^{c}(\Gamma)$ can be parametrized locally near $\Gamma$ in $(\tau,\xi)$-coordinates, where $\tau \in [0,T]$ and $\xi$ is a complex coordinate on $\mathcal{W}_{\loc}^{c}(\Gamma)$ transverse to $\Gamma$. It follows from \cite[Theorem 24]{Article2} that the critical periodic normal form at the Neimark-Sacker bifurcation is 
\begin{equation} \label{eq:normal form NS}
    \begin{dcases}
    \dot{\tau} = 1 + a|\xi|^2 + \mathcal{O}(|\xi|^4),\\
    \dot{\xi} =  i \omega \xi + d\xi|\xi|^2 + \mathcal{O}(|\xi|^4),
    \end{dcases}
\end{equation}
where $a \in \mathbb{R}, d \in \mathbb{C}$ and the $\mathcal{O}(|\xi|^4)$-terms are $T$-periodic in $\tau$. The critical cycle is stable within $\mathcal{W}_{\loc}^{c}(\Gamma)$ if $\Re d < 0$ (supercritical) and unstable if $\Re d > 0$ (subcritical). In the former case, there exists a unique stable invariant torus that spawns off from $\Gamma$ in \eqref{eq:DDEparameter} when $\alpha$ passes the critical value $\alpha_0$, see \cite[Figure 5.22]{Kuznetsov2023a} for a visualization. In the latter case, there exists a unique unstable invariant torus that spawns off from $\Gamma$ in \eqref{eq:DDEparameter} when $\alpha$ passes the critical value $\alpha_0$. This normal form coefficient serves as a test function to detect a codimension two bifurcation of limit cycles, namely a \emph{Chenciner} (CH) bifurcation occurs at $ \Re d = 0$.

\begin{remark} \label{remark:parameterdependentCMT}
While the nondegeneracy conditions can be specified from the critical periodic normal form, the transversality conditions can only be derived from the parameter-dependent periodic normal form. One can prove along the lines of \cite{Article2} that these normal forms for finite-dimensional ODEs \cite[Theorem III.19]{Iooss1999} coincide with that of classical DDEs. Hence, the transversality conditions for all generic codimension one bifurcations in finite-dimensional ODEs, and thus classical DDEs, can be found in \cite[Section 5.6.2]{Kuznetsov2023a}. However, as the transversality conditions are in general satisfied for systems of the form \eqref{eq:DDEparameter}, we will not discuss these here. \hfill $\lozenge$
\end{remark}

\section{Periodic spectral computations} \label{sec:periodic spectral computations}
As we are interested in computing critical normal form coefficients via periodic normalization, it is clear from the finite-dimensional periodic normalization ODE-articles \cite{Kuznetsov2005,Witte2013,Witte2014} that we have to compute (adjoint) (generalized) eigenfunctions of the monodromy operator. Suppose that $\lambda$ is a Floquet multiplier of algebraic multiplicity $m_\lambda$. If $\{\phi_s^0,\dots,\phi_s^{m_\lambda - 1} \}$ is an ordered basis of $E_\lambda(s)$, it follows from \cite[Theorem XIII.3.3]{Diekmann1995} that $\{U(\tau,s)\phi_s^0,\dots,U(\tau,s)\phi_s^{m_\lambda - 1} \}$ is an ordered basis of $E_\lambda(\tau)$ for all $\tau \in \mathbb{R}$. However, this basis is not necessarily $T$-periodic, and is therefore not suited for applying periodic normalization. Furthermore, we also need a $T$-periodic dual basis of $E_\lambda(s)$, that is a $T$-periodic basis of the $m_\lambda$-dimensional subspace $E_\lambda^\odot(s) \coloneqq \mathcal{N}((\lambda I - U^\odot(s-T,s))^{k_\lambda})$ of $X^\odot$, where $k_\lambda$ was already defined in \cref{sec:periodic center manifolds}. The functions spanning a basis of $E_\lambda^\odot(s)$ are called \emph{adjoint (generalized) eigenfunctions}. 

The existence of $T$-periodic and sufficiently smooth (adjoint) (generalized) eigenfunctions for classical DDEs has already been proven by the authors in \cite{Article2}, but an explicit representation of these bases was not yet available. In \cref{subsec: Jordan chains via characteristic operators} we show that the (adjoint) (generalized) eigenfunctions can be computed in terms of Jordan chains of the so-called \emph{(adjoint) characteristic operator}, a generalization of the well-known (adjoint) characteristic matrix \cite{Diekmann1995,Kaashoek1992,Kaashoek2022}. Moreover, when applying periodic normalization in \cref{sec:periodic normalization} to classical DDEs, it turns out that we have to solve periodic linear operator equations on an infinite-dimensional space consisting of periodic functions. The solvability of these linear operator equations will be discussed in \cref{subsec:solvability}. Let us make the following remark regarding spectral theory for DDEs.

\begin{remark} \label{remark:complexification} 
For using spectral theory on the real Banach space $X$, we have to \emph{complexify} $X$ and all discussed operators on $X$. This is not entirely trivial and is discussed in \cite[Section III.7 and Section IV.2]{Diekmann1995}. To clarify, by the spectrum of a real (unbounded) linear operator $L$ defined on (a subspace of) $X$, we mean the spectrum of its complexification $L_\mathbb{C}$ on (a subspace of) the complexified Banach space $X_\mathbb{C}$. For the ease of notation, we omit the additional symbols. \hfill $\lozenge$
\end{remark}

\subsection{Periodic smooth Jordan chains via characteristic operators} \label{subsec: Jordan chains via characteristic operators}
For a (complex) Banach space $E$ and integer $l \geq 0$, we define $C_T^l(\mathbb{R},E)$ as the (complex) Banach space consisting of $T$-periodic $C^l$-smooth $E$-valued functions defined on $\mathbb{R}$ with the standard $C^l$-norm. 

For the construction of the $T$-periodic (generalized) eigenfunctions, it is clear that we have to look for functions $\varphi_i \in C_T(\mathbb{R},X)$, where $X = C([-h,0],\mathbb{R}^n)$. It looks at first sight not easy to find a formula for such functions that is at a later stage easy to implement in a software package. This is precisely where the \emph{characteristic operator} $\Delta$ will help us a lot. The main idea of the characteristic operator is to reduce spectral problems in $C_T(\mathbb{R},X)$ towards linear problems in $C_T(\mathbb{R},\mathbb{C}^n)$. This reduction is also visible in the original characteristic matrix approach since in that setting, the spectral problems in $X$ are reduced to linear problems in $\mathbb{C}^n$, see \cite[Section IV.4]{Diekmann1995} and \cite[Section II.1]{Kaashoek1992} for more information. The idea of the construction of the characteristic operator is not to work directly with the monodromy operator, but rather with a Floquet eigenfunction. Indeed, recall from \cite[Lemma 8.1.2]{Hale1993} that $\sigma \in \mathbb{C}$ is a Floquet exponent if and only if there exists a nonzero function $v$ of the form $v(\tau) = e^{\sigma \tau}q(\tau)$ with $q \in C_T^1(\mathbb{R},\mathbb{R}^n)$ that solves the periodic linear DDE \eqref{eq:T-DDEphi1} with $G = 0$. Plugging everything into this equation while using \eqref{eq:L(t)varphi} yields
\begin{equation} \label{eq:motivationcharac}
    \dot{q}(\tau) + \sigma q(\tau) - \int_0^h d_2 \zeta (\tau,\theta) e^{-\sigma \theta} q(\tau-\theta) = 0, \quad \forall \tau \in \mathbb{R}.
\end{equation}
To cast this equation into a functional analytic setting, introduce for any $z \in \mathbb{C}$ the densely defined linear operator $\Delta(z) : \mathcal{D}(\Delta(z)) \coloneqq C_T^1(\mathbb{R},\mathbb{C}^n) \subseteq C_T(\mathbb{R},\mathbb{C}^n) \to C_T(\mathbb{R},\mathbb{C}^n)$ by
\begin{equation} \label{eq:Delta(z)q}
    (\Delta(z)q)(\tau) \coloneqq \dot{q}(\tau) + z q(\tau) - \int_0^h d_2 \zeta (\tau,\theta) e^{-z \theta} q(\tau-\theta), \quad \forall q \in C_T^1(\mathbb{R},\mathbb{C}^n),
\end{equation}
Hence, $\sigma$ is a Floquet exponent if and only if there exists a nonzero $q \in C_T^1(\mathbb{R},\mathbb{C}^n)$ such that $\Delta(\sigma)q = 0$, i.e. $\sigma$ is a \emph{characteristic value} of $\Delta$. In the following lemma, we will study some properties of the \emph{characteristic operator} $\Delta(z)$. To do so, let us first recall some notions from spectral theory for (unbounded) linear operators on a Banach space $E$, see \cite{Engel2000,Hille1957,Taylor1986} for more information. For a closable linear operator $A: \mathcal{D}(A) \subseteq E \to E$, we say that a complex number $z$ belongs to the \emph{resolvent set} $\rho(A)$ of $A$ if the operator $zI - A$ is injective, has dense range $\mathcal{R}(zI-A) \supseteq \mathcal{D}(A)$ and the \emph{resolvent} of $A$ at $z$ defined by $(z I - A)^{-1}$ is a bounded linear operator from $\mathcal{R}(zI-A)$ to $\mathcal{D}(A)$. The \emph{spectrum} $\sigma(A)$ of $A$ is defined to be the complement of $\rho(A)$ in $\mathbb{C}$, and the \emph{point spectrum} $\sigma_p(A)$ of $A$ is the set of those $\mu \in \mathbb{C}$ such that $\mu I - A$ is not injective, i.e. $A \varphi = \mu \varphi$ for some nonzero \emph{eigenvector} $\varphi \in \mathcal{D}(A)$ corresponding to the \emph{eigenvalue} $\mu$. When $A$ is a closed linear operator, recall from the closed graph theorem that $z \in \rho(A)$ if and only if $zI - A$ is a linear bijection.

\begin{lemma} \label{lemma:Delta(z)closed}
The operator $\Delta(z)$ is closed and $\rho(\Delta(z)) \neq \emptyset$ for all $z \in \mathbb{C}$, and there holds
\begin{align} 
\begin{split} \label{eq:not inv is floquet}
    \{ \sigma \in \mathbb{C} : \Delta(\sigma) \mbox{ is not invertible} \} &= \{ \sigma \in \mathbb{C} : \sigma \mbox{ is a characteristic value of } \Delta \} \\
    & = \{\sigma \in \mathbb{C} : \sigma \mbox{ is a Floquet exponent} \}.    
\end{split}
\end{align}
\end{lemma}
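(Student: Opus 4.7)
My plan is to combine a splitting of $\Delta(z)$ into an unbounded ``differentiation'' part and a bounded ``delay'' part with the Floquet theory of the linearization around $\Gamma$. I would write $\Delta(z) = D + M(z)$, where $D q \coloneqq \dot q$ on $\mathcal{D}(D) = C_T^1(\mathbb{R},\mathbb{C}^n)$ is the derivative operator on $C_T(\mathbb{R},\mathbb{C}^n)$, and
\[
    (M(z) q)(\tau) \coloneqq z q(\tau) - \int_0^h d_2 \zeta(\tau,\theta)\, e^{-z \theta}\, q(\tau - \theta), \qquad q \in C_T(\mathbb{R},\mathbb{C}^n),
\]
defines a bounded operator $M(z) \in \mathcal{L}(C_T(\mathbb{R},\mathbb{C}^n))$, using $T$-periodicity of $\zeta$ in its first argument together with the uniform bound on the total variation of $\zeta(\tau,\cdot)$. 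A standard uniform-convergence argument shows that $D$ is closed, hence so is $\Delta(z) = D + M(z)$ since $\mathcal{D}(\Delta(z)) = \mathcal{D}(D)$ and bounded perturbations preserve closedness.

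To exhibit a point in $\rho(\Delta(z))$ I would compute the resolvent of $D$ explicitly. The spectrum of $D$ equals the discrete set $\{2\pi i k / T : k \in \mathbb{Z}\}$, and for $\mu$ outside this set the resolvent $R_\mu \coloneqq (\mu I - D)^{-1} : C_T(\mathbb{R},\mathbb{C}^n) \to C_T^1(\mathbb{R},\mathbb{C}^n)$ is obtained from variation of parameters on $[0,T]$; a direct Green's function estimate yields $\|R_\mu\|_{\mathcal{L}(C_T)} = O(1/|\mu|)$ as $\mu \to -\infty$ along the real axis. The factorization
\[
    \mu I - \Delta(z) = (\mu I - D)\bigl(I - R_\mu M(z)\bigr), \qquad \mu \in \rho(D),
\]
then reduces invertibility of $\mu I - \Delta(z)$ to that of $I - R_\mu M(z)$ on $C_T$. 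Picking $\mu \in \mathbb{R}$ with $|\mu|$ large enough that $\|R_\mu M(z)\| < 1$ makes the latter invertible via a Neumann series, so $\mu I - \Delta(z)$ is a bijection $C_T^1 \to C_T$, and $\mu \in \rho(\Delta(z))$ follows from the closed graph theorem.

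For the triple equality in \eqref{eq:not inv is floquet} the identification of characteristic values of $\Delta$ with Floquet exponents is built in: as in \eqref{eq:motivationcharac}, a Floquet eigenfunction of exponent $\sigma$ has the form $v(\tau) = e^{\sigma \tau} q(\tau)$ with $q \in C_T^1(\mathbb{R},\mathbb{C}^n) \setminus \{0\}$, and $\dot v = L(\tau) v_\tau$ reduces to $\Delta(\sigma) q = 0$. Since ``$\Delta(\sigma)$ has nontrivial kernel'' trivially implies ``$\Delta(\sigma)$ is not invertible,'' the remaining task is to show that injective $\Delta(\sigma)$ is also surjective. Given $r \in C_T(\mathbb{R},\mathbb{C}^n)$, the substitution $v(\tau) \coloneqq e^{\sigma \tau} q(\tau)$, $R(\tau) \coloneqq e^{\sigma \tau} r(\tau)$ turns $\Delta(\sigma) q = r$ into the inhomogeneous linear DDE
\[
    \dot v(\tau) = L(\tau) v_\tau + R(\tau), \qquad \tau \in \mathbb{R},
\]
with the quasi-periodic boundary condition $v(\tau + T) = e^{\sigma T}\, v(\tau)$. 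Because $\sigma$ is not a Floquet exponent, $e^{\sigma T}$ lies in the resolvent set of the monodromy operator $U(s+T,s)$, which produces a unique such $v$ and hence a unique preimage $q = e^{-\sigma\,\cdot}\, v \in C_T^1(\mathbb{R},\mathbb{C}^n)$.

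The main obstacle is this final reduction: passing from invertibility of $e^{\sigma T} I - U(s+T, s)$ on $X$ to existence and uniqueness of a quasi-periodic solution of the inhomogeneous DDE within the sun-star framework. Here I would invoke the sun-star variation-of-constants formula for $U$ recalled in \cref{sec:periodic center manifolds}, rewriting the ``return map'' from $v_s$ to $v_{s+T}$ as an affine map $\varphi \mapsto U(s+T,s)\varphi + \Psi R$ with a bounded linear $\Psi$, so that quasi-periodicity is equivalent to solving the linear equation $(e^{\sigma T} I - U(s+T,s))\varphi = \Psi R$ on $X$; this is uniquely solvable precisely when $e^{\sigma T} \notin \sigma(U(s+T,s))$, i.e., when $\sigma$ fails to be a Floquet exponent.
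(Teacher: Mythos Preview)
Your argument for closedness and for $\rho(\Delta(z)) \neq \emptyset$ is essentially identical to the paper's: same splitting $\Delta(z) = D + M(z)$ with $D$ closed and $M(z)$ bounded, same explicit resolvent of $D$, and the same Neumann-series trick (the paper writes the factorization on the other side, $[I - M(z)R_\mu](\mu I - D)$, but this is immaterial).

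Where you diverge is in the proof of \eqref{eq:not inv is floquet}. The paper never touches the monodromy operator or the variation-of-constants formula here; instead it observes that the embedding $C_T^1 \hookrightarrow C_T$ is compact by Arzel\`a--Ascoli, so $D$ has compact resolvent, and hence so does its bounded perturbation $\Delta(z)$ (using the already established $\rho(\Delta(z)) \neq \emptyset$). Consequently $\sigma(\Delta(\sigma)) = \sigma_p(\Delta(\sigma))$, and non-invertibility of $\Delta(\sigma)$ forces $0$ to be an eigenvalue, i.e.\ $\sigma$ is a characteristic value. This is shorter and entirely internal to the operator theory on $C_T(\mathbb{R},\mathbb{C}^n)$. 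Your route, pulling the problem back to an inhomogeneous DDE with quasi-periodic boundary condition and invoking $e^{\sigma T} \in \rho(U(s+T,s))$, is correct and more constructive: it actually produces the inverse $\Delta(\sigma)^{-1}$ via the sun-star variation-of-constants formula, at the cost of importing that machinery and checking the $C^1$-regularity of the resulting quasi-periodic $v$ across the matching point $s$ (which you should make explicit, though it follows from $T$-periodicity of $L$ and quasi-periodicity of $R$). The paper's compact-resolvent argument buys brevity and independence from the evolutionary system; yours buys an explicit inverse and a direct link to the Floquet picture.
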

\begin{proof}
Clearly, $\Delta(z) = D + B(z)$, where $D : \mathcal{D}(D) \subseteq C_T(\mathbb{R},\mathbb{C}^n) \to C_T(\mathbb{R},\mathbb{C}^n)$ with domain $\mathcal{D}(D) \coloneqq C_T^1(\mathbb{R},\mathbb{C}^n)$ is defined by $Dq \coloneqq \dot{q}$, and $B(z) \in \mathcal{L}( C_T(\mathbb{R},\mathbb{C}^n))$ is defined by $(B(z)q)(\tau) \coloneqq zq(\tau) - L(\tau)[\theta \mapsto e^{-z \theta}q(\tau-\theta)]$. Clearly, $D$ is closed and the fact that $B(z)$ is bounded follows from $\|B(z)q\|_{\infty} \leq ( |z| + \mbox{TV}_\zeta \max\{1,e^{-h \Re (z)} \}) \| q \|_{\infty}$. This inequality follows from a result on Riemann-Stieltjes integrals, see for example \cite[page 14]{Diekmann1995}. Here, $\mbox{TV}_\zeta \coloneqq \sup_{t \in \mathbb{R}} \mbox{TV}(\zeta(t,\cdot))$, where $\mbox{TV}(\zeta(t,\cdot))$ denotes the total variation of $\zeta(t,\cdot)$. Note that $\mbox{TV}_\zeta$ is finite since $\theta \mapsto \zeta(\cdot,\theta)$ is bounded, and $t \mapsto \zeta(t,\cdot)$ is continuous and $T$-periodic. As $\Delta(z)$ is the sum of the closed operator $D$ and bounded linear operator $B(z)$, it follows from \cite[Lemma 2.4]{Engel2000} that $\Delta(z)$ is closed.

Let us now prove the second claim. The resolvent of $D$ at $\mu$ is given by
\begin{equation*}
    ((\mu I - D)^{-1}q)(\tau) = \frac{e^{\mu \tau}}{1-e^{-\mu T}} \bigg( \int_\tau^T e^{-\mu s}q(s) ds + \int_0^\tau e^{-\mu (s+T)}q(s) ds\bigg), \quad \forall q \in C_T(\mathbb{R},\mathbb{C}^n),
\end{equation*}
whenever $\mu \in \rho(D) = \mathbb{C} \setminus \hspace{-2pt} \frac{2 \pi i}{T} \mathbb{Z}$, and thus $\|(\mu I - D)^{-1}\| \leq 1/|\mu| \to 0$ as $\mu \to \infty$ for $\mu \in \mathbb{R}$. Hence, for every fixed $z \in \mathbb{C}$, there exists a $\mu_z \in \rho(D)$ such that $\|(\mu_z I - D)^{-1}\| < \|B(z)\|^{-1}$. Thus $I - B(z)(\mu_z I -D)^{-1}$ is an invertible operator in $\mathcal{L}(C_T(\mathbb{R},\mathbb{C}^n))$ by the Neumann series. Since $\mu_z \in \rho(D)$, $[I - B(z)(\mu_z I -D)^{-1}](\mu_z I - D) = \mu_z I - \Delta(z)$ is invertible and thus $\mu_z \in \rho(\Delta(z))$, i.e. $\rho(\Delta(z)) \neq \emptyset$.

Due to the Arzel\'a-Ascoli theorem, the canonical injection $(C_T^1(\mathbb{R},\mathbb{C}^n),\|\cdot\|_{C^1}) \hookrightarrow C_T(\mathbb{R},\mathbb{C}^n)$ is compact, and so $D$ has compact resolvent by \cite[Proposition II.4.25]{Engel2000}. Since $\rho(\Delta(z)) \neq \emptyset$ and $B(z) \in \mathcal{L}(C_T(\mathbb{R},\mathbb{C}^n))$, it follows from \cite[Exercise II.4.30]{Engel2000} that $\Delta(z) = D + B(z)$ has compact resolvent. Let $\sigma \in \mathbb{C}$ such that $\Delta(\sigma)$ is not invertible, then $0 \in \sigma(\Delta(\sigma)) = \sigma_p(\Delta(\sigma))$ due to \cite[Corollary IV.1.19]{Engel2000}, which proves that $\Delta(\sigma)$ is not injective meaning that $\sigma$ is a characteristic value of $\Delta$. But recall from the discussion above \eqref{eq:motivationcharac} that characteristic values of $\Delta$ coincide with Floquet exponents.
\end{proof}

If $w \in C_T(\mathbb{R},\mathbb{C}^n)$ is given, the equation $\Delta(z)u = w$ is a periodic linear (inhomogeneous) DDE where $u \in C_T^1(\mathbb{R},\mathbb{C}^n)$ is the unknown. If $u$ is a unique solution of this equation, we denote $u$ by $\Delta(z)^{-1} w$. 
To derive the explicit formulas for the (generalized) eigenfunctions, we proceed from \cite[Section 3.1]{Article2}. Therefore, let us introduce the linear operator $\mathcal{A} : \mathcal{D}(\mathcal{A}) \subseteq C_T(\mathbb{R},X) \to C_T(\mathbb{R},X)$ by
\begin{equation*}
    \mathcal{D}(\mathcal{A}) \coloneqq \{ \varphi \in C_T^1(\mathbb{R},X) : \varphi(\tau) \in \mathcal{D}(A(\tau)) \mbox{ for all $\tau \in \mathbb{R}$} \}, \quad 
    (\mathcal{A}\varphi)(\tau) \coloneqq A(\tau)\varphi(\tau) - \dot{\varphi}(\tau),
\end{equation*}
and recall from \cite[Proposition 7]{Article1} that $\mathcal{A}$ is not closed but closable. Due to the density of $C_T^1(\mathbb{R},X)$ in $C_T(\mathbb{R},X)$ and $\mathcal{D}(A(\tau))$ in $X$, \cite[Lemma 17]{Article1} tells us that $\mathcal{A}$ is densely defined.

Let $\lambda$ be a Floquet multiplier. Notice that these operators can also be introduced on a space of $T$-antiperiodic functions. For the sake of simplicity, we will work from now on with $T$-periodic maps ($\lambda \in \mathbb{C} \setminus \mathbb{R}_{-}$) and remark at the end of this section on the $T$-antiperiodic setting ($\lambda \in \mathbb{R}_{-}$), see \cref{remark:Tantiperiodic}. The following result provides us the explicit computational formulas for the (generalized) eigenfunctions when $\lambda \in \mathbb{C} \setminus \mathbb{R}_{-}$. The autonomous analogue of the following result can be found in \cite[Theorem IV.5.5]{Diekmann1995}.

\begin{theorem} \label{thm:eigenfunctions}
Let $\lambda \in \mathbb{C} \setminus \mathbb{R}_{-}$ be a Floquet multiplier of algebraic multiplicity $m_\lambda$ with $\sigma$ its associated Floquet exponent. Then there exist $\varphi_i \in C_T^{k+1}(\mathbb{R},X)$ satisfying
\begin{equation} \label{eq:ODEeig2}
    ( \mathcal{A} - \sigma I)\varphi_i=
    \begin{cases}
    0, \quad &i = 0,\\
    \varphi_{i-1}, \quad &i=1,\dots,m_\lambda-1,
    \end{cases}
\end{equation}
such that the set of functions $\{ \varphi_0(\tau),\dots,\varphi_{m_\lambda - 1}(\tau) \}$ is an ordered basis of $E_\lambda(\tau)$.  If the functions $q_i \in C_T^{k+1}(\mathbb{R},\mathbb{C}^n)$ satisfy the periodic linear DDEs
\begin{equation} \label{eq:q_i}
    \sum_{l=0}^i \frac{1}{l!}\Delta^{(l)}(\sigma)  q_{i-l} = 0,
\end{equation}
then the (generalized) eigenfunctions $\varphi_i$ are given by
\begin{equation} \label{eq:varphi_i}
    \varphi_i(\tau)(\theta) = e^{\sigma \theta} \sum_{l=0}^{i} \frac{\theta^l}{l!} q_{i-l}(\tau + \theta).
\end{equation}
\end{theorem}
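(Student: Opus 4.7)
The plan is to reduce the construction of periodic Jordan chains for $\mathcal{A}$ at $\sigma$ to the construction of periodic Jordan chains for $\Delta$ at $\sigma$ via the explicit correspondence \eqref{eq:varphi_i}. Existence of $\varphi_i \in C_T^{k+1}(\mathbb{R},X)$ satisfying \eqref{eq:ODEeig2} such that $\{\varphi_0(\tau),\dots,\varphi_{m_\lambda-1}(\tau)\}$ is an ordered basis of $E_\lambda(\tau)$ has already been established in \cite{Article2}; the content of the present statement is therefore the explicit representation of these $\varphi_i$ in terms of scalar-valued periodic data.

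Starting from any $q_i \in C_T^{k+1}(\mathbb{R},\mathbb{C}^n)$, I would first verify that the ansatz \eqref{eq:varphi_i} automatically satisfies the interior relation
\begin{equation*}
\partial_\theta \varphi_i(\tau)(\theta) - \sigma \varphi_i(\tau)(\theta) - \partial_\tau \varphi_i(\tau)(\theta) = \varphi_{i-1}(\tau)(\theta), \quad \theta \in [-h,0],
\end{equation*}
(with the convention $\varphi_{-1} := 0$) by differentiating the polynomial factor in $\theta$ and applying the Leibniz rule. The key observation is that each $q_{i-l}(\tau+\theta)$ depends on $(\tau,\theta)$ only through $\tau+\theta$, so the $\partial_\theta$- and $\partial_\tau$-contributions from the $q$-derivatives cancel and the identity reduces to the elementary index shift $\partial_\theta[\theta^l/l!] = \theta^{l-1}/(l-1)!$. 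This is the pointwise form of $(\mathcal{A}-\sigma I)\varphi_i = \varphi_{i-1}$ on the interior of $[-h,0]$. To complete \eqref{eq:ODEeig2} I further need $\varphi_i(\tau) \in \mathcal{D}(A(\tau))$, i.e.\ $\partial_\theta\varphi_i(\tau)(0) = L(\tau)\varphi_i(\tau)$. Evaluating the left-hand side at $\theta=0$ gives $\sigma q_i(\tau) + q_i'(\tau) + q_{i-1}(\tau)$, while the right-hand side unfolds via \eqref{eq:L(t)varphi} into a sum of Riemann-Stieltjes integrals weighted by $\tfrac{(-\theta)^l}{l!}e^{-\sigma\theta}$.

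To match this boundary identity with \eqref{eq:q_i}, I would differentiate \eqref{eq:Delta(z)q} repeatedly in $z$ under the Riemann-Stieltjes integral, obtaining for $l \geq 1$
\begin{equation*}
(\Delta^{(l)}(z)q)(\tau) = \delta_{l,1}\,q(\tau) + (-1)^{l+1}\int_0^h d_2\zeta(\tau,\theta)\,\theta^l e^{-z\theta}q(\tau-\theta).
\end{equation*}
Substitution then shows that $\sum_{l=0}^i \tfrac{1}{l!}\Delta^{(l)}(\sigma)q_{i-l} = 0$ reproduces the boundary identity term by term, so the ansatz \eqref{eq:varphi_i} satisfies \eqref{eq:ODEeig2} if and only if $\{q_i\}$ solves \eqref{eq:q_i}. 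The main technical point is justifying the interchange of the $z$-derivative and the Riemann-Stieltjes integral, which follows from the bound $|\int_0^h d_2\zeta(\tau,\theta)\psi(\theta)| \leq \mathrm{TV}_\zeta \|\psi\|_\infty$ already used in \cref{lemma:Delta(z)closed}, applied uniformly to the smooth integrand $\theta \mapsto \theta^l e^{-z\theta}q(\tau-\theta)$. For the converse direction, given the basis $\{\varphi_i\}$ from \cite{Article2}, setting $q_i(\tau) := \varphi_i(\tau)(0)$ and applying the change of variables $\tilde w_i(\eta,\theta) := e^{-\sigma\theta}\varphi_i(\eta-\theta)(\theta)$ reduces the interior PDE to the triangular system $\partial_\theta \tilde w_i = \tilde w_{i-1}$, which integrates by induction in $i$ to the representation \eqref{eq:varphi_i}; the regularity $q_i \in C_T^{k+1}(\mathbb{R},\mathbb{C}^n)$ and the basis property at each $\tau$ are inherited from those of $\{\varphi_i\}$.
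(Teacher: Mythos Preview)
Your proposal is correct and follows essentially the same route as the paper's proof: both rely on the interior first-order PDE $(\partial_\theta - \partial_\tau - \sigma)\varphi_i = \varphi_{i-1}$ encoded in the action of $\mathcal{A}$, together with the boundary condition $\partial_\theta\varphi_i(\tau)(0) = L(\tau)\varphi_i(\tau)$ coming from $\mathcal{D}(A(\tau))$, and both reduce the latter to \eqref{eq:q_i} via the explicit $z$-derivatives of $\Delta$. The only cosmetic difference is that the paper derives \eqref{eq:varphi_i} by solving the interior PDE via characteristics and iterating the resulting recurrence, whereas you plug in the ansatz \eqref{eq:varphi_i} and verify it directly; your added converse paragraph (defining $q_i(\tau):=\varphi_i(\tau)(0)$ and integrating the triangular system for $\tilde w_i$) is precisely that characteristics computation run in reverse.
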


\begin{proof}
The claim on the existence and $C^{k+1}$-smoothness of the $T$-periodic (generalized) eigenfunctions $\varphi_i$ satisfying \eqref{eq:ODEeig2} is proven in \cite[Theorem 5]{Article2}. Let us now prove the explicit formulas \eqref{eq:q_i} and \eqref{eq:varphi_i}. Recall from the action of $\mathcal{A}$ that $\varphi_i$ must satisfy
\begin{equation*}
    \bigg(\frac{\partial}{\partial \theta} - \frac{\partial}{\partial \tau} - \sigma \bigg) \varphi_{i}(\tau)(\theta) = 
    \begin{dcases}
    0, \quad &i = 0, \\
    \varphi_{i-1}(\tau)(\theta), \quad &i=1,\dots,m_\lambda-1.
    \end{dcases}
\end{equation*}
This is a first-order linear (inhomogeneous) PDE that has the solution
\begin{equation} \label{eq:varphi_i_PDE}
    \varphi_i(\tau)(\theta) = e^{\sigma \theta}q_{i}(\tau+\theta) + \int_0^{\theta}e^{\sigma(\theta- s)} \varphi_{i-1}(\tau + \theta -s)(s) ds, \quad \forall \tau \in \mathbb{R}, \ \theta \in [-h,0],
\end{equation}
whenever an initial condition $q_i \in C_T^{k+1}(\mathbb{R},\mathbb{C}^n)$ for $i=0,\dots,m_\lambda-1$ is specified. Here we set $\varphi_{-1}$ to be the zero function for simplicity. Using the obtained recurrence relation, it is then straightforward to check that $\varphi_i$ is given as in \eqref{eq:varphi_i}. To find the functions $q_i$, recall that $\varphi_i(\tau) \in \mathcal{D}(A(\tau))$ for all $\tau \in \mathbb{R}$. The domain statement from \eqref{eq:D(As)} shows us that $\varphi_i$ must satisfy
\begin{equation} \label{eq:varphi_i_PDE_DF}
   \frac{\partial}{\partial \theta}\varphi_{i}(\tau)(\theta) \bigg |_{\theta = 0} = L(\tau)\varphi_{i}(\tau) = \int_0^h d_2\zeta(\tau,\theta)\varphi_{i}(\tau)(-\theta), \quad \forall \tau \in \mathbb{R}.
\end{equation}
The recurrence relation in \eqref{eq:varphi_i_PDE} show that $\frac{\partial}{\partial \theta}\varphi_{i}(\tau)(\theta) |_{\theta = 0} = \sigma q_i(\tau) + \dot{q}_i(\tau) + q_{i-1}(\tau)$, for $i=0,\dots,m_\lambda-1$ when we define $q_{-1}$ to be the zero function. Hence, \eqref{eq:varphi_i_PDE_DF} implies that $\dot{q}_{i}(\tau) = -\sigma q_i(\tau) - q_{i-1}(\tau) + \int_0^h d_2 \zeta(\tau,\theta) \varphi_{i}(\tau)(-\theta)$. Using \eqref{eq:varphi_i} again, the result as stated in \eqref{eq:q_i} follows.
\end{proof}

Our next goal is to obtain an explicit formula for the $T$-periodic adjoint (generalized) eigenfunctions in terms of the characteristic operator. In the autonomous setting, recall from \cite{Diekmann1995,Bosschaert2020,Janssens2010} that the adjoint (transpose) of the characteristic matrix $\hat{\Delta}(z) \in \mathbb{C}^{n \times n}$ is needed to derive the adjoint eigenfunctions. Moreover, the adjoint $\hat{\Delta}^\star(z) \coloneqq [\hat{\Delta}(z)]^\star$ is the unique linear map satisfying $\langle \hat{\Delta}^\star(z)p,q \rangle = \langle p, \hat{\Delta}(z)q \rangle$ for all $(p,q) \in \mathbb{C}^{n \star} \times \mathbb{C}^{n}$ due to the nondegeneracy of $\langle \cdot , \cdot \rangle$. Using this analogy, it is clear for our periodic setting that we need a unique adjoint of $\Delta(z)$ in terms of a specific nondegenerate bilinear map. To do this, let us introduce the \emph{pairing} $\langle \cdot, \cdot \rangle_T : C_T(\mathbb{R},\mathbb{C}^{n \star}) \times C_T(\mathbb{R},\mathbb{C}^n) \to \mathbb{C}$ by
\begin{equation} \label{eq: pairingTCn}
    \langle p,q \rangle_T \coloneqq \int_0^T  p(\tau)q(\tau) d \tau, \quad \forall (p,q) \in C_T(\mathbb{R},\mathbb{C}^{n \star}) \times C_T(\mathbb{R},\mathbb{C}^n),
\end{equation}
and note that $\langle \cdot, \cdot \rangle_T$ is not the natural dual pairing between $C_T(\mathbb{R},\mathbb{C}^{n \star})$ and $C_T(\mathbb{R},\mathbb{C}^n)$ as simply $C_T(\mathbb{R},\mathbb{C}^n)^\star \neq C_T(\mathbb{R},\mathbb{C}^{n \star})$. However, we can still consider the \emph{dual pair} $(C_T(\mathbb{R},\mathbb{C}^n),C_T(\mathbb{R},\mathbb{C}^{n \star}))$ with respect to the bilinear map $\langle \cdot , \cdot \rangle_T$, see \cite[Chapter 8]{Narici2010} for more information on duality theory. To guarantee uniqueness of the adjoint of $\Delta(z)$ with respect to $\langle \cdot , \cdot \rangle_T$, we need the following.

\begin{lemma} \label{lemma:nondegenerate}
The bilinear map $\langle \cdot, \cdot \rangle_T$ from \eqref{eq: pairingTCn} is nondegenerate.
\end{lemma}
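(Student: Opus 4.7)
The plan is to verify nondegeneracy in each argument by producing, for any nonzero element of one factor, a concrete continuous $T$-periodic test element of the other factor against which the pairing is nonzero. The pairing is bilinear (not sesquilinear), so one cannot simply integrate a squared modulus; instead the conjugation must be absorbed into the test function.

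Concretely, suppose $p \in C_T(\mathbb{R}, \mathbb{C}^{n \star})$ is not identically zero. Define $q \in C_T(\mathbb{R}, \mathbb{C}^n)$ by taking the entrywise complex conjugate and transposing to a column:
\begin{equation*}
    q(\tau) \coloneqq \bigl(\overline{p_1(\tau)}, \ldots, \overline{p_n(\tau)}\bigr)^{T}, \quad \tau \in \mathbb{R}.
\end{equation*}
Since $p$ is continuous and $T$-periodic, so is $q$. Then
\begin{equation*}
    p(\tau) q(\tau) = \sum_{i=1}^n p_i(\tau)\, \overline{p_i(\tau)} = \|p(\tau)\|^{2},
\end{equation*}
so $\langle p, q \rangle_T = \int_0^T \|p(\tau)\|^{2}\, d\tau$. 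Because $p$ is continuous and nonzero, there exists a $\tau_0 \in [0,T]$ with $\|p(\tau_0)\| > 0$, and by continuity $\|p\|$ stays bounded below by some positive constant on a neighborhood of $\tau_0$, so the integral is strictly positive. Hence $\langle p, \cdot \rangle_T \not\equiv 0$ on $C_T(\mathbb{R}, \mathbb{C}^n)$, proving nondegeneracy in the first argument.

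The reverse direction is entirely symmetric: given a nonzero $q \in C_T(\mathbb{R}, \mathbb{C}^n)$, set $p(\tau) \coloneqq \bigl(\overline{q_1(\tau)}, \ldots, \overline{q_n(\tau)}\bigr) \in \mathbb{C}^{n \star}$, which lies in $C_T(\mathbb{R}, \mathbb{C}^{n \star})$ and yields $\langle p, q \rangle_T = \int_0^T \|q(\tau)\|^{2}\, d\tau > 0$ by the same continuity argument. There is no genuine obstacle in this proof; the only conceptual point worth flagging is the mismatch between the bilinear pairing and the natural sesquilinear inner product on complex-valued functions, which is handled simply by feeding conjugates into the pairing as above.
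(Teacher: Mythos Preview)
Your proof is correct and follows essentially the same approach as the paper: both arguments pair a nonzero element with its Hermitian adjoint (entrywise conjugate, transposed) so that the bilinear pairing reduces to $\int_0^T \|\cdot\|^2\,d\tau$, which is strictly positive by continuity. The paper phrases this as injectivity of $q \mapsto \langle \cdot, q\rangle_T$ and $p \mapsto \langle p, \cdot\rangle_T$, while you state the contrapositive, but the content is identical.
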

\begin{proof}
We show that the linear maps $q \mapsto \langle \cdot,q \rangle_T$ and $p \mapsto \langle p, \cdot \rangle_T$ are injective. Suppose that $\langle \cdot,q \rangle_T = 0$ for some $q \in C_T(\mathbb{R},\mathbb{C}^n)$. Let $p_0 \in C_T(\mathbb{R},\mathbb{C}^{n \star})$ be such that $p_0(\tau)$ is the Hermitian adjoint of $q(\tau)$ for all $\tau \in \mathbb{R}$. Then $\langle p_0,q \rangle_T = \int_0^T |q(\tau)|^2 d\tau = 0$ and so $q=0$ by continuity. The proof on the injectivity of $p \mapsto \langle p, \cdot \rangle_T$ is similar and therefore omitted.
\end{proof}

The following result shows that we can define a unique adjoint $\Delta^\star(z) \coloneqq \Delta(z)^\star$ of $\Delta(z)$ in terms of the pairing \eqref{eq: pairingTCn}. This adjoint inherits the properties of $\Delta(z)$ from \cref{lemma:Delta(z)closed}.

\begin{lemma} \label{lemma:adjoint charac}
For any $z \in \mathbb{{C}}$, the operator $\Delta^\star(z) : \mathcal{D}(\Delta^\star(z)) \coloneqq C_T^1(\mathbb{R},\mathbb{C}^{n \star}) \subseteq C_T(\mathbb{R},\mathbb{C}^{n \star}) \to C_T(\mathbb{{R}},\mathbb{C}^{n \star})$ defined by
\begin{equation} \label{eq:adjoint charac operator}
    (\Delta^\star(z)p)(\tau) \coloneqq -\dot{p}(\tau) + zp(\tau) - \int_0^h   p(\tau + \theta) e^{-z \theta} d_2 \zeta(\tau + \theta,\theta), \quad \forall p \in C_T^1(\mathbb{R},\mathbb{C}^{n \star}), 
\end{equation}
is the unique linear operator satisfying
\begin{equation} \label{eq:pairing charac}
    \langle \Delta^\star(z)p,q \rangle_T = \langle p,\Delta(z)q \rangle_T, \quad \forall (p,q) \in C_T^1(\mathbb{R},\mathbb{C}^{n \star}) \times C_T^1(\mathbb{R},\mathbb{C}^n).
\end{equation}
Moreover, $\Delta^\star(z)$ is closed and $\rho(\Delta^\star(z)) \neq \emptyset$ for all $z \in \mathbb{C}$, and there holds
\begin{equation*} 
    \{ \sigma \in \mathbb{C} : \Delta^\star(\sigma) \mbox{ is not invertible} \} = \{ \sigma \in \mathbb{C} : \sigma \mbox{ is a characteristic value of } \Delta^\star \}. 
\end{equation*}
\end{lemma}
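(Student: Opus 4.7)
The plan is to prove the four claims in order: the pairing identity together with uniqueness, closedness of $\Delta^\star(z)$, nonemptiness of its resolvent set, and finally the non-invertibility/characteristic-value equivalence. The first step is the only genuinely new computation; the remaining three mirror \cref{lemma:Delta(z)closed} almost verbatim.

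For \eqref{eq:pairing charac}, I would start from $\langle p, \Delta(z)q\rangle_T$ with $p \in C_T^1(\mathbb{R},\mathbb{C}^{n\star})$ and $q \in C_T^1(\mathbb{R},\mathbb{C}^n)$, substitute the defining formula \eqref{eq:Delta(z)q}, and treat the three resulting terms separately. The $zp(\tau)q(\tau)$ term is already symmetric. Integration by parts converts $\int_0^T p(\tau)\dot q(\tau) d\tau$ into $-\int_0^T \dot p(\tau) q(\tau) d\tau$, with boundary contributions vanishing by $T$-periodicity. The crucial term is the double integral $\int_0^T p(\tau) \int_0^h e^{-z\theta} q(\tau-\theta) d_2\zeta(\tau,\theta) d\tau$. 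I would approximate the inner Stieltjes integral by Riemann-Stieltjes sums over partitions of $[0,h]$, apply the change of variables $\sigma = \tau - \theta_j$ in each resulting $\tau$-integral (using $T$-periodicity of $p$, $q$, and $\tau \mapsto \zeta(\tau,\cdot)$ to keep the interval at $[0,T]$), interchange the finite sum with the $\sigma$-integral, and then pass to the limit to obtain $\int_0^T q(\sigma) \int_0^h p(\sigma+\theta) e^{-z\theta} d_2\zeta(\sigma+\theta,\theta) d\sigma$. Collecting the three terms reproduces exactly $\langle \Delta^\star(z)p,q\rangle_T$ with $\Delta^\star(z)$ given by \eqref{eq:adjoint charac operator}. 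Uniqueness is then immediate from \cref{lemma:nondegenerate}: if a second candidate $\widetilde\Delta$ satisfied the same identity then $\langle (\Delta^\star(z) - \widetilde\Delta)p, q\rangle_T = 0$ for all $q$ in a dense set, forcing $\Delta^\star(z) = \widetilde\Delta$ on $C_T^1(\mathbb{R},\mathbb{C}^{n\star})$.

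Closedness and $\rho(\Delta^\star(z)) \neq \emptyset$ follow by transcribing the proof of \cref{lemma:Delta(z)closed}. Decompose $\Delta^\star(z) = -D + B^\star(z)$, where $D$ denotes differentiation with domain $C_T^1(\mathbb{R},\mathbb{C}^{n\star})$ and $B^\star(z) \in \mathcal{L}(C_T(\mathbb{R},\mathbb{C}^{n\star}))$ collects the remaining two terms of \eqref{eq:adjoint charac operator}; the same uniform bound $\mathrm{TV}_\zeta < \infty$ used for $B(z)$ bounds $B^\star(z)$, once one interprets the Stieltjes integrator as the $\theta$-function $\theta \mapsto \zeta(\tau+\theta,\theta)$ whose variation is controlled by the joint regularity of $\zeta$. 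Hence $\Delta^\star(z)$ is closed by \cite[Lemma 2.4]{Engel2000}. The resolvent of $-D$ is explicit (the same formula as for $D$ up to a sign of $\mu$) with norm $O(1/|\mu|)$ along the real axis, so a Neumann series argument identical to the one already used produces some $\mu_z \in \rho(\Delta^\star(z))$.

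For the final equivalence, Arzelà-Ascoli gives the compact embedding $C_T^1(\mathbb{R},\mathbb{C}^{n\star}) \hookrightarrow C_T(\mathbb{R},\mathbb{C}^{n\star})$, so $-D$ has compact resolvent by \cite[Proposition II.4.25]{Engel2000}, and combining with boundedness of $B^\star(z)$ lifts this to $\Delta^\star(z)$ through \cite[Exercise II.4.30]{Engel2000}. Then \cite[Corollary IV.1.19]{Engel2000} forces $\sigma(\Delta^\star(\sigma)) = \sigma_p(\Delta^\star(\sigma))$, so non-invertibility of $\Delta^\star(\sigma)$ is equivalent to the existence of a nonzero $p \in C_T^1(\mathbb{R},\mathbb{C}^{n\star})$ with $\Delta^\star(\sigma)p = 0$, i.e., to $\sigma$ being a characteristic value of $\Delta^\star$. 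The main obstacle lies in the first paragraph: rigorously justifying the passage to the limit in the Riemann-Stieltjes/change-of-variables step, which rests on the joint regularity of $\zeta$ recorded in \cref{sec:periodic center manifolds} (continuity and $T$-periodicity in $\tau$, bounded variation and right-continuity in $\theta$). Once this is in place, the remaining pieces are entirely parallel to the $\Delta(z)$ case.
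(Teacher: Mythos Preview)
Your proposal is correct and follows essentially the same route as the paper: the pairing identity is obtained exactly as you describe (integration by parts on the derivative term, Riemann--Stieltjes partial sums with the shift $\tau \mapsto \tau - c_i$ and $T$-periodicity on the kernel term), uniqueness is deduced from \cref{lemma:nondegenerate} via a density argument, and the paper then simply says ``the remaining properties of $\Delta^\star(z)$ can be proven as performed in \cref{lemma:Delta(z)closed}'', which is precisely the decomposition $-D + B^\star(z)$, Neumann-series, and compact-resolvent argument you spell out.
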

\begin{proof}
Let $z,p$ and $q$ be given as mentioned. It follows immediately that
\begin{equation*}
    \langle p, \Delta(z)q \rangle_T = \int_0^T  p(\tau) \dot{q}(\tau) d\tau +  \int_0^T  zp(\tau)q(\tau) d\tau - \int_0^T p(\tau) \int_0^h d_2 \zeta(\tau,\theta) e^{-z\theta}q(\tau-\theta) d\tau.
\end{equation*}
Applying integration by parts on the first term of the right-hand side yields $-\langle \dot{p},q \rangle_T$, due to the $T$-periodicity of $p$ and $q$. The second term equals $\langle zp,q \rangle_T$, while the third term requires a bit more work. Let $P_m = \{0 = \theta_0 < \theta_1 < \dots < \theta_m = h \}$ be a partition of $[0,h]$ and choose an arbitrary $c_i \in [\theta_{i},\theta_{i+1}]$ for all $i = 0,\dots,m-1$. Consider  $\int_0^T  p(\tau) \sum_{i=0}^{m-1} [ \zeta(\tau,\theta_{i+1}) - \zeta(\tau,\theta_i)] e^{-z c_i}q(\tau - c_i)  d\tau$. If we substitute $ \tau - c_i \to \tau$, the integral bounds become $-c_i$ and $T - c_i$. Because $p,q$ and $\tau \mapsto \zeta(\tau,\cdot)$ are $T$-periodic, the integrand is $T$-periodic, and so we can shift the bounds of integration back to $0$ and $T$. Hence,
\begin{align*}
&\int_0^T p(\tau) \sum_{i=0}^{m-1} [ \zeta(\tau,\theta_{i+1}) - \zeta(\tau,\theta_i)] e^{-z c_i}q(\tau - c_i)  d\tau \\
&= \int_0^T \sum_{i=0}^{m-1} p(\tau+c_i) e^{-z c_i}[ \zeta(\tau + c_i,\theta_{i+1}) - \zeta(\tau + c_i,\theta_i)] q(\tau) d\tau.  
\end{align*}
If we let $m \to \infty$ while $\max_{i=0,\dots,m-1} |\theta_{i+1} - \theta_i| \to 0$, then the summation on the left-hand side approaches $\int_0^h d_2 \zeta(\tau,\theta) e^{-z\theta}q(\tau-\theta)$ by definition of the Riemann-Stieltjes integral. Hence,
\begin{equation*}
\int_0^T p(\tau) \int_0^h d_2 \zeta(\tau,\theta) e^{-z\theta}q(\tau-\theta) d\tau = \int_0^T \int_0^h p(\tau + \theta) e^{-z\theta}  d_2\zeta(\tau+\theta,\theta)q(\tau) d\tau,
\end{equation*}
and so $\langle p,\Delta(z)q \rangle_T = \langle \Delta^\star(z)p, q \rangle_T$, which shows that $\Delta^\star(z)$ from \eqref{eq:adjoint charac operator} satisfies \eqref{eq:pairing charac}. 

It remains to show that $\Delta^{\star}(z)$ is the unique linear operator satisfying \eqref{eq:pairing charac}. Let $Q(z) : C_T^1(\mathbb{R},\mathbb{C}^{n \star}) \to C_T(\mathbb{R},\mathbb{C}^{n \star})$ be a linear operator satisfying $\langle (Q(z) - \Delta^\star(z))p,q \rangle_T = 0$, for all $z \in \mathbb{C}$ and $ (p,q) \in C_T^1(\mathbb{R},\mathbb{C}^{n \star}) \times  C_T^1(\mathbb{R},\mathbb{C}^n)$. Let $q \in C_T(\mathbb{R},\mathbb{C}^n)$ be given and let $(q_m)_m$ be a sequence in $C_T^1(\mathbb{R},\mathbb{C}^n)$ such that $q_m \to q$ in norm as $m \to \infty$. As $\langle (Q(z) - \Delta^\star(z))p,q_m \rangle_T = 0$ for all $m \in \mathbb{N}$, we obtain $|\langle (Q(z) - \Delta^\star(z))p,q \rangle_T| \leq T \| (Q(z) - \Delta^\star(z))p \|_\infty \| q_m - q \|_\infty \to 0$ as $m \to \infty$ and thus $\langle (Q(z) - \Delta^\star(z))p,q \rangle_T = 0$ for all $q \in C_T(\mathbb{R},\mathbb{C}^n)$. But due to \cref{lemma:nondegenerate}, the map $\langle \cdot , \cdot \rangle_T$ is nondegenerate and so $Q(z) = \Delta^\star(z)$ on $C_T^1(\mathbb{R},\mathbb{C}^{n \star})$, i.e. $\Delta^\star(z)$ is uniquely determined. The remaining properties of $\Delta^\star(z)$ can be proven as performed in \cref{lemma:Delta(z)closed}.
\end{proof}

If $w \in C_T(\mathbb{R},\mathbb{C}^{n \star})$ is given, the equation $\Delta^\star(z)u = w$ is a periodic linear (inhomogeneous) advance differential equation (ADEs) where $u \in C_T^1(\mathbb{R},\mathbb{C}^n)$ is the unknown. If $u$ is a unique solution of this equation, we denote $u$ by $\Delta^\star(z)^{-1} w$. The reason \eqref{eq:adjoint charac operator} induces differential equations with advanced arguments is due to the $(\tau+\theta)$-term inside the kernel $\zeta$ from \eqref{eq:adjoint charac operator}. For any integer $l \geq 0$, the $l$th order derivative of the map $z \mapsto \Delta^\star(z)$ will be denoted by $z \mapsto \Delta^{\star (l)} (z)$, and we use the same notation for $\Delta(z)$. If $l \geq 1$, it follows from \eqref{eq:Delta(z)q} that $\Delta^{(l)}(z)$ and $\Delta^{ \star (l)}(z)$ are bounded for all $z \in \mathbb{C}$ since the differential operator vanishes after differentiation with respect to $z$. Employing \eqref{eq: pairingTCn} and \eqref{eq:pairing charac}, it is clear that $\Delta^{\star (l)}(z)$ is the unique linear operator satisfying
\begin{equation} \label{eq:pairing charac deri}
    \langle \Delta^{\star (l)}(z)p,q \rangle_T = \langle p,\Delta^{(l)}(z)q \rangle_T, \quad \forall l \geq 1, \ z \in \mathbb{C}, \ (p,q) \in C_T(\mathbb{R},\mathbb{C}^{n \star}) \times  C_T(\mathbb{R},\mathbb{C}^n),
\end{equation}
since the pairing $\langle \cdot, \cdot \rangle_T$ is nondegenerate. To derive the explicit formulas for the adjoint (generalized) eigenfunctions, we proceed from \cite[Section 3.2]{Article2}. Therefore, let us introduce the linear operator $\mathcal{A}^\star : \mathcal{D}(\mathcal{A}^\star) \subseteq C_T(\mathbb{R},X^\star)  \to C_T(\mathbb{R},X^\star)$ by
\begin{align*}
    \mathcal{D}(\mathcal{A}^\star) &\coloneqq \{ \varphi^\star \in C_T^1(\mathbb{R},X^\star) : \varphi^\star(\tau) \in \mathcal{D}(A^\star(\tau)) \mbox{ for all $\tau \in \mathbb{R}$} \}, \\
    (\mathcal{A}^\star\varphi^\star)(\tau) &\coloneqq A^\star(\tau)\varphi^\star(\tau) + \dot{\varphi}^\star(\tau).
\end{align*}
and recall from \cite[Proposition 14]{Article1} that $\mathcal{A}^\star$ is not closed but closable. Since $\mathcal{D}(A^\star(\tau))$ is not norm-dense in $X^\star$ (\cref{sec:periodic center manifolds}), we can not expect, as for the operator $\mathcal{A}$, that $\mathcal{A}^\star$ is densely defined. The following result provides us the explicit computational formulas for the adjoint (generalized) eigenfunctions when a Floquet multiplier $\lambda \in \mathbb{C} \setminus \mathbb{R}_{-}$. Its autonomous analogue can be found in \cite[Theorem IV.5.9]{Diekmann1995}.

\begin{theorem} \label{thm:adjoint eigenfunctions2}
Let $\lambda \in \mathbb{C} \setminus \mathbb{R}_{-}$ be a Floquet multiplier of algebraic multiplicity $m_\lambda$ with $\sigma$ its associated Floquet exponent. Then there exist $\varphi_{i}^\odot \in C_T^{k+1}(\mathbb{R},X^\odot)$ satisfying
\begin{equation} \label{eq:ODE adjoint}
    (\mathcal{A}^\star - \sigma I)\varphi_i^\odot=
    \begin{cases}
    0, \quad &i = m_\lambda-1,\\
    \varphi_{i+1}^\odot, \quad &i=m_\lambda-2,\dots,0.
    \end{cases}
\end{equation}
such that the set of functions $\{\varphi_{m_\lambda - 1}^\odot(\tau),\dots,\varphi_{0}^\odot(\tau) \}$ is an ordered basis of $E_\lambda^\odot(\tau)$. If the functions  $p_i \in C_T^{k + 1}(\mathbb{R},\mathbb{C}^{n \star})$ satisfy the periodic linear ADEs
\begin{equation} \label{eq:p_i}
\sum_{l=0}^{m_\lambda - 1 - i} \frac{1}{l!}\Delta^{\star (l)} (\sigma)  p_{i+l} = 0,
\end{equation}
then the adjoint (generalized) eigenfunctions $\varphi_i^\odot$ are given by
\begin{equation} \label{eq:varphi_i^sun}
    \varphi_i^\odot(\tau) = \bigg( p_{i}(\tau), \theta \mapsto \sum_{l = 0}^{m_\lambda - 1 - i} \int_\theta^h e^{\sigma(\theta - s)} \frac{(\theta -s)^l}{l!} p_{i+l}(\tau+s-\theta) d_2\zeta(\tau+s-\theta,s) \bigg).
\end{equation}
\end{theorem}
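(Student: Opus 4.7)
The plan is to mirror the proof of \cref{thm:eigenfunctions} on the primal side, now using the sun-dual decomposition. The existence and $C^{k+1}$-smoothness of the $T$-periodic functions $\varphi_i^\odot$ satisfying \eqref{eq:ODE adjoint}, together with the basis property of $\{\varphi_{m_\lambda-1}^\odot(\tau),\dots,\varphi_0^\odot(\tau)\}$, can be imported from the adjoint analogue of \cite[Theorem 5]{Article2}; only the explicit formulas \eqref{eq:p_i} and \eqref{eq:varphi_i^sun} need to be derived.

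First, via the isomorphism $X^\odot \cong \mathbb{R}^{n\star} \times L^1([0,h],\mathbb{R}^{n\star})$, I decompose $\varphi_i^\odot(\tau) = (p_i(\tau), g_i(\tau,\cdot))$, where $p_i(\tau)$ is the ``$f(0^+)$''-component and $g_i(\tau,\cdot)$ is the derivative described by \eqref{eq:D(Astars)}. Substituting this decomposition into the Jordan chain equation $(\mathcal{A}^\star - \sigma I)\varphi_i^\odot = \varphi_{i+1}^\odot$ (with the convention $\varphi_{m_\lambda}^\odot = 0$) and applying the action $A^\odot(\tau)(c,g) = g + c\zeta(\tau,\cdot)$ from \eqref{eq:D(Asuns)} yields a coupled system: a first-order linear PDE in $(\tau,\theta)$ for $g_i$ with forcing involving $p_i$, $\zeta(\tau,\cdot)$ and $g_{i+1}$, together with the boundary condition $g_i(\tau,h)=0$ inherited from the domain \eqref{eq:D(Astars)}, plus a compatibility equation for $p_i$ that emerges from the $\mathbb{R}^{n\star}$-component.

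Second, I solve the PDE by the method of characteristics, integrating from $\theta$ to $h$ rather than from $0$ to $\theta$ (this is the natural direction dictated by the $\theta=h$ boundary condition and is the key sign-level difference from the primal case). Unrolling the recursion inductively from $i = m_\lambda-1$ down to $i = 0$ produces exactly the closed-form expression \eqref{eq:varphi_i^sun}; periodicity and $C^{k+1}$-smoothness in $\tau$ are preserved automatically because $p_i$ and the kernel $\zeta$ enjoy them. Finally, I insert \eqref{eq:varphi_i^sun} into the compatibility equation, interchange the $s$- and $d_2\zeta$-integrations (a Fubini-type argument of the same flavour as in the proof of \cref{lemma:adjoint charac}), and match powers of the parameter $\theta$; this produces \eqref{eq:p_i}, with the higher-order operators $\Delta^{\star(l)}(\sigma)$ appearing from differentiating the exponential $e^{-\sigma\theta}$ in \eqref{eq:adjoint charac operator}, in perfect analogy with how \eqref{eq:q_i} arises from $\Delta^{(l)}$ on the primal side.

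The main technical obstacle will be the careful handling of the Riemann-Stieltjes kernel $d_2\zeta(\tau+s-\theta,s)$ in \eqref{eq:varphi_i^sun}: the shift $\tau+s-\theta$ in the first argument of $\zeta$ couples nontrivially with the $d_2$ differential when we compute derivatives in $\tau$ or $\theta$ and when we swap orders of integration to produce the advance structure of $\Delta^\star(\sigma)$. A secondary subtlety is ensuring that the candidate $\varphi_i^\odot(\tau)$ genuinely lies in $\mathcal{D}(A^\odot(\tau)) \subseteq X^\odot$ and not merely in $X^\star$ (recall $\mathcal{D}(A^\star(\tau))$ is not norm dense in $X^\star$); the fact that $g_i(\tau,h)=0$ holds for \eqref{eq:varphi_i^sun} (since the integral from $h$ to $h$ is empty) together with the $L^1$-regularity of the integrand is the consistency check that makes this work.
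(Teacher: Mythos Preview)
Your plan is sound and lands on the same formulas, but the route is organized differently from the paper's proof in one structural respect that is worth making explicit.

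The paper does \emph{not} write a PDE for the derivative component $g_i$ directly. Instead it stays at the $X^\star=\NBV$ level and writes the first-order PDE for the NBV function $\varphi_i^\odot(\tau)(\theta)$ itself, namely
\[
\Big(\partial_\tau+\partial_\theta-\sigma\Big)\varphi_i^\odot(\tau)(\theta)=-p_i(\tau)\zeta(\tau,\theta)+\varphi_{i+1}^\odot(\tau)(\theta),
\]
where the forcing involves the \emph{function} $\zeta(\tau,\cdot)$, not its Stieltjes differential. This is solved by characteristics integrating from $0$ to $\theta$ with the initial value $p_i(\tau)$ at $0^+$. Only afterwards does the paper compute $g_i=\partial_\theta\varphi_i^\odot$ via the Leibniz rule, simplify through two integration-by-parts identities, and then impose the domain condition $g_i(\cdot,h)=0$; that condition yields the equation for $\dot p_i$ (hence \eqref{eq:p_i}) and, when fed back, produces the clean recursion for $g_i$ as an integral from $\theta$ to $h$.

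Your shortcut---set up the PDE for $g_i$ directly and integrate from $\theta$ to $h$ using $g_i(\tau,h)=0$---gets to that recursion in one stroke, but the price is that the forcing now carries the \emph{measure} $d_2\zeta(\tau,\cdot)$ rather than the function $\zeta(\tau,\cdot)$. The characteristics solution must therefore be read as a Riemann--Stieltjes integral along the characteristic line; you should say this explicitly, since ``applying the action $A^\odot(\tau)(c,g)=g+c\zeta(\tau,\cdot)$'' does not by itself hand you a classical PDE for $g_i$ (the $g'$-part of $g+c\zeta(\tau,\cdot)$ in the $X^\odot$ splitting is only $L^1$ and is defined implicitly by the domain condition). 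The paper's longer route avoids this by keeping the forcing function-valued until after the differentiation.

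Two minor points: the phrase ``match powers of the parameter $\theta$'' in your final step is misplaced---the compatibility equation lives at $\theta\to 0^+$, so there is nothing to match; the $\Delta^{\star(l)}(\sigma)$ terms appear from the polynomial factors $(\theta-s)^l/l!$ in the unrolled recursion for $g_i$, combined with the explicit form \eqref{eq:adjoint charac operator}. And the existence/smoothness input is \cite[Theorem 8]{Article1}, not an adjoint analogue of \cite[Theorem 5]{Article2}.
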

\begin{proof}
The claim on the existence and $C^{k+1}$-smoothness of the $T$-periodic adjoint (generalized) eigenfunctions $\varphi_i^\odot$ satisfying \eqref{eq:ODE adjoint} is proven in \cite[Theorem 8]{Article1}. If we manage to prove that $\varphi_i \in \mathcal{D}(\mathcal{A}^\star)$, then $\varphi_i^\odot$ takes values in $X^\odot$ as $\mathcal{D}(A^\star(\tau)) \subseteq X^\odot$, see \cref{sec:periodic center manifolds}. Recall from \eqref{eq:D(Astars)} that $\varphi_i^{\odot}$ satisfies
\begin{equation} \label{eq:PDEadjoint}
    \bigg( \frac{\partial}{\partial \tau} + \frac{\partial}{\partial \theta} - \sigma \bigg) \varphi_i^{\odot}(\tau)(\theta) = 
    \begin{dcases}
    -p_{m_\lambda - 1}(\tau)\zeta(\tau,\theta), \quad &i = m_\lambda - 1, \\
    -p_{i}(\tau)\zeta(\tau,\theta) + \varphi_{i+1}^{\odot}(\tau)(\theta), \quad &i=m_\lambda - 2, \dots,0,
    \end{dcases}
\end{equation}
for all $\tau \in \mathbb{R}$ and $\theta \in (0,h]$, since at $\theta = 0$ a discontinuous jump is allowed when we define $\varphi_i^\odot(\tau)(0) \coloneqq 0$. In both cases, this is a first-order linear inhomogeneous PDE that has the solution
\begin{equation*}
    \varphi_i^\odot(\tau)(\theta) =
    e^{\sigma \theta}p_{i}(\tau-\theta) - \int_0^{\theta}e^{\sigma(\theta-s)}[p_i(\tau + s-\theta)\zeta(\tau+s-\theta,s) - \varphi_{i+1}^\odot(\tau+s-\theta)(s) ] ds,
\end{equation*}
for all $\tau \in \mathbb{R}$ and $\theta \in (0,h]$, whenever an initial condition $p_i \in C_T^{k+1}(\mathbb{R},\mathbb{C}^{n \star})$ for $i=m_\lambda-1,\dots,0$ is specified. Here, we set $\varphi_{m_\lambda}^\odot$ to be the zero function. The next goal is to determine $p_i$ and the derivative $g_i$ of $\varphi_i^\odot$ by recalling the fact that $\varphi_i^\odot(\tau) \in X^{\odot}$ for all $\tau \in \mathbb{R}$. Hence, we have to show that for every $\tau \in \mathbb{R}$ there exists a map $g_i(\tau) \in L^1([0,h], \mathbb{C}^{n \star})$ satisfying $g_i(\cdot)(h) = 0$ such that
\begin{equation*}
\varphi_i^{\odot}(\tau)(\theta) = p_i(\tau) + \int_0^\theta g_i(\tau)(s) ds, \quad \forall \tau \in \mathbb{R}, \ \theta \in (0,h].
\end{equation*}
A candidate for $g_i(\tau)(\theta)$ is $\frac{\partial}{\partial \theta}\varphi_i(\tau)(\theta)$ in a weak sense. The Leibniz integral rule shows us that
\begin{align*}
    g_i(\tau)(\theta) &= \sigma e^{\sigma \theta} - e^{\sigma \theta}\dot{p}_i(\tau - \theta)-p_i(\tau) \zeta(\tau,\theta) + \varphi_{i+1}^\odot(\tau)(\theta) \\
    &-\sigma\int_0^\theta e^{\sigma(\theta-s)}[p_i(\tau+s-\theta) \zeta(\tau+s-\theta,s) -\varphi_{i+1}^\odot(\tau+s-\theta)(s)]ds \\
    &+\int_0^\theta e^{\sigma(\theta-s)}[\dot{p}_i(\tau+s-\theta)\zeta(\tau+s-\theta,s) \\
    &+ p_i(\tau+s-\theta) D_1 \zeta(\tau+s-\theta,s) - D_1 \varphi_{i+1}^\odot(\tau+s-\theta)(s)] ds,
\end{align*}
If we use the formulas
\begin{align*}
    &\int_0^\theta e^{\sigma(\theta-s)}\dot{p}_i(\tau+s-\theta) \zeta(\tau+s-\theta,s) ds \\
    &= p_i(\tau)\zeta(\tau,\theta) -\int_0^\theta e^{\sigma(\theta-s)}p_i(\tau+s-\theta)[-\sigma \zeta(\tau+s-\theta,s) + D_1\zeta(\tau+s-\theta,s)] ds \\
    &- \int_0^\theta e^{\sigma(\theta-s)}p_i(\tau+s-\theta)d_2\zeta(\tau+s-\theta,s)
\end{align*}
and
\begin{align*}
    \int_0^\theta e^{\sigma(\theta-s)}\varphi_{i+1}^\odot(\tau+s-\theta)(s) ds &= - \frac{1}{\sigma}\bigg(\varphi_{i+1}^\odot(\tau)(\theta) - p_{i+1}(\tau-\theta)e^{-\sigma \theta}\\
    &- \int_0^\theta e^{\sigma(\theta-s)}[D_1 \varphi_{i+1}^\odot(\tau+s-\theta)(s) + g_{i+1}(\tau+s-\theta)(s)]ds\bigg)
\end{align*}
obtained via integration by parts (for Riemann-Stieltjes integrals), we get
\begin{align} \label{eq:g_i}
    g_i(\tau)(\theta) &= \sigma e^{\sigma  \theta}p_i(\tau-\theta) - e^{\sigma  \theta}\dot{p}_i(\tau - \theta) + e^{\sigma \theta}p_{i+1}(\tau-\theta) + \int_0^\theta e^{\sigma (\theta-s)}g_{i+1}(\tau+s-\theta)(s) ds \nonumber \\
    &-\int_0^\theta e^{\sigma(\theta-s)} p_i(\tau+s-\theta) d_2\zeta(\tau+s-\theta,s).
\end{align}
Recall that $g_i$ must satisfy in particular $g_i(\tau - \theta +h)(h) = 0$ and so we get
\begin{align} 
\begin{split} \label{eq:pidot}
    \dot{p}_i(\tau-\theta) &= \sigma p_i(\tau-\theta) +p_{i+1}(\tau-\theta) + \int_0^h e^{-\sigma s}g_{i+1}(\tau+s-\theta)(s) ds \\
    &-\int_0^h e^{-\sigma s}p_{i}(\tau+s-\theta) d_2 \zeta(\tau+s-\theta,s).    
\end{split}
\end{align}
Plugging this result back into \eqref{eq:g_i}, we obtain
\begin{equation*}
    g_i(\tau)(\theta) = \int_\theta^h e^{\sigma(\theta-s)} p_i(\tau+s-\theta) d_2 \zeta(\tau+s-\theta,s) - \int_\theta^h e^{\sigma(\theta-s)}g_{i+1}(\tau+s-\theta)(s) ds.
\end{equation*}
Using this recurrence relation in combination with Fubini's theorem we get
\begin{equation*}
    g_i(\tau)(\theta) = \sum_{l = 0}^{m_\lambda - 1 - i} \int_\theta^h e^{\sigma(\theta - s)} \frac{(\theta -s)^l}{l!} p_{i+l}(\tau+s-\theta) d_2\zeta(\tau+s-\theta,s),
\end{equation*}
which yields \eqref{eq:varphi_i^sun} and it is clear that $g_i(\tau) \in L^1([0,h], \mathbb{C}^{n \star})$. If we put $\theta = 0$ in \eqref{eq:pidot}, we obtain \eqref{eq:p_i} by using the recurrence relation derived above. Because $p_i$ is at least continuous, we have that the integrand of $g_i(\tau)$ has bounded variation on $[0,h]$. If we define $g_i(\cdot)(0) = 0$, it is clear that $g(\tau) \in \NBV([0,h],\mathbb{C}^{n\star})$ and so $\varphi_i^\odot(\tau) \in \mathcal{D}(A^\star(\tau))$ for all $\tau \in \mathbb{R}$.
\end{proof}

Notice that \cref{thm:adjoint eigenfunctions2} also holds for the linear operator $\mathcal{A}^\odot : \mathcal{D}(\mathcal{A}^\odot) \subseteq C_T(\mathbb{R},X^{\odot}) \to C_T(\mathbb{R},X^\odot)$ defined by
\begin{equation*}
    \mathcal{D}(\mathcal{A}^{\odot}) \coloneqq \{ \varphi^\odot \in C_T^1(\mathbb{R},X^{\odot}) : \varphi^\odot(\tau) \in \mathcal{D}(A^{\odot}(\tau)) \mbox{ for all $\tau \in \mathbb{R}$} \}, \quad \mathcal{A}^{\odot} \varphi^\odot \coloneqq \mathcal{A}^{\star} \varphi^\odot, 
\end{equation*}
since one can verify \cite[Theorem 13]{Article1} that the adjoint (generalized) eigenfunctions $\varphi_{m_\lambda - 1}^\odot,\dots,\varphi_0^\odot$ are in $\mathcal{D}(\mathcal{A}^\odot)$. Moreover, recall from \cite[Theorem 13]{Article1} that $\mathcal{A}^\odot$ is not closed but closable. However, as $C_T^1(\mathbb{R},X^{\odot})$ is norm dense in $C_T(\mathbb{R},X^{\odot})$ and $\mathcal{D}(A^\odot(\tau))$ is norm dense in $X^\odot$, it follows from \cite[Lemma 17]{Article1} that $\mathcal{A}^\odot$ is densely defined. To go full circle, let us introduce in addition the linear operator $\mathcal{A}^{\odot \star} : \mathcal{D}(\mathcal{A}^{\odot \star}) \subseteq C_T(\mathbb{R},X^{\odot \star}) \to C_T(\mathbb{R},X^{\odot \star})$ by
\begin{align*}
    \mathcal{D}(\mathcal{A}^{\odot \star}) &\coloneqq \{ \varphi^{\odot \star} \in C_T^1(\mathbb{R},X^{\odot \star}) : \varphi^{\odot \star}(\tau) \in \mathcal{D}(A^{\odot \star}(\tau)) \mbox{ for all $\tau \in \mathbb{R}$} \}, \\
    (\mathcal{A}^{\odot \star}\varphi^{\odot \star})(\tau) &\coloneqq A^{\odot \star}(\tau)\varphi^{\odot \star}(\tau) - \dot{\varphi}^{\odot \star}(\tau),
\end{align*}
and it follows from \cite[Proposition 6]{Article2} that $\mathcal{A}^{\odot \star}$ is not closed but closable. Again, as $\mathcal{D}(A^{\odot \star}(\tau))$ is not norm-dense in $X^{\odot \star}$ (\cref{sec:periodic center manifolds}), we can not expect that $\mathcal{A}^{\odot \star}$ is densely defined. To relate the Jordan chains of $\mathcal{A}$ from \cref{thm:eigenfunctions} with those of $\mathcal{A}^{\odot \star}$, let us first introduce the linear (canonical) embedding $\iota : C_T(\mathbb{R},X) \to C_T(\mathbb{R},X^{\odot \star})$ by $(\iota \varphi)(\tau) \coloneqq j \varphi(\tau)$. Note that $\iota \varphi$ takes values in $X^{\odot \odot}$ due to the $\odot$-reflexivity of $X$ with respect to $U$. Then, according to \cite[Theorem 5]{Article1}, there holds
\begin{equation} \label{eq:ODEeig}
    (\mathcal{A}^{\odot \star} - \sigma I)\iota\varphi_i=
    \begin{cases}
    0, \quad &i = 0,\\
    \iota\varphi_{i-1}, \quad &i=1,\dots,m_\lambda-1.
    \end{cases}
\end{equation}
Recall from \eqref{eq:pairing charac} that $\Delta^\star(z)$ is the unique \emph{adjoint} of $\Delta(z)$ with respect to the pairing $\langle \cdot, \cdot \rangle_T$ from \eqref{eq: pairingTCn}. We would like to prove a similar statement for on the one hand $\mathcal{A}$ and $\mathcal{A}^\star$, and on the other hand $\mathcal{A}^\odot$ and $\mathcal{A}^{\odot \star}$. To do this, let us introduce the (complexified) bilinear maps $\langle \cdot, \cdot \rangle_T : C_T(\mathbb{R},X^\star) \times C_T(\mathbb{R},X) \to \mathbb{C}$ and $\langle \cdot, \cdot \rangle_T : C_T(\mathbb{R},X^{ \odot \star}) \times C_T(\mathbb{R},X^\odot) \to \mathbb{C}$, also called \emph{pairings}, by
\begin{align} \label{eq:pairingT}
\begin{split}
    \langle \varphi^\star, \varphi \rangle_T &\coloneqq \int_0^T \langle \varphi^\star(\tau), \varphi(\tau) \rangle d\tau, \quad \forall (\varphi^\star,\varphi) \in  C_T(\mathbb{R},X^\star) \times  C_T(\mathbb{R},X), \\
    \langle \varphi^{\odot\star}, \varphi^\odot \rangle_T &\coloneqq \int_0^T \langle \varphi^{\odot \star}(\tau), \varphi^{\odot}(\tau) \rangle d\tau, \quad \forall (\varphi^{\odot\star},\varphi^\odot) \in  C_T(\mathbb{R},X^{\odot \star}) \times  C_T(\mathbb{R},X^\odot),
\end{split}
\end{align}
where the natural dual pairing $\langle \cdot, \cdot \rangle$ is between $X^\star$ and $X$ in the first integral, and between $X^{\odot \star}$ and $X^\odot$ in the second integral, recall \cref{sec:periodic center manifolds}. It follows from \cite[Lemma 19 and Proposition 20]{Article1} that these maps are nondegenerate and satisfy the adjoint relations
\begin{equation} \label{eq:curlyadjoint}
    \langle \mathcal{A}^\star \varphi^\star, \varphi \rangle_T = \langle \varphi^\star,\mathcal{A} \varphi \rangle_T, \quad \langle \mathcal{A}^{\odot \star} \varphi^{\odot \star}, \varphi^\odot \rangle_T = \langle \varphi^{\odot \star},\mathcal{A}^\odot \varphi^\odot \rangle_T,
\end{equation}
for all $\varphi \in \mathcal{D}(\mathcal{A}), \varphi^\star \in \mathcal{D}(\mathcal{A}^\star), \varphi^\odot \in \mathcal{D}(\mathcal{A}^\odot)$ and $\varphi^{\odot \star} \in \mathcal{D}(\mathcal{A}^{\odot \star})$. Moreover, $\mathcal{A}^\star$ and $\mathcal{A}^{\odot \star}$ are the unique linear operators satisfying \eqref{eq:curlyadjoint} since $\mathcal{A}$ and $\mathcal{A}^\odot$ are densely defined.

A vital tool in the theory of characteristic matrices is that the Jordan chains of the (adjoint) characteristic matrix induce (adjoint) (generalized) eigenfunctions of the generator, see \cite{Kaashoek1992,Diekmann1995,Bosschaert2020,Bosschaert2024a}. It turns out that a similar construction holds for characteristic operators since the ordered set of functions $\{q_0,\dots,q_{m_\lambda - 1} \}$ satisfying \eqref{eq:q_i} forms a \emph{Jordan chain} (of rank $m_\lambda$) for $\Delta(\sigma)$, and the ordered set of functions $\{p_{m_\lambda - 1},\dots,p_0 \}$ satisfying \eqref{eq:p_i} forms a \emph{Jordan chain} (of rank $m_\lambda$) for $\Delta^\star(\sigma)$, see \cite[Section IV.4]{Diekmann1995} and \cite[Section I.1.2]{Kaashoek1992}. Moreover, we call $q_0$ a \emph{null function} of $\Delta(\sigma)$ and $p_0$ a \emph{null function} of $\Delta^\star(\sigma)$.

\begin{remark} \label{remark:Tantiperiodic}
When a Floquet multiplier $\lambda \in \mathbb{R}_{-}$, for example in the case of the period-doubling bifurcation ($\lambda = -1$), it is conventional to work with $T$-antiperiodic functions \cite{Iooss1999,Kuznetsov2005,Article1}. The existence and $C^{k+1}$-smoothness of $T$-antiperiodic (adjoint) (generalized) eigenfunctions was proven in \cite[Proposition 10 and 16]{Article1}. When the elements of the Jordan chains $\{q_0,\dots,q_{m_\lambda - 1} \}$ and $\{p_{m_\lambda - 1},\dots,p_0\}$ are $T$-antiperiodic, it follows from \eqref{eq:varphi_i} and \eqref{eq:varphi_i^sun} that the (adjoint) (generalized) eigenfunctions $T$-antiperiodic. Furthermore, the (adjoint) characteristic operator and pairings can still be defined on a subspace of the function space consisting of $T$-antiperiodic $\mathbb{C}^{n}$- or $\mathbb{C}^{n \star}$-valued continuous functions. \hfill $\lozenge$
\end{remark}

Our next goal it to characterize the pairings between adjoint eigenfunctions and (generalized) eigenfunctions, as this will be important in \cref{sec:periodic normalization}. As we are interested in codimension one bifurcations of limit cycles (\cref{subsec:periodic normal forms}), we will restrict ourselves to two different cases regarding the algebraic multiplicity of the Floquet multiplier. First, when studying the period-doubling and Neimark-Sacker bifurcation, the multipliers are simple. Second, when studying the fold bifurcation, the trivial multiplier will have algebraic multiplicity two and geometric multiplicity one. We start by studying the first, more simple case. The autonomous analogue of the following result can be found in \cite[Corollary IV.5.12]{Diekmann1995}.
\begin{proposition} \label{prop:pairing}
Let $\lambda$ be a simple Floquet multiplier with $\sigma$ its associated Floquet exponent. If $q$ is a null function of $\Delta(\sigma)$, then $\varphi$ defined by
\begin{equation} \label{eq:simple eig}
    \varphi(\tau)(\theta) = e^{\sigma\theta} q(\tau + \theta), \quad \forall \tau \in \mathbb{R}, \ \theta \in [-h,0],
\end{equation}
is an eigenfunction of $\mathcal{A}$ corresponding to $\sigma$. Furthermore, if $p$ is a null function of $\Delta^\star(\sigma)$, then $\varphi^\odot$ defined by
\begin{equation} \label{eq:adjointsimple}
    \varphi^\odot (\tau) = \bigg(p(\tau), \theta \mapsto \int_\theta^h e^{\sigma(\theta -s)}p(\tau + s - \theta) d_2 \zeta(\tau + s - \theta,s)\bigg), \quad \forall \tau \in \mathbb{R},
\end{equation}
is an adjoint eigenfunction of $\mathcal{A}^\star$ corresponding to $\sigma$. Moreover, the following identities hold:
\begin{equation} \label{eq:pairingsimple}
    \langle \varphi^\odot, \varphi \rangle_T = \langle p, \Delta'(\sigma) q \rangle_T \neq 0, \quad \langle \varphi^\odot, \tau \mapsto A(\tau)\varphi(\tau) \rangle_T = \langle p, \Delta'(\sigma)[\sigma q + \dot{q} ]\rangle_T.
\end{equation}
\begin{proof}
The formulas for the eigenfunction and adjoint eigenfunction follow directly from \cref{thm:eigenfunctions} and \cref{thm:adjoint eigenfunctions2}. To prove the pairing identities \eqref{eq:pairingsimple}, recall from the duality pairing \eqref{eq:Xsunpairing} that
\begin{equation} \label{eq:pairingcomp}
    \langle \varphi^\odot, \varphi  \rangle_T = \int_0^T p(\tau)q(\tau) d\tau + \int_0^T \int_0^h \int_\theta^h e^{- \sigma s} p(\tau + s - \theta) d_2 \zeta(\tau+s-\theta,s)q(\tau-\theta) d\theta d\tau.
\end{equation}
Perform in the triple integral the change of variables: $\tau = u+v$ and $\theta = v$ and use the fact that $u \mapsto \int_v^h e^{-\sigma s} p(u+s) d_2 \zeta(u+s,s)q(u)$ is $T$-periodic due to the $T$-periodicity of $p,q$ and $\tau \mapsto \zeta(\tau,\cdot)$ since then the triple integral equals
\begin{equation*}
    \int_0^T \int_0^h \int_v^h e^{-\sigma s} p(u+s) d_2 \zeta(u+s,s) dv q(u) du = \int_0^T \int_0^h e^{-\sigma s} s p(u+s) d_2 \zeta(u+s,s) q(u) du,
\end{equation*}
where we used twice Fubini's theorem, but on different integrals. Hence,
\begin{equation*}
    \langle \varphi^\odot, \varphi  \rangle_T = \int_0^T p(\tau)q(\tau) + \int_0^h e^{-\sigma s} s p(\tau+s) d_2 \zeta(\tau+s,s) q(\tau) d\tau = \langle \Delta^{\star \prime }(\sigma)p, q \rangle_T = \langle p, \Delta'(\sigma)q \rangle_T,
\end{equation*}
where the last equality follows from \eqref{eq:pairing charac deri}. It remains to show that this pairing is non-vanishing. Let us first assume that $\lambda \in \mathbb{C} \setminus \mathbb{R}_{-}$. Then, recalling \cite[Corollary 25]{Article1} yields
\begin{equation} \label{eq:directsum1}
    C_T(\mathbb{R},X) = \mathcal{N}(\sigma I - \mathcal{A}) \oplus \overline{\mathcal{N}(\sigma I -\mathcal{A}^\star)^{\perp}},
\end{equation}
where the annihilator $\perp$ is defined in terms of the first pairing from \eqref{eq:pairingT}. If $\langle \varphi^\odot, \varphi \rangle_T = 0$, then $\varphi \in \mathcal{N}(\sigma I -\mathcal{A}^\star)^{\perp}$ but there also holds $\varphi \in \mathcal{N}(\sigma I - \mathcal{A})$ and so the eigenfunction $\varphi$ must be by \eqref{eq:directsum1}, a contradiction. When $\lambda \in \mathbb{R}_{-}$, the result follows from \cite[Remark 25]{Article1}.

Let us now prove the second equation of \eqref{eq:pairingsimple}. It follows from \eqref{eq:D(As)} and \eqref{eq:simple eig} that
\begin{equation*}
    \langle \varphi^\odot, \tau \mapsto A(\tau)\varphi(\tau) \rangle_T = \int_0^T \langle \varphi^\odot(\tau), \theta \mapsto e^{\sigma \theta}[\sigma q(\tau+\theta) + \dot{q}(\tau + \theta)] \rangle d\tau.
\end{equation*}
By linearity, the right-hand side can be split into two parts for which the first term equals $\sigma \langle p, \Delta'(\sigma) q \rangle_T$ due to \eqref{eq:pairingsimple}, and the second term equals $\int_0^T \langle \varphi^\odot(\tau), \theta \mapsto e^{\sigma \theta} \dot{q}(\tau + \theta) \rangle d\tau$. This expression equals the right hand-side of \eqref{eq:pairingcomp} where $q$ must be substituted by $\dot{q}$. The exact same proof can be followed to conclude that this term equals $\langle p,\Delta'(\sigma) \dot{q} \rangle_T$, which proves the second equation of \eqref{eq:pairingsimple}.
\end{proof}
\end{proposition}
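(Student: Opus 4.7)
The explicit forms for $\varphi$ and $\varphi^\odot$ are immediate specializations of \cref{thm:eigenfunctions} and \cref{thm:adjoint eigenfunctions2} to $m_\lambda=1$, so the entire task reduces to verifying the two pairing identities and establishing non-degeneracy.

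For the first identity, the plan is to expand $\langle \varphi^\odot, \varphi \rangle_T$ using the sun-pairing formula \eqref{eq:Xsunpairing}. This produces a clean term $\int_0^T p(\tau) q(\tau) \, d\tau$ coming from the first component of $\varphi^\odot(\tau)$, plus a triple integral over $(\tau, \theta, s)$ coming from the $L^1$-component and $\varphi(\tau)(-\theta) = e^{-\sigma\theta} q(\tau - \theta)$. The triple integral is the subtle piece: I would perform the change of variables $u = \tau - \theta$, $v = \theta$, exploit $T$-periodicity of $p$, $q$ and $\tau \mapsto \zeta(\tau, \cdot)$ to shift the $u$-domain back to $[0,T]$, and then apply Fubini to swap the order of the $v$- and $s$-integrals. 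The $v$-integration over $[0,s]$ yields a factor $s$, after which the inner structure $\int_0^h e^{-\sigma s} s \, p(u+s) \, d_2\zeta(u+s,s)$ is precisely the $\tau = u$ value of $[\Delta^{\star\prime}(\sigma) p](u) - p(u) \cdot 0$ minus the derivative-free terms — more cleanly, combining with the first single integral one recognizes $\langle \Delta^{\star\prime}(\sigma) p, q \rangle_T$. Then \eqref{eq:pairing charac deri} converts this to $\langle p, \Delta'(\sigma) q \rangle_T$.

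Non-degeneracy will be handled through the direct-sum decomposition of $C_T(\mathbb{R}, X)$ established in a companion paper, which gives
\begin{equation*}
    C_T(\mathbb{R},X) = \mathcal{N}(\sigma I - \mathcal{A}) \oplus \overline{\mathcal{N}(\sigma I - \mathcal{A}^\star)^{\perp}},
\end{equation*}
where the annihilator is taken with respect to the first pairing of \eqref{eq:pairingT}. If $\langle \varphi^\odot, \varphi \rangle_T = 0$, then $\varphi$ would sit in both summands and hence vanish, contradicting its status as an eigenfunction. The $T$-antiperiodic case (when $\lambda \in \mathbb{R}_-$) requires the analogous decomposition from the parallel result cited in \cref{remark:Tantiperiodic}.

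For the second identity, the idea is to compute $A(\tau)\varphi(\tau)$ pointwise: since $\varphi(\tau) \in \mathcal{D}(A(\tau))$ and $A(\tau)$ acts by differentiation with respect to $\theta$ (see \eqref{eq:D(As)}), differentiating \eqref{eq:simple eig} gives $[A(\tau)\varphi(\tau)](\theta) = e^{\sigma\theta}[\sigma q(\tau+\theta) + \dot{q}(\tau+\theta)]$. Split by linearity: the $\sigma q$-contribution is $\sigma \langle \varphi^\odot, \varphi \rangle_T = \sigma \langle p, \Delta'(\sigma) q \rangle_T$ by the first identity, while the $\dot{q}$-contribution is structurally identical to the first pairing computation with $q$ replaced by $\dot{q}$, hence equals $\langle p, \Delta'(\sigma) \dot{q} \rangle_T$. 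Adding the two gives $\langle p, \Delta'(\sigma)[\sigma q + \dot{q}] \rangle_T$ as claimed. The main obstacle throughout is bookkeeping in the triple-integral reduction — keeping the change of variables, periodicity shifts, and Fubini swaps consistent so that the Riemann–Stieltjes measure $d_2\zeta$ ends up in the correct slot to match $\Delta^{\star\prime}(\sigma)$.
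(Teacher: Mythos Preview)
Your proposal is correct and follows essentially the same route as the paper: the same expansion via \eqref{eq:Xsunpairing}, the same change of variables (your $u=\tau-\theta$, $v=\theta$ is the paper's $\tau=u+v$, $\theta=v$ read inversely), the same periodicity shift and Fubini swap to produce the factor $s$ and recognize $\Delta^{\star\prime}(\sigma)$, the same direct-sum argument for non-degeneracy, and the same split of $A(\tau)\varphi(\tau)$ into $\sigma q$ and $\dot q$ parts for the second identity.
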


As the pairing in \eqref{eq:pairingsimple} is nonzero, it is clear that $\langle \varphi^\odot, \varphi \rangle_T$ can be normalized to $1$. This will be done in \cref{sec:periodic normalization} to select in a consistent and systematic way a unique (adjoint) eigenfunction for the period-doubling and Neimark-Sacker bifurcation. For the fold bifurcation, we need the following result, for which its autonomous analogue can be found in \cite[Lemma 2.7]{Janssens2010}.
\begin{proposition} \label{prop:pairing2}
Let $\lambda$ be a Floquet multiplier of algebraic multiplicity two and geometric multiplicity one with $\sigma$ it associated Floquet exponent. If $\{q_0,q_1\}$ is a Jordan chain for $\Delta(\sigma)$, then $\varphi_{0,1}$ defined by
\begin{equation*}
    \varphi_0(\tau)(\theta) = e^{\sigma \theta} q_0(\tau + \theta), \quad \varphi_1(\tau)(\theta) = e^{\sigma \theta} (\theta q_0(\tau + \theta) + q_1(\tau + \theta)), \quad \forall \tau \in \mathbb{R}, \ \theta \in [-h,0],
\end{equation*}
are an eigenfunction and a generalized eigenfunction of $\mathcal{A}$ corresponding to $\sigma$. Furthermore, if $p$ is a null function of $\Delta^\star(\sigma)$, then $\varphi^\odot$ defined as in \eqref{eq:adjointsimple} is an adjoint eigenfunction of $\mathcal{A}^\star$ corresponding to $\sigma$. Moreover, the following identities hold:
\begin{gather*}
    \langle \varphi^\odot,\varphi_0 \rangle_T = \langle p, \Delta'(\sigma)q_0 \rangle_T = 0, \quad \langle \varphi^\odot,\varphi_1 \rangle_T = \langle p, \Delta'(\sigma)q_1 \rangle_T + \frac{1}{2} \langle p, \Delta''(\sigma)q_0 \rangle_T \neq 0,\\
    \langle \varphi^\odot,\tau \mapsto A(\tau)\varphi_1(\tau) \rangle_T = \langle p, \Delta'(\sigma)[\sigma q_1 + \dot{q}_1]\rangle_T + \frac{1}{2} \langle p, \Delta''(\sigma)[\sigma q_0 + \dot{q}_0] \rangle_T.
\end{gather*}
\begin{proof}
The formulas for the (generalized) eigenfunction and adjoint eigenfunction follow directly from \cref{thm:eigenfunctions} and \cref{thm:adjoint eigenfunctions2}. That $\langle \varphi^\odot,\varphi_0 \rangle_T = \langle p, \Delta'(\sigma)q_0 \rangle_T$ is just the same proof as \cref{prop:pairing}. To show that this pairing vanishes, note that $\langle \varphi^\odot,\varphi_0 \rangle_T = \langle \varphi^\odot, (\mathcal{A}-\sigma I)\varphi_1 \rangle_T = \langle (\mathcal{A}^\star - \sigma I)\varphi^\odot, \varphi_1 \rangle_T = 0,$ where we used \eqref{eq:curlyadjoint} in the second equality, and the fact that $\varphi^\odot$ is an adjoint eigenfunction of $\mathcal{A}^\star$.

To show the second pairing identity, one uses similar techniques as in the proof of \cref{prop:pairing}, see also \cite[Lemma 2.7]{Janssens2010} for a proof in the setting of characteristic matrices. To prove that this pairing is non-vanishing, let us first assume that $\lambda \in \mathbb{C} \setminus \mathbb{R}_{-}$. According to \cite[Corollary 25]{Article1}, there holds
\begin{equation} \label{eq:directsum2}
    C_T(\mathbb{R},X) = \mathcal{N}((\sigma I - \mathcal{A})^2) \oplus \overline{\mathcal{N}((\sigma I -\mathcal{A}^\star)^2)^{\perp}},
\end{equation}
where the annihilator $\perp$ is defined in terms of the pairing from \eqref{eq:pairingT}. Suppose that the pairing vanishes, then $\varphi \in \mathcal{N}((\sigma I - \mathcal{A}^\star)^2)^\perp$, but we also have that $\varphi \in \mathcal{N}((\sigma I - \mathcal{A})^2)$ and so the eigenfunction $\varphi$ must be zero by \eqref{eq:directsum2}, a contradiction. When $\lambda \in \mathbb{R}_{-}$, the result also holds due to \cite[Remark 25]{Article1}.

To prove the last identity, notice that
\begin{equation*}
    \langle \varphi^\odot,\tau \mapsto A(\tau)\varphi_1(\tau) \rangle_T = \int_0^T \langle \varphi^\odot(\tau), \theta \mapsto \sigma \varphi_1(\tau)(\theta) + \varphi_0(\tau)(\theta) + e^{\sigma \theta}(\theta \dot{q}_0(\tau + \theta) + \dot{q}_1(\tau + \theta)) \rangle d\tau.
\end{equation*}
where we used \eqref{eq:D(As)}. Using the previous derived identities together with a similar argument as used in the proof of \cref{prop:pairing}, the result follows.
\end{proof}
\end{proposition}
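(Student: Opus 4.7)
The explicit formulas for $\varphi_0,\varphi_1$ and $\varphi^\odot$ are immediate specialisations of \cref{thm:eigenfunctions} and \cref{thm:adjoint eigenfunctions2} to $m_\lambda = 2$, so only the three pairing identities require work. For the first identity, the equality $\langle \varphi^\odot,\varphi_0\rangle_T = \langle p,\Delta'(\sigma)q_0\rangle_T$ is precisely the computation performed in the proof of \cref{prop:pairing}, since $\varphi_0$ has the simple form \eqref{eq:simple eig}. Vanishing then follows from the adjoint relation \eqref{eq:curlyadjoint}: writing $\varphi_0 = (\mathcal{A}-\sigma I)\varphi_1$ and shifting the operator across the pairing yields $\langle \varphi^\odot,\varphi_0\rangle_T = \langle (\mathcal{A}^\star-\sigma I)\varphi^\odot,\varphi_1\rangle_T = 0$, since $\varphi^\odot$ is a $\sigma$-eigenfunction of $\mathcal{A}^\star$.

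For the middle identity I would rerun the \cref{prop:pairing} computation with $\varphi_1$ in place of $\varphi$. Expanding $\langle \varphi^\odot(\tau),\varphi_1(\tau)\rangle$ via \eqref{eq:Xsunpairing} using
\begin{equation*}
\varphi_1(\tau)(-\theta) = e^{-\sigma\theta}\bigl(-\theta q_0(\tau-\theta) + q_1(\tau-\theta)\bigr),
\end{equation*}
integrating over $[0,T]$, making the substitution $u = \tau-\theta,\, v = \theta$ (and restoring the $u$-range to $[0,T]$ by $T$-periodicity), and swapping the $v$- and $s$-integrals by Fubini exactly as in the proof of \cref{prop:pairing}, the $q_1$-part reproduces $\langle p,\Delta'(\sigma)q_1\rangle_T$ verbatim. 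The additional $-\theta q_0$ piece contributes, after the $v$-integration, a quadratic factor $-\tfrac{s^2}{2}$ inside the remaining $(\tau,s)$-integral, and comparing with the formula for $\Delta''(\sigma)q_0$ obtained by differentiating \eqref{eq:Delta(z)q} twice in $z$ reassembles it as $\tfrac{1}{2}\langle p,\Delta''(\sigma)q_0\rangle_T$. For non-vanishing I would invoke the direct-sum decomposition \eqref{eq:directsum2}: it implies that the induced bilinear pairing between the finite-dimensional spaces $\mathcal{N}((\sigma I-\mathcal{A})^2)$ and $\mathcal{N}((\sigma I-\mathcal{A}^\star)^2)$ is non-degenerate, so the $2\times 2$ pairing matrix $M_{ij} := \langle \varphi_i^\odot,\varphi_j\rangle_T$ on the Jordan bases (with the indexing of \cref{thm:adjoint eigenfunctions2}, so $\varphi_1^\odot = \varphi^\odot$) is invertible. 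Since the first identity gives $M_{10} = 0$, the expansion $\det M = M_{00}M_{11} - M_{01}M_{10} = M_{00}M_{11}$ forces $M_{11} = \langle \varphi^\odot,\varphi_1\rangle_T \neq 0$. The case $\lambda \in \mathbb{R}_-$ reduces to the $T$-antiperiodic analogue recorded in \cite[Remark 25]{Article1}.

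For the third identity I would use the domain description \eqref{eq:D(As)} to identify $A(\tau)\varphi_1(\tau)(\theta) = \partial_\theta\varphi_1(\tau)(\theta)$. Differentiating $\varphi_1(\tau)(\theta) = e^{\sigma\theta}\bigl(\theta q_0(\tau+\theta) + q_1(\tau+\theta)\bigr)$ yields
\begin{equation*}
A(\tau)\varphi_1(\tau)(\theta) = \sigma\varphi_1(\tau)(\theta) + \varphi_0(\tau)(\theta) + e^{\sigma\theta}\bigl(\theta\dot{q}_0(\tau+\theta) + \dot{q}_1(\tau+\theta)\bigr).
\end{equation*}
Pairing with $\varphi^\odot$ and integrating splits into three pieces: $\sigma$ times the middle identity, zero from the first identity, and a copy of the middle-identity computation with $(q_0,q_1)$ replaced by $(\dot{q}_0,\dot{q}_1)$. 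Summing reassembles the claimed formula.

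The main delicate step is the Fubini/change-of-variables bookkeeping that produces the quadratic kernel $-\tfrac{s^2}{2}$ together with the sign matching against $\tfrac{1}{2}\Delta''(\sigma)q_0$, where one must correctly track $\partial_z^2 e^{-z\theta}\big|_{z=\sigma} = \theta^2 e^{-\sigma\theta}$ and the overall minus sign inherited from the integral in \eqref{eq:Delta(z)q}. Everything else is a direct replay of \cref{prop:pairing} supplemented by the adjoint relation \eqref{eq:curlyadjoint}.
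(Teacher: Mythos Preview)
Your proposal is correct and follows essentially the same approach as the paper: the same invocation of \cref{thm:eigenfunctions} and \cref{thm:adjoint eigenfunctions2} for the explicit formulas, the same adjoint-shift via \eqref{eq:curlyadjoint} for the vanishing of $\langle\varphi^\odot,\varphi_0\rangle_T$, the same Fubini/change-of-variables computation modeled on \cref{prop:pairing} for the second pairing, and the same $\partial_\theta$-decomposition for the third identity. Your non-vanishing argument via the invertibility of the $2\times 2$ pairing matrix $M$ is in fact slightly more complete than the paper's, which asserts directly that if $\langle\varphi^\odot,\varphi_1\rangle_T = 0$ then $\varphi_1$ lies in the annihilator of $\mathcal{N}((\sigma I-\mathcal{A}^\star)^2)$ without explicitly checking the pairing against the adjoint generalized eigenfunction $\varphi_0^\odot$; your determinant argument handles this cleanly.
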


\subsection{Solvability of periodic linear operator equations} \label{subsec:solvability}
When computing the critical normal form coefficients in \cref{sec:periodic normalization} using \eqref{eq:homologicalequation}, we will encounter periodic linear operator equations of the form
\begin{equation} \label{eq:solvability}
    (zI - \mathcal{A}^{\odot \star})(v_0,v) = (w_0,w),
\end{equation}
where $z \in \mathbb{C}$, $(w_0,w) \in C_T(\mathbb{R},X^{\odot \star})$ is given, and $(v_0,v) \in \mathcal{D}(\mathcal{A}^{\odot \star})$ is the unknown. We only discuss here the solvability of $T$-periodic linear operator equations, as the solvability for $T$-antiperiodic linear operator equations is treated analogously. In general, both $z$ and the right-hand side of \eqref{eq:solvability} will have a nontrivial imaginary part and so it is necessary to regard \eqref{eq:solvability} as the complexification of the original periodic operator equations, see \cref{remark:complexification}. Before we discuss solvability of \eqref{eq:solvability}, we need a representation of the resolvent operator of $\mathcal{A}^{\odot \star}$. The autonomous analogue of the following result can be found in \cite[Corollary IV.5.4]{Diekmann1995} or \cite[Lemma 3.3]{Janssens2010}.

\begin{proposition} \label{prop:solvability}
If $z \in \mathbb{C}$ is such that $\Delta(z)$ is invertible, then the resolvent of $\mathcal{A}^{\odot \star}$ at $z$ has the following explicit representation
\begin{equation} \label{eq: v0,v}
    (v_0,v) = (zI - \mathcal{A}^{\odot \star})^{-1} (w_0,w),
\end{equation}
where
\begin{equation} \label{eq:solvabilityv}
    v(\tau)(\theta) = e^{z \theta} v_0(\tau + \theta) + \int_\theta^0 e^{z(\theta -s)} w(\tau + \theta - s)(s) ds, \quad \forall \tau \in \mathbb{R}, \ \theta \in [-h,0],
\end{equation}
and $v_0$ is the unique solution of the periodic linear (inhomogeneous) DDE
\begin{equation} \label{eq:solvabilityu}
    v_0 = \Delta(z)^{-1}\bigg( \tau \mapsto w_0(\tau) + \int_0^h d_2 \zeta(\tau,\theta)\int_{-\theta}^{0} e^{-z(\theta + s)} w(\tau -\theta - s)(s) ds \bigg).
\end{equation}
\begin{proof}
Let us first assume that $(w_0,w) \in C_T^1(\mathbb{R},X^{\odot \star})$. The second component of \eqref{eq: v0,v} is equivalent to solving the first order linear inhomogeneous PDE
\begin{equation*}
\bigg( \frac{\partial}{\partial \tau} - \frac{\partial}{\partial \theta} +z \bigg) v(\tau)(\theta) = w(\tau)(\theta),    
\end{equation*}
which has the solution specified in \eqref{eq:solvabilityv}. The first component of \eqref{eq: v0,v} is equivalent to
\begin{equation*}
    \dot{v}_0(\tau) - \int_{0}^h d_2 \zeta (\tau,\theta)v(\tau)(-\theta) + zv(\tau)(0) = w_0(\tau).
\end{equation*}
Substituting \eqref{eq:solvabilityv} into the equation above yields
\begin{equation} \label{eq:Deltazv0}
    (\Delta(z)v_0)(\tau) = w_0(\tau) + \int_0^h d_2 \zeta(\tau,\theta)\int_{-\theta}^{0} e^{-z(\theta + s)} w(\tau -\theta - s)(s) ds,
\end{equation}
which proves \eqref{eq:solvabilityu}. Let us now show that $(v_0,v) \in \mathcal{D}(\mathcal{A}^{\odot \star})$. Since $(w_0,w) \in C_T^1(\mathbb{R},X^{\odot \star})$, the right-hand side of \eqref{eq:Deltazv0} is $C^1$-smooth and so $v_0$, as a solution of the periodic linear (inhomogeneous) DDE \eqref{eq:Deltazv0}, is at least in $C_T^1(\mathbb{R},\mathbb{C}^n)$. Since $w$ is $C^1$-smooth in the first component, it follows immediately from \eqref{eq:solvabilityv} that $(v_0,v) \in C_T^1(\mathbb{R},X^{\odot \star})$. Let us now prove that $(v_0(\tau),v(\tau)) \in \mathcal{D}(A^{\odot \star}(\tau))$ for all $\tau \in \mathbb{R}$. Notice from \eqref{eq:solvability} that $v(\tau)(0) = v_0(\tau)$ and $v(\tau)$ is clearly in $\Lip([-h,0],\mathbb{R}^n)$ since $(w_0,w) \in C_T^1(\mathbb{R},X^{\odot \star})$ by assumption. Hence, $(v_0,v) \in \mathcal{D}(\mathcal{A}^{\odot \star})$ whenever $(w_0,w) \in C_T^1(\mathbb{R},\mathbb{C}^n)$ and so $C_T^1(\mathbb{R},X^{\odot \star}) \subseteq \mathcal{R}(zI - \mathcal{A}^{\odot \star}) \subseteq C_T(\mathbb{R},X^{\odot \star})$, which proves that $zI - \mathcal{A}^{\odot \star}$ has dense range and $\mathcal{D}(\mathcal{A}^{\odot \star}) \subseteq \mathcal{R}(zI - \mathcal{A}^{\odot \star})$. To show that the resolvent of $ \mathcal{A}^{\odot \star}$ at $z$ is bounded, let $(w_0,w) \in \mathcal{R}(zI-\mathcal{A}^{\odot \star})$ be given and note that $\| v_0 \|_\infty \leq ( {1+ M_z \mbox{TV}_\zeta } ) \| \Delta(z)^{-1} \|  \| w \|_\infty$ with $M_z \coloneqq (1-e^{-h \Re(z)})/\Re(z)$, where we recall that the operator norm $\| \Delta(z)^{-1} \| < \infty$ as $\Delta(z)^{-1}$ is a bounded linear operator on $C_T(\mathbb{R},\mathbb{C}^n)$ since $\Delta(z)$ is closed, recall \cref{lemma:Delta(z)closed}. Then,
\begin{equation*}
    \| (zI-\mathcal{A}^{\odot \star})^{-1} (w_0,w) \|_\infty \leq [\max\{1,e^{-h \Re(z)} \} \| \Delta(z)^{-1} \| ( {1+ M_z \mbox{TV}_\zeta }) + M_z ] \| (w_0,w) \|_\infty,
\end{equation*}
which shows that the resolvent of $\mathcal{A}^{\odot \star}$ at $z$ is a well-defined bounded linear operator.
\end{proof}
\end{proposition}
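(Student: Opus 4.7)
The plan is to decouple \eqref{eq:solvability} componentwise, solve a first-order linear PDE for $v$ by the method of characteristics, and reduce what remains to an inhomogeneous equation of the form $\Delta(z)v_0 = \text{(data)}$ that is uniquely solvable by the invertibility hypothesis.

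Using the explicit action of $A^{\odot\star}(\tau)$ together with the domain condition $v(\tau)(0)=v_0(\tau)$ forced by \eqref{eq:D(Asuns)}, the equation \eqref{eq:solvability} splits into
\begin{equation*}
\dot v_0(\tau) + zv_0(\tau) - L(\tau)v(\tau) = w_0(\tau),\qquad
\dot v(\tau)(\theta) - \partial_\theta v(\tau)(\theta) + zv(\tau)(\theta) = w(\tau)(\theta).
\end{equation*}
The second equation is a first-order linear inhomogeneous PDE whose characteristics are the lines $\tau+\theta=\mathrm{const}$; with integrating factor $e^{z\theta}$ and the boundary data $v(\tau)(0)=v_0(\tau)$, integrating backward from $\theta=0$ down to the given $\theta\in[-h,0]$ produces exactly formula \eqref{eq:solvabilityv}. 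Substituting the resulting expression for $v(\tau)(-\theta)$ into $L(\tau)v(\tau) = \int_0^h d_2\zeta(\tau,\theta)v(\tau)(-\theta)$ and regrouping using the definition \eqref{eq:Delta(z)q} of the characteristic operator turns the first-component equation into \eqref{eq:solvabilityu}. Invertibility of $\Delta(z)$ determines $v_0$ uniquely, after which $v$ is given by \eqref{eq:solvabilityv}; in particular, $(w_0,w)=0$ forces $v_0=v=0$, which is the injectivity of $zI-\mathcal{A}^{\odot\star}$.

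It then remains to verify the density, range, and boundedness conditions built into the definition of $\rho(\mathcal{A}^{\odot\star})$ recalled in \cref{subsec: Jordan chains via characteristic operators}. Restricting temporarily to data $(w_0,w)\in C_T^1(\mathbb{R},X^{\odot\star})$, the right-hand side of \eqref{eq:solvabilityu} is $C^1$-smooth in $\tau$; since $\Delta(z)^{-1}$ maps $C_T(\mathbb{R},\mathbb{C}^n)$ into its domain $C_T^1(\mathbb{R},\mathbb{C}^n)$, the resulting $v_0$ is itself $C^1$, and then \eqref{eq:solvabilityv} delivers $v\in C_T^1(\mathbb{R},X^{\odot\star})$ with values in $\Lip([-h,0],\mathbb{R}^n)$ and the matching condition $v(\tau)(0)=v_0(\tau)$. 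This places $(v_0,v)$ in $\mathcal{D}(\mathcal{A}^{\odot\star})$ and yields $C_T^1(\mathbb{R},X^{\odot\star})\subseteq \mathcal{R}(zI-\mathcal{A}^{\odot\star})$, hence dense range and the inclusion $\mathcal{D}(\mathcal{A}^{\odot\star})\subseteq \mathcal{R}(zI-\mathcal{A}^{\odot\star})$. Boundedness of the resolvent then follows from the standard Riemann--Stieltjes estimate involving $\mbox{TV}_\zeta$ applied to \eqref{eq:solvabilityu} combined with the obvious bound on \eqref{eq:solvabilityv}; here $\|\Delta(z)^{-1}\|<\infty$ because $\Delta(z)$ is closed by \cref{lemma:Delta(z)closed} and bijective by hypothesis, so the closed graph theorem applies.

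The main obstacle will be the bookkeeping in the substitution step: after inserting \eqref{eq:solvabilityv} into $L(\tau)v(\tau)$, the resulting double integral over $(\theta,s)$ must be reorganised so that the $v_0$-terms reassemble exactly into the action of $\Delta(z)$ while the $w$-terms become the precise inhomogeneity appearing in \eqref{eq:solvabilityu}. A secondary subtlety is the regularity bootstrap needed to place $(v_0,v)$ in $\mathcal{D}(\mathcal{A}^{\odot\star})$, which is why one first restricts to $C^1$-smooth right-hand sides and only afterwards extends by density together with the uniform resolvent bound.
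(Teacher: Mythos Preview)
Your proposal is correct and follows essentially the same route as the paper's proof: componentwise splitting into a first-order PDE solved by characteristics plus a scalar equation that reduces to $\Delta(z)v_0=\text{(data)}$, followed by the same $C^1$-regularity bootstrap to land in $\mathcal{D}(\mathcal{A}^{\odot\star})$, the same density argument $C_T^1(\mathbb{R},X^{\odot\star})\subseteq\mathcal{R}(zI-\mathcal{A}^{\odot\star})$, and the same $\mbox{TV}_\zeta$-based resolvent bound. One small cross-reference slip: the matching condition $v(\tau)(0)=v_0(\tau)$ comes from the description of $\mathcal{D}(A^{\odot\star}(\tau))$, not from \eqref{eq:D(Asuns)} (which is $\mathcal{D}(A^{\odot}(s))$).
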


To identify the spectrum of $\mathcal{A}^{\odot \star}$, let us first recall from \cite[Proposition 21]{Article1} that
\begin{equation} \label{eq:pointspectracurly}
    \sigma_p(\mathcal{A}) = \sigma_p(\mathcal{A}^\star) = \sigma_p(\mathcal{A}^\odot) = \sigma_p(\mathcal{A}^{\odot \star}) =\{ \sigma \in \mathbb{C} : \sigma \mbox{ is a Floquet exponent} \}.
\end{equation}

\begin{corollary} \label{cor:equivalencefloquet}
The spectrum of $\mathcal{A}^{\odot \star}$ consists of point spectrum only and is given by
\begin{equation} \label{eq:spectraequivalent}    
\sigma(\mathcal{A}^{\odot \star}) = \{ z \in \mathbb{C} : \Delta(z) \mbox{ is not invertible} \} =\{ \sigma \in \mathbb{C} : \sigma \mbox{ is a Floquet exponent} \}.
\end{equation}
\end{corollary}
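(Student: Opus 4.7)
The plan is to chain together three results already in hand: the solvability representation for the resolvent of $\mathcal{A}^{\odot\star}$, the characterization of invertibility of $\Delta(z)$ in terms of Floquet exponents, and the identification of the point spectrum of $\mathcal{A}^{\odot\star}$ with the set of Floquet exponents. No new computation is required; the corollary is a synthesis step.

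First I would establish the upper inclusion $\sigma(\mathcal{A}^{\odot\star}) \subseteq \{z \in \mathbb{C} : \Delta(z)\text{ is not invertible}\}$. Fix $z \in \mathbb{C}$ with $\Delta(z)$ invertible. Then \cref{prop:solvability} produces, for every right-hand side $(w_0,w) \in C_T^1(\mathbb{R},X^{\odot\star})$, a unique preimage $(v_0,v) \in \mathcal{D}(\mathcal{A}^{\odot\star})$ satisfying \eqref{eq: v0,v}, and the final estimate in its proof shows that $(zI - \mathcal{A}^{\odot\star})^{-1}$ extends to a bounded linear operator on its range, which is dense in $C_T(\mathbb{R},X^{\odot\star})$. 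Thus $z \in \rho(\mathcal{A}^{\odot\star})$, establishing the contrapositive inclusion. Next, by \cref{lemma:Delta(z)closed} (specifically equation \eqref{eq:not inv is floquet}), the middle and right-hand sets in \eqref{eq:spectraequivalent} coincide.

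Finally I would close the loop using \eqref{eq:pointspectracurly}, which states that the point spectrum of $\mathcal{A}^{\odot\star}$ is precisely the set of Floquet exponents. Combining the two inclusions
\[
\sigma_p(\mathcal{A}^{\odot\star}) \subseteq \sigma(\mathcal{A}^{\odot\star}) \subseteq \{z \in \mathbb{C} : \Delta(z)\text{ is not invertible}\} = \{\sigma \in \mathbb{C} : \sigma\text{ is a Floquet exponent}\} = \sigma_p(\mathcal{A}^{\odot\star}),
\]
forces all inclusions to be equalities. This simultaneously proves \eqref{eq:spectraequivalent} and shows that $\sigma(\mathcal{A}^{\odot\star}) = \sigma_p(\mathcal{A}^{\odot\star})$, i.e., the spectrum is purely point spectrum.

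There is no serious obstacle: the only mild subtlety is making sure that \cref{prop:solvability}, whose detailed construction is carried out for right-hand sides in the dense subspace $C_T^1(\mathbb{R},X^{\odot\star})$, genuinely delivers a bounded inverse on all of $C_T(\mathbb{R},X^{\odot\star})$ so that $z \in \rho(\mathcal{A}^{\odot\star})$ by definition of the resolvent set. Since $\mathcal{A}^{\odot\star}$ is closable and the resolvent estimate from that proposition is uniform in $(w_0,w)$, the bounded extension exists, and by the closed graph theorem (or directly by density and the uniform bound) the range of $zI - \mathcal{A}^{\odot\star}$ is the whole space, giving $z \in \rho(\mathcal{A}^{\odot\star})$ as required.
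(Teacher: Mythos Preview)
Your proof is correct and follows essentially the same chain of inclusions as the paper: $\sigma(\mathcal{A}^{\odot\star}) \subseteq \{z : \Delta(z)\text{ not invertible}\} = \{\text{Floquet exponents}\} = \sigma_p(\mathcal{A}^{\odot\star}) \subseteq \sigma(\mathcal{A}^{\odot\star})$, invoking \cref{prop:solvability}, \cref{lemma:Delta(z)closed}, and \eqref{eq:pointspectracurly} at the same places. Your closing worry is unnecessary under the paper's definition of the resolvent set (dense range and bounded inverse on that range suffice, and \cref{prop:solvability} already establishes both), so no extension argument is needed.
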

\begin{proof}
Note that $\sigma(\mathcal{A}^{\odot \star}) \subseteq \{ z \in \mathbb{C} : \Delta(z) \mbox{ is not invertible} \} = \{\sigma \in \mathbb{C} : \sigma \mbox{ is a Floquet exponent} \} = \sigma_p(\mathcal{A}^{\odot \star})$, where the first inclusion follows from \cref{prop:solvability}, the first equality from \cref{lemma:Delta(z)closed} and the second equality from \eqref{eq:pointspectracurly}. However, as $\sigma_p(\mathcal{A}^{\odot \star}) \subseteq \sigma(\mathcal{A}^{\odot \star}),$ the proof is complete. 
\end{proof}

\begin{corollary} \label{cor:spectra}
The spectra of $\mathcal{A}, \mathcal{A}^\star, \mathcal{A}^\odot$ and $\mathcal{A}^{\odot \star}$ consist of point spectrum only
\begin{equation*} \label{eq:pointspectra}
    \sigma(\mathcal{A}) = \sigma(\mathcal{A}^\star) = \sigma(\mathcal{A}^\odot) = \sigma(\mathcal{A}^{\odot \star}) = \{ \sigma \in \mathbb{C} : \sigma \mbox{ is a Floquet exponent} \},
\end{equation*}
which relates to the (adjoint) characteristic operator as
\begin{alignat*}{2}
    \sigma(\mathcal{A}) &= \{ \sigma \in \mathbb{C} : \Delta(\sigma) \mbox{ is not invertible} \}  &&= \{ \sigma \in \mathbb{C} : \sigma \mbox{ is a characteristic value of } \Delta \} \\
    &= \{ \sigma \in \mathbb{C} : \Delta^\star(\sigma) \mbox{ is not invertible} \}  &&= \{ \sigma \in \mathbb{C} : \sigma \mbox{ is a characteristic value of } \Delta^\star \}.
\end{alignat*}
\end{corollary}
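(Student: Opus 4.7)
The starting point is \cref{cor:equivalencefloquet}, which gives $\sigma(\mathcal{A}^{\odot \star}) = \{\sigma \in \mathbb{C} : \sigma \text{ is a Floquet exponent}\}$ and identifies this with the point spectrum of $\mathcal{A}^{\odot \star}$. Combined with \eqref{eq:pointspectracurly}, all four point spectra coincide and equal the Floquet exponents. The only work remaining is therefore (i) to upgrade ``$\sigma_p = $ Floquet exp.'' to ``$\sigma = $ Floquet exp.'' for each of $\mathcal{A}, \mathcal{A}^\star, \mathcal{A}^\odot$, and (ii) to identify this common set with the sets built from $\Delta$ and $\Delta^\star$.

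For (i), my plan is to replicate the construction used in the proof of \cref{prop:solvability} for each of the three remaining operators. Take $\mathcal{A}$ first: given $z \in \mathbb{C}$ with $\Delta(z)$ invertible and given $w \in C_T(\mathbb{R}, X)$, I would solve $(zI - \mathcal{A})v = w$ as a first-order linear PDE on $[-h, 0]$, subject to the boundary condition $v(\tau)'(0) = L(\tau) v(\tau)$ coming from $v(\tau) \in \mathcal{D}(A(\tau))$. The general PDE solution takes the form $v(\tau)(\theta) = e^{z\theta} v_0(\tau + \theta) + \int_\theta^0 e^{z(\theta - s)} w(\tau + \theta - s)(s)\, ds$, and plugging into the boundary condition reduces the problem to a single periodic DDE of the type $\Delta(z) v_0 = w(\cdot)(0) + \ldots$, solvable uniquely and with bounded dependence on $w$ whenever $\Delta(z)$ is invertible. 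For $\mathcal{A}^\star$ the dual argument uses $\Delta^\star(z)^{-1}$ and yields an advance differential equation for the trace $v_0$. Finally, $\mathcal{A}^\odot$ is the part of $\mathcal{A}^\star$ inside $C_T(\mathbb{R}, X^\odot)$, so its resolvent is obtained by restriction once one checks that $C_T(\mathbb{R}, X^\odot)$ is invariant under the $\mathcal{A}^\star$-resolvent. In each case this exhibits $z \in \rho(L)$, so $\sigma(L) \subseteq \{z : \Delta(z) \text{ is not invertible}\}$, which equals the set of Floquet exponents by \cref{lemma:Delta(z)closed}, and this matches $\sigma_p(L)$.

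For (ii), the $\Delta$-characterization is just \cref{lemma:Delta(z)closed}. For the $\Delta^\star$-side, combining the adjoint pairing identity \eqref{eq:pairing charac} with the non-degeneracy of $\langle \cdot, \cdot \rangle_T$ (\cref{lemma:nondegenerate}) yields that $\Delta(z)$ is invertible if and only if $\Delta^\star(z)$ is invertible. Hence $\{z : \Delta^\star(z) \text{ not invertible}\} = \{z : \Delta(z) \text{ not invertible}\}$, and \cref{lemma:adjoint charac} identifies these with the characteristic values of $\Delta^\star$.

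The main obstacle I expect is the resolvent construction for $\mathcal{A}^\star$: because $\mathcal{D}(A^\star(\tau))$ is not norm-dense in $X^\star$, the PDE runs forward in $\theta$ rather than backward, producing an advance equation, and careful bookkeeping of the $\NBV$ structure is required to verify that $v(\tau) \in \mathcal{D}(A^\star(\tau))$ for every $\tau$. Conceptually it is dual to \cref{prop:solvability}, but the regularity accounting is noticeably more delicate.
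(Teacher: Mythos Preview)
Your proposal is correct and matches the paper's proof in overall strategy: for $\mathcal{A}$ one replays the resolvent construction of \cref{prop:solvability}, for $\mathcal{A}^\star$ one uses the dual version (which the paper carries out explicitly as \cref{prop:resolventadjoint} in the appendix), and $\mathcal{A}^\odot$ is handled by restriction of the $\mathcal{A}^\star$-resolvent, exactly as you say.

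The one visible difference is in step (ii). The paper establishes $\{\sigma : \Delta(\sigma)\text{ not invertible}\} = \{\sigma : \Delta^\star(\sigma)\text{ not invertible}\}$ by routing through the Floquet exponents: if $\Delta(\sigma)$ is not invertible then $\sigma$ is a Floquet exponent (\cref{lemma:Delta(z)closed}), hence $\sigma \in \sigma_p(\mathcal{A}^\star)$ by \eqref{eq:pointspectracurly}, hence $\Delta^\star(\sigma)$ has a nontrivial kernel by \cref{thm:adjoint eigenfunctions2}; the converse swaps \cref{lemma:Delta(z)closed} for \cref{lemma:adjoint charac} and \cref{thm:adjoint eigenfunctions2} for \cref{thm:eigenfunctions}. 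You instead propose to use the pairing identity \eqref{eq:pairing charac} directly. That route is arguably cleaner, but be careful about the phrasing: the pairing together with nondegeneracy only shows that invertibility (in particular surjectivity) of one operator forces \emph{injectivity} of the other. To close the loop you still need the compact-resolvent fact recorded in \cref{lemma:Delta(z)closed} and \cref{lemma:adjoint charac} (injective $=$ invertible), which you do invoke at the end. So your argument is complete, just with the two ingredients applied in a slightly different order than in the paper.
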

\begin{proof}
Analogously to \cref{prop:solvability}, one can prove that $v = (zI-\mathcal{A})^{-1}w$, where $v$ is defined in \eqref{eq:solvabilityv} with $v_0 = v(0)$ and $w_0 = w(0)$. Hence, it follows from \eqref{eq:pointspectracurly} and a similar proof of \cref{cor:equivalencefloquet} that $\sigma(\mathcal{A}) = \sigma(\mathcal{A}^{\odot \star})$ consists of point spectrum (Floquet exponents) only. We claim that
\begin{equation} \label{eq:invertibleDeltaDeltastar}
    \{ \sigma \in \mathbb{C} : \Delta(\sigma) \mbox{ is not invertible} \} = \{ \sigma \in \mathbb{C} : \Delta^\star(\sigma) \mbox{ is not invertible} \}.
\end{equation}
Let $\sigma \in \mathbb{C}$ be such that $\Delta(\sigma)$ is not invertible. According to \cref{lemma:Delta(z)closed}, $\sigma$ is a Floquet exponent and thus $\sigma \in \sigma_p(\mathcal{A}^\star)$ by \eqref{eq:pointspectracurly}. Hence, $\Delta^\star(\sigma)p=0$ for some $p \in C_T^1(\mathbb{R},\mathbb{C}^{n \star})$ by \cref{thm:adjoint eigenfunctions2} and so $\Delta^\star(\sigma)$ is not injective and thus not invertible. To prove the converse, one uses \cref{lemma:adjoint charac} instead of \cref{lemma:Delta(z)closed} and \cref{thm:eigenfunctions} instead of \cref{thm:adjoint eigenfunctions2}. Furthermore, 
\begin{equation} \label{eq:spectraAstar}
    \sigma(\mathcal{A}^{\star}) \subseteq \{ \sigma \in \mathbb{C} : \Delta^\star(z) \mbox{ is not invertible} \} = \{\sigma \in \mathbb{C} : \sigma \mbox{ is a Floquet exponent} \} = \sigma_p(\mathcal{A}^{\star})
\end{equation}
where the first inclusion follows from a dual version of \cref{prop:solvability}, see also \cref{prop:resolventadjoint} for a full proof. The first equality is obtained from \eqref{eq:not inv is floquet} and \eqref{eq:invertibleDeltaDeltastar}, and the second equality follows from \eqref{eq:pointspectracurly}. The resolvent operator of $\mathcal{A}^\odot$ equals that of $\mathcal{A}^\star$, but is only defined on a smaller space. Therefore, we obtain \eqref{eq:spectraAstar} with $\mathcal{A}^\star$ replaced by $\mathcal{A}^\odot$.
\end{proof}

Solvability of \eqref{eq:solvability} shows us that there are two situations to consider depending on whether or not $z$ is a Floquet exponent. If $z$ is not a Floquet exponent then \cref{prop:solvability} tells us that $zI - \mathcal{A}^{\odot \star}$ has a densely defined bounded inverse, and \eqref{eq:solvability} admits a unique solution $(v_0,v)$ which can be computed explicitly by \cref{prop:solvability}. It turns out in \cref{sec:periodic normalization} that we have to solve \eqref{eq:solvability} for a particular right-hand side $(w_0,w)$ specified in \cref{cor:inverse}. The autonomous analogue of the following result can be found in \cite[Corollary 3.4]{Janssens2010}.
\begin{corollary} \label{cor:inverse}
Suppose that $z$ is not a Floquet exponent. If $(w_0,w) = ( \tau \mapsto w_0(\tau)r^{\odot \star})$, then the unique solution $(v_0,v)$ of \eqref{eq:solvability} can be represented by
    \begin{equation*}
    v(\tau)(\theta) = e^{z \theta} v_0(\tau + \theta), \quad \forall \tau \in \mathbb{R}, \ \theta \in [-h,0], \quad v_0 = \Delta^{-1}(z)w_0.
    \end{equation*}
\end{corollary}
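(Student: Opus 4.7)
The plan is to invoke \cref{prop:solvability} directly with the prescribed special right-hand side and observe that the integral terms vanish identically. First, I would note that since $z$ is not a Floquet exponent, \cref{cor:equivalencefloquet} implies $z \in \rho(\mathcal{A}^{\odot \star})$, so \eqref{eq:solvability} admits a unique solution; moreover, by \cref{lemma:Delta(z)closed} (combined with \eqref{eq:not inv is floquet}) the operator $\Delta(z)$ is invertible, so the hypothesis of \cref{prop:solvability} is met and the formulas \eqref{eq:solvabilityv}--\eqref{eq:solvabilityu} apply.

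Next, I would unpack the right-hand side. Recalling from \cref{remark:DDEperturbation} that $r_i^{\odot \star} = (e_i, 0)$, the assumption $(w_0, w) = (\tau \mapsto w_0(\tau) r^{\odot \star})$ means that for every $\tau \in \mathbb{R}$ one has $(w_0, w)(\tau) = (w_0(\tau), 0) \in \mathbb{R}^n \times L^\infty([0,h], \mathbb{R}^n)$; in particular the second component $w(\tau)(\theta)$ is the zero function of $\theta$. Plugging this into \eqref{eq:solvabilityv} collapses the convolution integral, yielding $v(\tau)(\theta) = e^{z\theta} v_0(\tau + \theta)$ for all $\tau \in \mathbb{R}$ and $\theta \in [-h,0]$. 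Similarly, substituting $w \equiv 0$ into \eqref{eq:solvabilityu} eliminates the double integral term, reducing it to $v_0 = \Delta(z)^{-1} w_0$.

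Finally, uniqueness of $(v_0, v)$ follows from the fact that $zI - \mathcal{A}^{\odot \star}$ is a bijection onto its range by \cref{prop:solvability} together with $z \in \rho(\mathcal{A}^{\odot \star})$. There is essentially no obstacle: the whole statement is a direct specialization of \cref{prop:solvability} to the case when the $L^\infty$-component of the forcing vanishes, so the proof amounts to writing two lines of substitution and citing the previous results.
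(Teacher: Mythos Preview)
Your proposal is correct and matches the paper's approach: the paper states \cref{cor:inverse} without proof, presenting it as an immediate specialization of \cref{prop:solvability} to the case where the $L^\infty$-component of the forcing vanishes. Your write-up simply makes that substitution explicit, which is exactly what is intended.
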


Let us now turn our attention to the solvability of \eqref{eq:solvability} when $z = \sigma$ is a Floquet exponent. Then \eqref{eq:solvability} need not to have a solution. To find a solution, we will use a Fredholm alternative that is suited for periodic linear operator equations of the form \eqref{eq:solvability}.

\begin{proposition}[Fredholm solvability condition] \label{prop:FSC}
Suppose that $z = \sigma$ is a Floquet exponent. If $(v_0,v)$ is a solution of \eqref{eq:solvability} then
\begin{equation} \label{eq:FSC} \tag{FSC}
    \langle (w_0,w), \varphi^\odot \rangle_T = 0,
\end{equation}
where $\varphi^\odot$ is an eigenfunction of $\mathcal{A}^\star$ corresponding to $\sigma$.
\end{proposition}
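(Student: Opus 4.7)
The plan is to reduce the claim directly to the adjoint relation \eqref{eq:curlyadjoint} for the pair $(\mathcal{A}^\odot, \mathcal{A}^{\odot \star})$. Since by hypothesis $(v_0, v) \in \mathcal{D}(\mathcal{A}^{\odot \star})$ and $(w_0, w) = (\sigma I - \mathcal{A}^{\odot \star})(v_0, v)$, I would start by writing
\begin{equation*}
\langle (w_0, w), \varphi^\odot \rangle_T = \sigma \langle (v_0,v), \varphi^\odot \rangle_T - \langle \mathcal{A}^{\odot \star}(v_0,v), \varphi^\odot \rangle_T,
\end{equation*}
where $\varphi^\odot$ is an eigenfunction of $\mathcal{A}^\star$ at $\sigma$ produced by \cref{thm:adjoint eigenfunctions2}. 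If I can legitimately apply the second identity in \eqref{eq:curlyadjoint}, the right-hand side collapses to $\langle (v_0,v), (\sigma I - \mathcal{A}^\odot)\varphi^\odot \rangle_T$, and this vanishes because $\varphi^\odot$ is a null vector of $\sigma I - \mathcal{A}^\odot$ (its action coincides with that of $\sigma I - \mathcal{A}^\star$).

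The one hypothesis that needs justification to license \eqref{eq:curlyadjoint} is that $\varphi^\odot \in \mathcal{D}(\mathcal{A}^\odot)$, not merely in $\mathcal{D}(\mathcal{A}^\star)$. This is exactly the point addressed in the paragraph following \cref{thm:adjoint eigenfunctions2}: the explicit representation \eqref{eq:varphi_i^sun} takes values in $X^\odot$, and in fact in $\mathcal{D}(A^\odot(\tau))$ for every $\tau$, so the adjoint (generalized) eigenfunctions are in $\mathcal{D}(\mathcal{A}^\odot)$. Given this, I would invoke \eqref{eq:curlyadjoint} to rewrite $\langle \mathcal{A}^{\odot \star}(v_0,v), \varphi^\odot \rangle_T = \langle (v_0,v), \mathcal{A}^\odot \varphi^\odot \rangle_T$, use $\mathcal{A}^\odot \varphi^\odot = \mathcal{A}^\star \varphi^\odot = \sigma \varphi^\odot$, and conclude $\langle (w_0, w), \varphi^\odot \rangle_T = 0$.

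The proof is short and largely consists of bookkeeping: the only subtlety is tracking which of the operators $\mathcal{A}^\star, \mathcal{A}^\odot$ the eigenfunction $\varphi^\odot$ belongs to, and that is handled by the domain remark recorded right after \cref{thm:adjoint eigenfunctions2}. No Floquet-specific analysis is needed beyond already knowing (via \eqref{eq:pointspectracurly}) that $\sigma \in \sigma_p(\mathcal{A}^\star) = \sigma_p(\mathcal{A}^\odot)$, so the eigenfunction exists.
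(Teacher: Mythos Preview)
Your proposal is correct and follows essentially the same approach as the paper: substitute \eqref{eq:solvability}, apply the adjoint relation \eqref{eq:curlyadjoint}, and use that $\varphi^\odot$ is an eigenfunction of $\mathcal{A}^\odot$ at $\sigma$. You also correctly single out the only nontrivial point, namely that $\varphi^\odot \in \mathcal{D}(\mathcal{A}^\odot)$, and resolve it by citing the remark following \cref{thm:adjoint eigenfunctions2}, exactly as the paper does.
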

\begin{proof}
It follows immediately from \eqref{eq:pairingT} and \eqref{eq:solvability} that $\langle (w_0,w),\varphi^\odot \rangle_T = \langle (\sigma I-\mathcal{A}^{\odot \star})(v_0,v), \varphi^\odot \rangle_T = \langle (v_0,v), (\sigma I - \mathcal{A}^{\odot}) \varphi^\odot \rangle_T = 0$, since the adjoint eigenfunction $\varphi^\odot$ satisfies $\mathcal{A}^\odot \varphi^\odot = \sigma \varphi^\odot$, recall the observation below \cref{thm:adjoint eigenfunctions2}.
\end{proof}

If $z = \sigma$ is a Floquet exponent and \eqref{eq:solvability} is consistent, then any solution that does exist is not unique. Indeed, we may add to it any arbitrary linear combination of eigenfunctions of $\mathcal{A}^{\odot \star}$. The \emph{bordered operator inverse} $(\sigma I - \mathcal{A}^{\odot \star})^{\inv} : \mathcal{R}(\sigma I - \mathcal{A}^{\odot \star}) \to \mathcal{D}(\mathcal{A}^{\odot \star})$ is used to select a particular solution in a systematic and convenient way. For the case that $\sigma$ is a simple Floquet exponent, it assigns a solution to the extended linear system
\begin{equation} \label{eq:borderedinverse}
    (\sigma I - \mathcal{A}^{\odot \star}) (v_0,v) = (w_0,w), \quad \langle (v_0,v), \varphi^\odot \rangle_T = 0,
\end{equation}
for every $(w_0,w)$ for which \eqref{eq:solvability} is consistent, where $\varphi^\odot$ is an adjoint eigenfunction of $\mathcal{A}^\star$ at $\sigma$. The pairing in \eqref{eq:borderedinverse} may be evaluated explicitly in concrete cases using \eqref{eq:Xsunstarpairing}. This will be done many times in \cref{sec:periodic normalization} when we apply \eqref{eq:FSC} to specific periodic linear operator equations of the form presented in \cref{prop:borderedinversemu}. The autonomous analogue of the following result can be found in \cite[Proposition 3.6]{Janssens2010}.

\begin{proposition} \label{prop:borderedinversemu}
Suppose that $z = \sigma$ is a simple Floquet exponent and assume that \eqref{eq:solvability} is consistent for a given $(w_0,w)$. Let $q$ be a null function of $\Delta(\sigma)$ with associated eigenfunction $\varphi$ and let $p$ be a null function of $\Delta^\star(\sigma)$ with associated adjoint eigenfunction $\varphi^\odot$ normalized to $\langle \varphi^\odot,\varphi \rangle_T = 1$. Then a solution $(v_0,v)$ of \eqref{eq:solvability} is given by
\begin{equation} \label{eq:solvabilityv2}
    v(\tau)(\theta) = e^{\sigma \theta} v_0(\tau + \theta) + \int_\theta^0 e^{\sigma(\theta -s)} w(\tau + \theta - s)(s) ds, \quad \forall \tau \in \mathbb{R}, \ \theta \in [-h,0],
\end{equation}
where $v_0 = \xi + \mu q$ with $\xi \in C_T^1(\mathbb{R},\mathbb{C}^n)$ and $\mu \in \mathbb{C}$ are given by
\begin{align*}
    \xi &= \Delta(\sigma)^{\inv}\bigg( \tau \mapsto w_0(\tau) + \int_0^h d_2 \zeta(\tau,\theta)\int_{0}^{\theta} e^{-\sigma s} w(\tau - s)(s-\theta) ds \bigg), \\
    \mu &= - \langle p,\Delta'(\sigma)\xi \rangle_T \\
    &- \int_0^T \int_0^h \bigg(\int_\theta^h e^{-\sigma s}p(s+\tau-\theta) d_2\zeta(\tau + s - \theta,s) \bigg) \bigg(\int_{- \theta}^0 e^{-\sigma r} w(\tau - \theta - r)(r) dr \bigg) d\theta d\tau.
\end{align*}

\begin{proof}
Regarding the bordered operator inverse, we have to find a solution of \eqref{eq:borderedinverse} and so the representation for $v$, specified in \eqref{eq:solvabilityv2}, follows. Since $\sigma$ is simple, the null space of $\Delta(\sigma)$ is spanned by $q$, see \eqref{eq:q_i}, and therefore $v_0 = \xi + \mu q$ for some $\mu \in \mathbb{C}$. The value of $\mu$ is determined by the second requirement of \eqref{eq:borderedinverse}. According to \cref{thm:adjoint eigenfunctions2} and the formula for the duality pairing \eqref{eq:Xsunstarpairing}, we have
\begin{align*}
    \langle (v_0,v), \varphi^\odot \rangle_T &= \int_0^T p(\tau) v_0(\tau)  + \int_0^h \int_{\theta}^h e^{\sigma(\theta -s)}p(s+ \tau - \theta) d_2 \zeta(\tau  + s - \theta,s) v_0(\tau - \theta) d\theta d \tau \\
    &\hspace{-1.2cm}+ \int_0^T \int_0^h \bigg( \int_\theta^h e^{-\sigma s} p(\tau + s - \theta) d_2 \zeta(\tau  + s - \theta,s) \bigg) \bigg( \int_{-\theta}^0 e^{- \sigma r} w(\tau - \theta -r)(r) dr \bigg) d\theta d \tau.
\end{align*}
Let us denote the third term on the right-hand side by $I$. Using similar arguments as in the proof of \cref{prop:pairing}, we obtain for the first two terms
\begin{equation*}
    \int_0^T p(\tau) v_0(\tau)  + \int_0^h \int_{\theta}^h e^{\sigma(\theta -s)}p(s+ \tau - \theta) d_2 \zeta(\tau  + s - \theta,s) v_0(\tau - \theta) d\theta d \tau = \langle p, \Delta '(\sigma) v_0 \rangle_T.
\end{equation*}
Requiring $\langle (v_0,v), \varphi^\odot \rangle_T = 0$ implies that $\int_0^T p(\tau) \Delta '(\sigma)(\xi + \mu q) d\tau + I = 0$. Since $ \langle p, \Delta '(\sigma)q \rangle_T = \langle \varphi^\odot, \varphi \rangle_T = 1$ by assumption, it follows that $\mu = - \langle p, \Delta'(\sigma)\xi \rangle_T - I$ which is the desired result.
\end{proof}
\end{proposition}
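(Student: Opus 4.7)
The plan is to solve the extended system \eqref{eq:borderedinverse} directly, exploiting the PDE structure inherited from $A^{\odot\star}(s)$ in the same spirit as \cref{prop:solvability}, and then pin down the free scalar parameter by enforcing the orthogonality condition against the adjoint eigenfunction.

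First I would handle the second component of $(\sigma I - \mathcal{A}^{\odot\star})(v_0,v) = (w_0,w)$. Recalling $\mathcal{D}(A^{\odot\star}(\tau))$ and the action $A^{\odot\star}(\tau)j\varphi = (L(\tau)\varphi,\varphi')$, this component reads $\bigl(\tfrac{\partial}{\partial\tau} - \tfrac{\partial}{\partial\theta} + \sigma\bigr) v(\tau)(\theta) = w(\tau)(\theta)$ together with the compatibility $v(\tau)(0)=v_0(\tau)$. Integrating along characteristics yields precisely \eqref{eq:solvabilityv2}, giving $v$ as an explicit functional of the unknown $v_0$. This step is essentially a verbatim repeat of the PDE argument in \cref{prop:solvability}, so it should be routine.

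Next I would substitute \eqref{eq:solvabilityv2} into the first component $\dot v_0(\tau) + \sigma v_0(\tau) - L(\tau)v(\tau) = w_0(\tau)$, using \eqref{eq:L(t)varphi}. After interchanging the order of integration between the $d_2\zeta$-integral and the inner convolution in $s$ (Fubini for Riemann-Stieltjes integrals against a continuous factor), the equation collapses to a periodic linear inhomogeneous DDE of the form $\Delta(\sigma) v_0 = \tau\mapsto w_0(\tau) + \int_0^h d_2\zeta(\tau,\theta) \int_0^\theta e^{-\sigma s} w(\tau-s)(s-\theta)\,ds$. The consistency hypothesis on $(w_0,w)$ guarantees that the right-hand side lies in $\mathcal{R}(\Delta(\sigma))$, so applying the bordered inverse $\Delta(\sigma)^{\inv}$ produces the particular solution $\xi$ as stated. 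Since $\sigma$ is simple, $\mathcal{N}(\Delta(\sigma)) = \operatorname{span}\{q\}$ by \eqref{eq:q_i} together with the multiplicity assumption, hence the general solution has the form $v_0 = \xi + \mu q$ for a unique $\mu\in\mathbb{C}$.

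The remaining and most delicate step is to determine $\mu$ from $\langle (v_0,v),\varphi^\odot\rangle_T = 0$. I would insert \eqref{eq:solvabilityv2} for $v$ and \eqref{eq:adjointsimple} for $\varphi^\odot$ into the duality pairing \eqref{eq:Xsunpairing}. The integrand naturally decomposes into two pieces: a $v_0$-part (coming from both $v_0(\tau+\theta)$ inside $v$ and the $p(\tau)$-row of $\varphi^\odot$) and a $w$-part. For the $v_0$-part, I would mimic the change of variables $\tau = u+v$, $\theta = v$ and the Fubini argument used in the proof of \cref{prop:pairing} to collapse it to $\langle p, \Delta'(\sigma) v_0\rangle_T$. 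The $w$-part yields directly the double integral written in the statement. Imposing vanishing of the full pairing and using the normalization $\langle p, \Delta'(\sigma)q\rangle_T = \langle \varphi^\odot,\varphi\rangle_T = 1$ produces the stated formula for $\mu$. The main obstacle is bookkeeping in this last computation: the Riemann-Stieltjes integrals against $d_2\zeta(\tau+s-\theta,s)$ mix the $\tau$- and $\theta$-dependence nontrivially, and recognizing the result as $\Delta'(\sigma)v_0$ requires the same periodicity-plus-Fubini trick that drives \cref{prop:pairing}; handling this carefully is where a proof can easily slip.
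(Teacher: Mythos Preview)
Your proposal is correct and follows essentially the same approach as the paper's proof: derive \eqref{eq:solvabilityv2} from the PDE component as in \cref{prop:solvability}, reduce the first component to $\Delta(\sigma)v_0 = \cdots$ so that $v_0 = \xi + \mu q$, and then fix $\mu$ by expanding $\langle (v_0,v),\varphi^\odot\rangle_T$ via \eqref{eq:Xsunpairing} and \eqref{eq:adjointsimple}, recognizing the $v_0$-contribution as $\langle p,\Delta'(\sigma)v_0\rangle_T$ through the same periodicity-and-Fubini computation as in \cref{prop:pairing}. The paper's write-up is terser (it simply cites \eqref{eq:borderedinverse} and \cref{prop:pairing} for the identifications you spell out), but the logical structure and all key steps coincide.
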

We observe that the expression for $\xi$ in \cref{prop:borderedinversemu} involves a bordered operator inverse $\Delta(\sigma)^{\inv} : \mathcal{R}(\Delta(\sigma)) \to C_T^1(\mathbb{R},\mathbb{C}^n)$ which assigns a unique solution to the extended linear system
\begin{equation*}
    \Delta(\sigma)v = w, \quad \langle p,v\rangle_T = 0,
\end{equation*}
to every $w \in C_T^1(\mathbb{R},\mathbb{C}^n)$ for which $\Delta(\sigma)v = w$ is consistent, where $p$ is a null function of $\Delta^\star(\sigma)$. This notation was also used in \cite{Bosschaert2020,Bosschaert2024a,Janssens2010} for the bordered matrix inverse of the characteristic matrix. The properties of
bordered linear systems and their role in numerical bifurcation analysis are discussed more extensively in \cite{Keller1987,Kuznetsov2023a} and \cite[Chapter 3]{Govaerts2000}.

We shall encounter in \cref{sec:periodic normalization} the following special case, for which its autonomous analogue can be found in \cite[Corollary 3.7]{Janssens2010} or \cite[Lemma 4]{Bosschaert2020}.

\begin{corollary} \label{cor:borderedinversemu}
Suppose in addition to \cref{prop:borderedinversemu} that $(w_0,w) = (\eta,0) + \kappa(q,\varphi)$, where $\kappa \in \mathbb{C}$ and $\eta \in C_T(\mathbb{R},\mathbb{C}^n)$. Then,
\begin{equation*}
    v(\tau)(\theta) = e^{\sigma \theta}(v_0(\tau + \theta) - \kappa \theta q(\tau+\theta)), \quad \forall \tau \in \mathbb{R}, \ \theta \in [-h,0], 
\end{equation*}
where $v_0 = \xi + \mu q$ with $\xi = \Delta(\sigma)^{\inv} [ \eta + \kappa \Delta '(\sigma) q ]$ and $ \mu = -\langle p, \Delta'(\sigma)\xi \rangle_T - \frac{\kappa}{2} \langle p, \Delta''(\sigma)q \rangle_T$.
\begin{proof}
We note that $\eta$ and $\kappa$ are uniquely determined for a given right-hand side $(w_0,w)$. The result follows now directly from \cref{prop:borderedinversemu}. The formula for $\mu$ is obtained by applying Fubini's theorem as performed in the proof of \cref{prop:pairing}.
\end{proof}
\end{corollary}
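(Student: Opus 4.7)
The plan is to apply \cref{prop:borderedinversemu} with the concrete right-hand side $(w_0,w) = (\eta + \kappa q,\,\kappa\varphi)$ and exploit the explicit formula $\varphi(\tau)(\theta) = e^{\sigma\theta} q(\tau+\theta)$ to collapse every inner integral into a closed form. Note first that the decomposition $(w_0,w) = (\eta,0) + \kappa(q,\varphi)$ determines $\eta \in C_T(\mathbb{R},\mathbb{C}^n)$ and $\kappa \in \mathbb{C}$ uniquely for any given right-hand side, so it is enough to evaluate each ingredient in \cref{prop:borderedinversemu} at this particular choice.

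For the formula for $v$, I would substitute $w(\tau+\theta-s)(s) = \kappa e^{\sigma s}\,q(\tau+\theta)$ into \eqref{eq:solvabilityv2}. The exponentials cancel against the kernel $e^{\sigma(\theta-s)}$, leaving $\kappa e^{\sigma\theta} q(\tau+\theta)\int_\theta^0 ds = -\kappa\theta e^{\sigma\theta} q(\tau+\theta)$, which combined with the $e^{\sigma\theta}v_0(\tau+\theta)$ term yields exactly the claimed expression for $v(\tau)(\theta)$.

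For the formula for $\xi$, I would work on the argument of $\Delta(\sigma)^{\inv}$ in \cref{prop:borderedinversemu}. Here $w(\tau-s)(s-\theta) = \kappa e^{\sigma(s-\theta)}\,q(\tau-\theta)$, and integrating over $s \in [0,\theta]$ gives $\kappa\theta e^{-\sigma\theta} q(\tau-\theta)$. The outer Riemann--Stieltjes integral against $d_2\zeta(\tau,\theta)$ then produces $\kappa\int_0^h d_2\zeta(\tau,\theta)\,\theta e^{-\sigma\theta} q(\tau-\theta)$, which combined with the $\kappa q(\tau)$ coming from $w_0$ reassembles as $\kappa(\Delta'(\sigma)q)(\tau)$ by differentiating the defining formula \eqref{eq:Delta(z)q} once with respect to $z$ at $z = \sigma$. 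Applying $\Delta(\sigma)^{\inv}$ gives $\xi = \Delta(\sigma)^{\inv}[\eta + \kappa\Delta'(\sigma)q]$.

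The main obstacle lies in the formula for $\mu$: the term $-\langle p, \Delta'(\sigma)\xi\rangle_T$ carries through unchanged, so the issue is to show that the double integral in \cref{prop:borderedinversemu} evaluates to $-\tfrac{\kappa}{2}\langle p, \Delta''(\sigma) q\rangle_T$. The inner piece simplifies at once to $\int_{-\theta}^0 e^{-\sigma r}\kappa e^{\sigma r} q(\tau-\theta)\, dr = \kappa\theta q(\tau-\theta)$, leaving a triple integral in $(\tau,\theta,s)$ with Lebesgue measure in $\tau,\theta$ and Riemann--Stieltjes in $s$ over the region $0 \le \theta \le s \le h$. I would substitute $u = \tau-\theta$ and use $T$-periodicity of $p,q$ and $\zeta(\cdot,s)$ to restore the $u$-integration to $[0,T]$, then apply Fubini to interchange $d\theta$ with $d_2\zeta(\cdot,s)$ as in the proof of \cref{prop:pairing}. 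The inner $\theta$-integral $\int_0^s \theta\, d\theta = s^2/2$ produces the weight $s^2$, matching the kernel one obtains from the Taylor coefficient $\tfrac{1}{2}\Delta''(\sigma)q$ read off from \eqref{eq:Delta(z)q}. Identifying the result with $-\tfrac{\kappa}{2}\langle p, \Delta''(\sigma) q\rangle_T$ via \eqref{eq:pairing charac deri} completes the computation; the careful Fubini bookkeeping and the sign tracking against the second Taylor coefficient of $\Delta$ constitute the delicate part.
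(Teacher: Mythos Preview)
Your proposal is correct and follows essentially the same route as the paper: substitute the special right-hand side into \cref{prop:borderedinversemu}, collapse the inner integrals using $\varphi(\tau)(\theta)=e^{\sigma\theta}q(\tau+\theta)$, and handle the double integral for $\mu$ by the change of variables and Fubini argument from the proof of \cref{prop:pairing}. The only caveat is a sign slip in your target for the double integral (you want it to equal $+\tfrac{\kappa}{2}\langle p,\Delta''(\sigma)q\rangle_T$ so that $\mu=-\langle p,\Delta'(\sigma)\xi\rangle_T-\tfrac{\kappa}{2}\langle p,\Delta''(\sigma)q\rangle_T$), which is precisely the bookkeeping you already flagged as delicate.
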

We will use in \cref{sec:periodic normalization} the shorthand notation $v = B_{\sigma}^{\inv}(\eta,\kappa)$ for a solution $(v_0,v)$ of \eqref{eq:solvability} as described in \cref{cor:borderedinversemu}.

\begin{remark} \label{remark:ODE}
In the setting of finite-dimensional ODEs, the above construction was used extensively in \cite{Kuznetsov2005,Witte2013,Witte2014} to study codimension one and two bifurcations of limit cycles of
\begin{equation} \label{eq:ODEremark}
    \dot{x}(t) = f(x(t)),
\end{equation}
where $x(t) \in \mathbb{R}^n$ and the vector field $f : \mathbb{R}^n \to \mathbb{R}^n$ is sufficiently smooth. Since $\mathbb{R}^n$ is reflexive and finite-dimensional, the sun-star calculus construction becomes trivial \cite[Section 1]{Neerven1992} and thus $\mathcal{A} = \mathcal{A}^{\odot \star}$ and $\mathcal{A}^{\star} = \mathcal{A}^\odot$, where the (complexified) densely defined unbounded linear operators $\mathcal{A} : \mathcal{D}(\mathcal{A}) \subseteq  C_T(\mathbb{R},\mathbb{C}^n) \to C_T(\mathbb{R},\mathbb{C}^n)$ and $\mathcal{A}^\star : \mathcal{D}(\mathcal{A}^\star) \subseteq C_T(\mathbb{R},\mathbb{C}^{n \star}) \to C_T(\mathbb{R},\mathbb{C}^{n \star})$ now take the form $(\mathcal{A}\varphi)(\tau) = A(\tau)\varphi(\tau) - \dot{\varphi}(\tau)$ and $(\mathcal{A}^\star\varphi^\star)(\tau) = \varphi^\star(\tau) A^\star(\tau) + \dot{\varphi}^\star(\tau)$, with domains $\mathcal{D}(\mathcal{A}) = C_T^1(\mathbb{R},\mathbb{C}^n)$ and $\mathcal{D}(\mathcal{A}^\star) = C_T^1(\mathbb{R},\mathbb{C}^{n \star})$, where $A(\tau) = A^\star(\tau) = Df(\gamma(\tau)) \in \mathbb{R}^{n \times n}$ and $\gamma$ denotes a (nonhyperbolic) $T$-periodic solution of \eqref{eq:ODEremark}. Recall from \cite[Remark 8 and 15]{Article1} that the unbounded linear operators $\mathcal{A} = \mathcal{A}^{\odot \star}$ and $\mathcal{A}^{\star} = \mathcal{A}^\odot$ are all closed in the finite-dimensional ODE-setting, while all these operators are not closed but closable in the classical DDE-setting. It is more traditional \cite{Iooss1988,Iooss1999,Kuznetsov2005,Kuznetsov2023a,Witte2013,Witte2014} to write the operators $\mathcal{A}$ and $\mathcal{A}^\star$ as $-\frac{d}{d\tau}+A(\tau)$ and $\frac{d}{d\tau}+A^\star(\tau)$, respectively. Strictly speaking, this notation requires a special interpretation as $\frac{d}{d\tau}$ acts on functions in $C_T^1(\mathbb{R},X) \subseteq C_T(\mathbb{R},X)$ and $A(\tau)$ acts on functions in $\mathcal{D}(A(\tau)) \subseteq X$. We are confident that this notation would not create confusion for the reader. Moreover, the periodic linear operator equation \eqref{eq:solvability} reduces to $(\frac{d}{d\tau} - A(\tau) + z) v(\tau) = w(\tau)$, where $w \in C_T(\mathbb{R},\mathbb{C}^n)$ is given and $v \in C_T^1(\mathbb{R},\mathbb{C}^n)$ is the unknown. Recall from \eqref{eq:Xsunpairing} and \eqref{eq:Xsunstarpairing} that the nondegenerate pairings are bilinear maps since the natural duality pairings are bilinear. In the finite-dimensional ODE-setting, one can rely on the Hilbert space structure of $\mathbb{C}^n$ and therefore use sesquilinear maps. As a consequence, $-\sigma$ is an eigenvalue of the operator $\frac{d}{d\tau} + A^\star(\tau)$ instead of $\sigma$, see \cite[Equation (2.34)]{Kuznetsov2005} together with the comment below \cite[Equation (3.27)]{Witte2013} for more information. \hfill $\lozenge$
\end{remark}

\section{Periodic normalization on the center manifold} \label{sec:periodic normalization}
In this section, we derive the explicit formulas for the critical normal form coefficients of all codimension one bifurcations of limit cycles in classical DDEs via the periodic normalization method. The recipe of this method will be discussed in \cref{subsec: periodic normal forms on the center manifold}. A key ingredient here is the \emph{homological equation} \eqref{eq:homologicalequation} which describes the dynamics of \eqref{eq:DDE} on $\mathcal{W}_{\loc}^c(\Gamma)$ in terms of one of the periodic normal forms from \cref{subsec:periodic normal forms}. The preparatory work from \cref{subsec: periodic normal forms on the center manifold} provides us in \cref{subsec:fold}, \cref{subsec:PD} and \cref{subsec:NS} rather compact and explicit computational formulas for the critical normal form coefficients of the fold \eqref{eq:normalformcoeff fold}, period-doubling \eqref{eq:normalformcoeff pd} and Neimark-Sacker bifurcation \eqref{eq:normalformcoeff NS}.

\subsection{Computation of critical coefficients} \label{subsec: periodic normal forms on the center manifold}
Suppose that there are $n_0 + 1$ Floquet multipliers (counted with algebraic multiplicity) on the unit circle. We can assume that a parameterization of the $(n_0 +1)$-dimensional center manifold $\mathcal{W}_{\loc}^c(\Gamma)$ has been selected so that the
restriction of \eqref{eq:DDE} to this invariant manifold has one of the periodic normal forms from \cref{subsec:periodic normal forms}. All these normal forms are of the form
\begin{equation} \label{eq:periodic normal form}
    \begin{dcases}
    \dot{\tau} = 1 + p(\xi) + r(\tau,\xi), \\
    \dot{\xi} = P(\xi) + R(\tau,\xi).
    \end{dcases}
\end{equation}
The maps $p$ and $P$ are polynomials in $\xi \in \mathbb{R}^{n_0}$ of some degree $ 1 \leq N \leq k$ without constant terms, while the functions $r$ and $R$ are $C^k$-smooth and $T$- or $2T$-periodic in $\tau$, see \cite[Section 2]{Witte2013}. Let $I \subseteq \mathbb{R}$ be an interval. If $u: I \to X$ with $u(t) \coloneqq x_t \in \mathcal{W}_{\loc}^{c}(\Gamma)$ for $t \in I$ is a solution of \eqref{eq:DDE}, then it follows from \cite[Theorem VII.6.1]{Diekmann1995} and \cite[Theorem 3.6]{Clement1988} that $u$ is differentiable on $I$ and satisfies the abstract ODE
\begin{equation} \label{eq:hom abstract ODE}
    j\dot{u}(t) = A_0^{\odot \star}ju(t) + G(u(t)), \quad \forall t \in I,
\end{equation}
where $G \in C^{k+1}(X,X^{\odot \star})$ is defined by $G(\varphi) \coloneqq [F(\varphi)]r^{\odot \star}$ for all $\varphi \in X$. According to \cref{thm:eigenfunctions}, we are able to choose a $T$- or $2T$-periodic $C^{k+1}$-smooth ordered basis $\{\dot{\gamma}_\tau,\varphi_1(\tau),\dots,\varphi_{n_0}(\tau) \}$ of the $(n_0+1)$-dimensional center subspace $X_0(\tau)$ for all $\tau \in \mathbb{R}$. It follows from \cite[Theorem 10, 11 and 12]{Article1} that there exists a locally defined $C^k$-smooth parametrization $\mathcal{H} : \mathbb{R} \times \mathbb{R}^{n_0} \to X$ of $\mathcal{W}_{\loc}^c(\Gamma)$ in $(\tau,\xi)$-coordinates, such that the solution $u$ of \eqref{eq:hom abstract ODE} on $\mathcal{W}_{\loc}^c(\Gamma)$ can be written as
\begin{equation} \label{eq:u is H}
    u(t) = \mathcal{H}(\tau(t),\xi(t)), \quad \forall t \in I,
\end{equation}
and the map $\mathcal{H}$ has in $(\tau,\xi)$-coordinates the representation
\begin{equation} \label{eq:Hparametrization}
    \mathcal{H}(\tau,\xi) = \gamma_\tau + \sum_{i=1}^{n_0} \xi_i \varphi_i(\tau) + H(\tau,\xi),
\end{equation}
where the nonlinear map $H$ is $C^k$-smooth. Combining \eqref{eq:hom abstract ODE} with \eqref{eq:u is H} yields the \emph{homological equation}
\begin{equation} \label{eq:homologicalequation} \tag{HOM}
    j\bigg(\frac{\partial \mathcal{H}(\tau,\xi)}{\partial \tau} \dot{\tau} + \frac{\partial \mathcal{H}(\tau,\xi)}{\partial \xi} \dot{\xi} \bigg) = A_{0}^{\odot \star}j(\mathcal{H}(\tau,\xi)) + G(\mathcal{H}(\tau,\xi)),
\end{equation}
where $\dot{\tau}$ and $\dot{\xi}$ are given by the right-hand side of the periodic normal form \eqref{eq:periodic normal form}. The unknowns in \eqref{eq:homologicalequation} are the map $H$, in particular its Taylor coefficients, and the coefficients of the polynomials $p$ and $P$ from \eqref{eq:periodic normal form}. These coefficients are determined up to a certain (finite) order of interest, depending on the type of bifurcation. We expand the nonlinearity $G$ around the cycle $\Gamma$ as
\begin{equation} \label{eq:Gexpand}
    G(\gamma_\tau + \psi(\tau)) = G(\gamma_\tau) + B(\tau)\psi(\tau) + \sum_{j = 2}^{k} \frac{1}{j!}D^{j}F(\gamma_\tau)(\psi(\tau)^{(j)})r^{\odot \star} + \mathcal{O}(\|\psi(\tau)\|_{\infty}^{k+1}),
\end{equation}
for all $\psi \in C_T(\mathbb{R},X)$. The bounded linear perturbation $B$ is specified in \eqref{eq:perturbationB} and the $j$-linear form $D^jF(\gamma_\tau) : X^j \to \mathbb{R}^n$ is the $j$th order Fr\'echet derivative of $F$ evaluated at $\gamma_\tau$ for $j=2,\dots,k$. The mapping $\mathcal{H}$ from \eqref{eq:Hparametrization} can then be expanded as
\begin{equation} \label{eq:expandH}
    \mathcal{H}(\tau,\xi) = \gamma_\tau + \sum_{i=1}^{n_0} \xi_i \varphi_i(\tau) + \sum_{|\mu| = 2}^{k} \frac{1}{\mu!} h_\mu(\tau) \xi^\mu + \mathcal{O}(|\xi|^{k+1}),
\end{equation}
where the last summation represents a Taylor expansion of the map $H$ up to order $k$. Notice that the maps $\tau \mapsto h_\mu(\tau)$ are $T$- or $2T$-periodic, see also \cite[Theorem 10, 11 and 12]{Article1} for more information. Substituting \eqref{eq:Gexpand} and \eqref{eq:expandH} into \eqref{eq:homologicalequation}, collecting coefficients of the terms $\xi^\mu$ from lower to higher order, and solving the resulting periodic linear operator equations, one can solve recursively for the unknown coefficients of the polynomials $p$ and $P$ and the center manifold expansions $h_\mu$ by applying the Fredholm solvability condition \eqref{eq:FSC}, as explained in \cref{subsec:solvability}. Furthermore, to determine the (adjoint) (generalized) eigenfunctions $\varphi_i$ and $\varphi_i^\odot$, and center manifold expansions $h_\mu$ uniquely, we will set up a $T$-(anti)periodic boundary-value problem on $[0,T]$ with an additional normalization or phase condition. The solutions of these functions on $\mathbb{R}$ are then obtained by a trivial $T$-(anti)periodic extension.

This method will be used in the following three subsections to derive the critical normal form coefficients of the fold (\cref{subsec:fold}), period-doubling (\cref{subsec:PD}) and Neimark-Sacker bifurcation (\cref{subsec:NS}). It is sufficient to expand the nonlinearity $G$ around the cycle $\Gamma$, as done in \eqref{eq:Gexpand}, up to the second-order coefficient for the fold bifurcation and to the third-order coefficient for the period-doubling and Neimark-Sacker bifurcation. For simplicity of notation, we will keep the tradition with the literature \cite{Kuznetsov2005,Witte2013,Witte2014} on periodic normalization and simply write
\begin{gather*}
    B(\tau;\psi(\tau),\psi(\tau)) \coloneqq D^2F(\gamma_\tau)(\psi(\tau)^{(2)}),  \quad C(\tau;\psi(\tau),\psi(\tau),\psi(\tau)) \coloneqq D^3F(\gamma_\tau)(\psi(\tau)^{(3)}), \\
    B(\psi_1,\psi_2) \coloneqq (\tau \mapsto B(\tau;\psi_1(\tau),\psi_2(\tau)), \quad C(\psi_1,\psi_2,\psi_3) \coloneqq (\tau \mapsto C(\psi_1(\tau),\psi_2(\tau),\psi_3(\tau))).
\end{gather*}
It is sufficient to take $k + 1 \geq 2$ for the fold bifurcation and $k + 1 \geq 3$ for the period-doubling and Neimark-Sacker bifurcation. This condition on $k$ will be assumed from now on. To keep tradition with the mentioned literature, we will work from now on with the notation from \cref{remark:ODE}.

\subsection{Fold bifurcation} \label{subsec:fold}
Suppose that \eqref{eq:DDE} has a $T$-periodic solution $\gamma$ with $\Lambda_0 = \{ 1 \},$ where the trivial Floquet multiplier on the unit circle has algebraic multiplicity two and geometric multiplicity one. Hence, there exist $T$-periodic (generalized) eigenfunctions $\varphi_{0,1}$ and an adjoint eigenfunction $\varphi^\odot$ satisfying
\begin{equation*} 
    \bigg( \frac{d}{d\tau} - A(\tau)\bigg)\varphi_1(\tau)=
    -\varphi_0(\tau), \quad \bigg( \frac{d}{d\tau} + A^\star(\tau)\bigg)\varphi^\odot(\tau)=0, \quad \langle \varphi^\odot, \varphi_0 \rangle_T = 0, \quad \langle \varphi^\odot, \varphi_1 \rangle_T = 1,
\end{equation*}
where $\varphi_0(\tau) = \dot{\gamma}_\tau = q_0(\tau+\cdot)$. The (adjoint) (generalized) eigenfunctions are given as in \cref{prop:pairing2} where $\{q_0,q_1\}$ is a $T$-periodic Jordan chain of $\Delta(0)$ and $p$ is a $T$-periodic null function of $\Delta^\star(0)$. To determine $\varphi_1$ and $\varphi^\odot$ uniquely, we also impose that $\langle q_1^\star,q_0 \rangle_T = 0$, where $q_1^\star$ is the Hermitian adjoint of $q_1$. The 2-dimensional center manifold $\mathcal{W}_{\loc}^{c}(\Gamma)$ at the fold bifurcation can be parametrized locally near $\Gamma$ in $(\tau,\xi)$-coordinates as
\begin{equation*}
    \mathcal{H}(\tau,\xi) = \gamma_\tau + \xi \varphi_1(\tau) + \frac{1}{2}h_2(\tau) \xi^2 + \mathcal{O}(\xi^3), \quad \tau \in \mathbb{R}, \ \xi \in \mathbb{R},
\end{equation*}
where $h_2$ is $T$-periodic. As every ingredient in \eqref{eq:homologicalequation} is introduced, we can fill everything into this equation and start comparing coefficients in $\xi^j$ for $j \geq 0$ sufficiently large. When this procedure is carried out, we find for the $\xi^0$- and $\xi^1$-terms in \eqref{eq:homologicalequation} the identities
\begin{equation*}
    \frac{d}{d\tau} j({\gamma}_\tau) = A_0^{\odot \star} j(\gamma_\tau) + G(\gamma_\tau), \quad \bigg(\frac{d}{d \tau} - A^{\odot \star}(\tau) \bigg)j\varphi_1(\tau) = -j\dot{\gamma}_\tau,
\end{equation*}
which was already known. Collecting the $\xi^2$-terms yield
\begin{equation*}
    \bigg(\frac{d}{d \tau} - A^{\odot \star}(\tau) \bigg)jh_2(\tau) = B(\tau;\varphi_1(\tau),\varphi_1(\tau))r^{\odot \star} - 2aj\dot{\gamma}_\tau - 2j\dot{\varphi}_1(\tau) - 2bj\varphi_1(\tau).
\end{equation*}
Since $0$ is a Floquet exponent, the Fredholm solvability condition implies that
\begin{equation*}
    \int_0^T \langle B(\tau;\varphi_1(\tau),\varphi_1(\tau))r^{\odot \star} - 2a j\dot{\gamma}_\tau - 2j\dot{\varphi}_1(\tau) - 2b j\varphi_1(\tau), \varphi^{\odot}(\tau) \rangle d\tau = 0.
\end{equation*}
Using the normalization identities, we obtain
\begin{equation*}
    b = \frac{1}{2} \int_0^T \langle B(\tau;\varphi_1(\tau),\varphi_1(\tau))r^{\odot \star} - 2A^{\odot \star}(\tau)j\varphi_1(\tau), \varphi^{\odot}(\tau) \rangle d\tau.
\end{equation*}
Therefore, the critical coefficient $b$ in the periodic normal form for the fold bifurcation \eqref{eq:normal form fold} has been computed, and it follows from \cref{prop:pairing2} that
\begin{equation}  \label{eq:normalformcoeff fold}
    b = \frac{1}{2} \langle p, B(\varphi_1,\varphi_1) - 2 \Delta'(0)\dot{q}_1 - \Delta''(0)\dot{q}_0 \rangle_T.
\end{equation}

\subsection{Period-doubling bifurcation} \label{subsec:PD}
Suppose that \eqref{eq:DDE} has a $T$-periodic solution $\gamma$ with $\Lambda_0 = \{ \pm 1 \},$ where all Floquet multipliers on the unit circle are simple. Hence, there exist $T$-antiperiodic (adjoint) eigenfunctions $\varphi$ and $\varphi^\odot$ satisfying
\begin{equation*} 
    \bigg( \frac{d}{d\tau} - A(\tau)\bigg)\varphi(\tau)=
    0, \quad \bigg( \frac{d}{d\tau} + A^\star(\tau)\bigg)\varphi^\odot(\tau)=0, \quad \langle \varphi^\odot, \varphi \rangle_T = 1.
\end{equation*}
The (adjoint) eigenfunctions are given as in \cref{prop:pairing}, where $q$ is a $T$-antiperiodic null function of $\Delta(0)$ and $p$ is a $T$-antiperiodic null function of $\Delta^\star(0)$. To determine  $\varphi$ and $\varphi^\odot$ uniquely, we also impose that $\langle q^{\star},q \rangle_T = 1$, where $q^\star$ is the Hermitian adjoint of $q$. The two-dimensional critical center manifold $\mathcal{W}_{\loc}^{c}(\Gamma)$ at the period-doubling bifurcation can be parametrized locally near $\Gamma$ in $(\tau,\xi)$-coordinates as
\begin{equation} \label{eq:H of PD}
    \mathcal{H}(\tau,\xi) = \gamma_\tau + \xi \varphi(\tau) + \frac{1}{2}h_2(\tau) \xi^2 + \frac{1}{6}h_3(\tau) \xi^3 + \mathcal{O}(\xi^4), \quad \tau \in \mathbb{R}, \ \xi \in \mathbb{R},
\end{equation}
where $h_j$ is $2T$-periodic. Since we are at the period-doubling point, it follows from \cite[Theorem 12]{Article1} that $\mathcal{H}(\tau,\xi) = \mathcal{H}(\tau+T,-\xi)$, meaning that $h_2$ is $T$-periodic and $h_3$ is $T$-antiperiodic. Collecting the $\xi^0$- and $\xi^1$-terms in \eqref{eq:homologicalequation} yield
\begin{equation*}
    \frac{d}{d\tau} j(\gamma_\tau) = A_0^{\odot \star} j(\gamma_\tau) + G(\gamma_\tau), \quad \bigg(\frac{d}{d \tau} - A^{\odot \star}(\tau) \bigg)j\varphi(\tau) = 0,
\end{equation*}
which was already known. Collecting the $\xi^2$-terms yield 
\begin{equation} \label{eq:PDxi^2terms}
    \bigg(\frac{d}{d \tau} - A^{\odot \star}(\tau) \bigg) jh_2(\tau) = B(\tau;\varphi(\tau),\varphi(\tau))r^{\odot \star} - 2aj\dot{\gamma}_\tau.
\end{equation}
Since $0$ is a Floquet exponent in the $T$-periodic setting, the Fredholm solvability condition tells us that
\begin{equation*}
    \int_0^{T} \langle B(\tau; \varphi(\tau),\varphi(\tau))r^{\odot \star} - 2a j\dot{\gamma}_\tau, \psi^{\odot}(\tau) \rangle  d\tau = 0, \quad \bigg(\frac{d}{d \tau} + A^\star(\tau) \bigg)\psi^{\odot}(\tau) = 0, \quad \langle \psi^\odot, \varphi_0 \rangle_T = 1,
\end{equation*}
where $\varphi_0(\tau) = \dot{\gamma}_\tau = q_0(\tau + \cdot)$. The $T$-periodic adjoint eigenfunction $\psi^\odot = (r,g)$ is given as in \cref{prop:pairing}, where $r$ is a $T$-periodic null function of $\Delta^\star(0)$ and satisfies $\langle \psi^\odot,\varphi_0 \rangle_T = \langle r, \Delta'(0)q_0 \rangle_T = 1$, which is possible as $\langle \psi^{\odot},\varphi_0 \rangle_T \neq 0$ due to \cref{prop:pairing}. This leads to the expression
\begin{equation} \label{eq:pdcoeffa}
    a = \frac{1}{2} \langle r, B(\varphi,\varphi) \rangle_T.
\end{equation}
As we have an explicit formula for the coefficient $a$, we can solve \eqref{eq:PDxi^2terms} uniquely for the unknown coefficient $h_2$. Indeed, by an application of \cref{cor:borderedinversemu}, we obtain the unique solution satisfying $\langle \psi^\odot, h_2 \rangle_T = 0$ as
\begin{equation*}
h_2 = B_{0}^{\inv}(B(\varphi,\varphi), -2a).
\end{equation*}
Collecting the $\xi^3$-terms, we get
\begin{equation*} 
    \bigg (\frac{d}{d \tau} - A^{\odot \star}(\tau) \bigg) jh_3(\tau) = [C(\tau;\varphi(\tau),\varphi(\tau),\varphi(\tau)) + 3 B(\tau;\varphi(\tau),h_2(\tau))]r^{\odot \star} -6aj\dot{\varphi}(\tau) - 6c j\varphi(\tau).
\end{equation*}
Since $0$ is a Floquet exponent in the $T$-antiperiodic setting, the Fredholm solvability condition implies that
\begin{equation*} 
    c = \frac{1}{3} \int_0^{T} \langle [C(\tau;\varphi(\tau),\varphi(\tau),\varphi(\tau)) + 3 B(\tau;\varphi(\tau),h_2(\tau))]r^{\odot \star} - 6aA^{\odot \star}(\tau)j\varphi(\tau), \varphi^{\odot}(\tau) \rangle d\tau.
\end{equation*}
Thus, the critical coefficient $c$ in the critical normal form for the period-doubling bifurcation has been computed, and it follows from \cref{prop:pairing} that 
\begin{equation} \label{eq:normalformcoeff pd}
    c = \frac{1}{3} \langle p, C(\varphi,\varphi,\varphi) + 3 B(\varphi,h_2) -6a \Delta'(0)\dot{q} \rangle_T.
\end{equation}

\subsection{Neimark-Sacker bifurcation} \label{subsec:NS}
Suppose that \eqref{eq:DDE} has a $T$-periodic solution $\gamma$ with $\Lambda_0 = \{ 1, e^{\pm i\omega T} \},$ where all Floquet multipliers on the unit circle are simple and are in absence of strong resonances \eqref{eq:resonances}. Hence, there exist $T$-periodic (adjoint) eigenfunctions $\varphi$ and $\varphi^\odot$ satisfying
\begin{equation*}
    \bigg(\frac{d}{d \tau} - A(\tau) + i \omega \bigg)\varphi(\tau) = 0, \quad \bigg(\frac{d}{d \tau} + A^\star(\tau) - i \omega \bigg)\varphi^\odot(\tau) = 0, \quad \langle \varphi^\odot, \varphi \rangle_T = 1.
\end{equation*}
The (adjoint) eigenfunctions are given as in \cref{prop:pairing}, where $q$ is a null function of $\Delta(i \omega)$ and $p$ is a null function of $\Delta^\star(i \omega).$ To determine the eigenfunction $\varphi$ uniquely, we also impose that $\langle q^\star, q \rangle_T = 1$, where $q^\star$ is the Hermitian adjoint of $q$. The corresponding 3-dimensional center manifold $\mathcal{W}_{\loc}^{c}(\Gamma)$ at the Neimark-Sacker bifurcation can be parametrized locally near $\Gamma$ in $(\tau,\xi)$-coordinates as
\begin{align*}
    \mathcal{H}(\tau,\xi,\bar{\xi}) &= \gamma_\tau + \xi \varphi(\tau) + \bar{\xi}\bar{\varphi}(\tau)
     + \frac{1}{2}h_{20}(\tau)\xi^2 + h_{11}(\tau) \xi \bar{\xi} + \frac{1}{2} h_{02}(\tau)\bar{\xi}^2 \\
     & +\frac{1}{6}h_{30}(\tau)\xi^3 + \frac{1}{2} h_{21}(\tau) \xi^2 \bar{\xi} +  \frac{1}{2}h_{12}(\tau) \xi \bar{\xi}^2  + \frac{1}{6} h_{03}(\tau) \bar{\xi}^3 + \mathcal{O}(|\xi|^4), \quad \tau \in \mathbb{R}, \ \xi \in \mathbb{C},
\end{align*}
with $h_{ij}$ $T$-periodic and $h_{ij} = \bar{h}_{ji}$ so that $h_{11}$ is real. To derive the normal form coefficients $a$ and $c$, the homological equation reads
\begin{equation*}
        j\bigg(\frac{\partial \mathcal{H}(\tau,\xi,\bar{\xi})}{\partial \tau} \dot{\tau} + \frac{\partial \mathcal{H}(\tau,\xi,\bar{\xi})}{\partial \xi} \dot{\xi} + \frac{\partial \mathcal{H}(\tau,\xi,\bar{\xi})}{\partial \bar{\xi}} \dot{\bar{\xi}} \bigg) = A_{0}^{\odot \star}j(\mathcal{H}(\tau,\xi,\bar{\xi})) + G(\mathcal{H}(\tau,\xi,\bar{\xi})).
\end{equation*}
Collecting the $\xi^0$- and $\xi^1$- or $\bar{\xi}^1$-terms  in \eqref{eq:homologicalequation} yield
\begin{equation*}
    \frac{d}{d\tau} j({\gamma}_\tau) = A_0^{\odot \star} j(\gamma_\tau) + G(\gamma_\tau), \quad \bigg(\frac{d}{d \tau} - A^{\odot \star}(\tau) + i \omega \bigg)j\varphi(\tau) = 0,
\end{equation*}
or its complex-conjugate, which was already known. Collecting the $\xi^2$- or $\bar{\xi}^2$-terms lead to
\begin{equation*}
    \bigg(\frac{d}{d \tau} - A^{\odot \star}(\tau) + 2i\omega \bigg)jh_{20}(\tau) = B(\tau;\varphi(\tau),\varphi(\tau))r^{\odot \star}
\end{equation*}
or its complex-conjugate. This equation has a unique solution $h_{20}$ since $e^{2 i \omega T}$ is not a Floquet exponent because we are in the absence of strong resonances. Hence, it follows from \cref{cor:inverse} that
\begin{equation*}
    h_{20}(\tau)(\theta) = e^{2 i \omega\theta} \Delta^{-1}(2 i \omega)[B(\varphi,\varphi)](\tau + \theta).
\end{equation*}
The $|\xi|^2$-terms give
\begin{equation*}
    \bigg( \frac{d}{d \tau} - A^{\odot \star}(\tau) \bigg)j h_{11}(\tau) = B(\tau;\varphi(\tau),\bar{\varphi}(\tau))r^{\odot \star} - a j\dot{\gamma}_\tau.
\end{equation*}
Since $0$ is a Floquet exponent, the Fredholm solvability condition shows that 
\begin{equation*}
    \int_0^T \langle B(\tau;\varphi(\tau),\bar{\varphi}(\tau))r^{\odot \star} - aj \dot{\gamma}_\tau, \psi^{\odot}(\tau) \rangle d\tau = 0, \quad  \bigg(\frac{d}{d \tau} + A^\star(\tau) \bigg)\psi^{\odot}(\tau) = 0, \quad \langle \psi^\odot, \varphi \rangle_T = 1,
\end{equation*}
where $\varphi(\tau) = \dot{\gamma}_\tau = q_0(\tau + \cdot)$. The adjoint eigenfunction $\psi^\odot = (r,g)$ is given as in \cref{prop:pairing}, where $r$ is a $T$-periodic null function of $\Delta^\star(0)$ and satisfies $\langle \psi^\odot,\varphi \rangle_T = \langle r, \Delta'(0)q_0 \rangle_T = 1$. Note that $\langle \psi^{\odot},\varphi \rangle_T \neq 0$ due to \cref{prop:pairing}. This leads to the expression
\begin{equation*}
    a = \langle r,B(\varphi,\varphi) \rangle_T.
\end{equation*}
With $a$ defined in this way, let $h_{11}$ be the unique solution of \eqref{eq:PDxi^2terms} under the extra condition $\langle \varphi^{\odot}, h_{11} \rangle_T$, arising from the Fredholm solvability condition. It follows from \cref{cor:borderedinversemu} that this unique solution can be represented as
\begin{equation*}
    h_{11} = B_{0}^{\inv}(B(\varphi,\bar{\varphi}), -a).
\end{equation*}
Finally, the coefficients of the $\xi^2 \bar{\xi}$-terms provide the singular equation
\begin{align*}
    \bigg(\frac{d}{d \tau} - A^{\odot \star}(\tau) + {i \omega} \bigg)j h_{21}(\tau)  &= [B(\tau;h_{20}(\tau),\bar{\varphi}(\tau)) + 2B(\tau;h_{11}(\tau),\varphi(\tau)) \\
    & + C(\tau;\varphi(\tau),\varphi(\tau),\bar{\varphi}(\tau))]r^{\odot \star} - 2 a j\dot{\varphi}(\tau) -2d j\varphi(\tau),
\end{align*}
and so the Fredholm solvability condition implies that
\begin{align*} 
    d &= \frac{1}{2}\int_0^T \langle [B(\tau;h_{20}(\tau),\bar{\varphi}(\tau)) + 2B(\tau;h_{11}(\tau),\varphi(\tau))  + C(\tau;\varphi(\tau),\varphi(\tau),\bar{\varphi}(\tau))]r^{\odot \star} \nonumber\\ 
    & - 2aA^{\odot \star}(\tau)j\varphi(\tau),  \varphi^\odot(\tau) \rangle  d\tau + i a \omega.
\end{align*}
Thus, the critical coefficient $d$ in the critical normal form for the Neimark-Sacker bifurcation has been computed, and it follows from \cref{prop:pairing} that
\begin{equation} \label{eq:normalformcoeff NS}
    d = \frac{1}{2} \langle p, B(h_{20},\bar{\varphi}) + 2B(h_{11},\varphi)  + C(\varphi,\varphi,\bar{\varphi}) -2a\Delta'(i \omega)[i\omega q + \dot{q}]  \rangle_T + ia\omega.
\end{equation}

\section{Implementation issues} \label{sec:implementation}
The goal of this section is to discretize the boundary-value problems obtained in \cref{sec:periodic normalization} and solve them numerically to compute the critical normal form coefficients \eqref{eq:normalformcoeff fold}, \eqref{eq:normalformcoeff pd} and \eqref{eq:normalformcoeff NS}. Notice that the mentioned normal form coefficients depend on the higher order Fr\'echet derivatives of the operator $F \in C^{k+1}(X,\mathbb{R}^n)$, which is the right-hand side of \eqref{eq:DDE} for some sufficiently large $k \geq 1$. Unfortunately, it is in general not possible to provide a method for the computation of these higher order derivatives. Therefore, we will restrict our attention in this section towards a simple, but still rich subclass of classical DDEs as they are frequently used in applied literature. 
\subsection{The special case of discrete DDEs} \label{subsec: discrete DDEs}
We say that \eqref{eq:DDE} is a \emph{discrete} DDE or a DDE of \emph{point type} if there exist (discrete) delays $0 \eqqcolon \tau_0 < \tau_1 < \tau_2 < \dots < \tau_m \coloneqq h < \infty$ for some $m \in \mathbb{N}$ and a function $f : \mathbb{R}^{n \times (m+1)} \to \mathbb{R}^n$ such that
\begin{equation*}
    F(\varphi) = f(\varphi(-\tau_0),\dots,\varphi(-\tau_m)), \quad \forall \varphi \in X.
\end{equation*}
From this specification, it is clear that \eqref{eq:DDE} takes the form
\begin{equation} \label{eq: discrete DDE}
    \dot{x}(t) = f(x(t),x(t-\tau_1),\dots,x(t-\tau_m)), \quad t \geq 0.
\end{equation}
It is convenient to introduce the linear evaluation operator $\Xi : X \to \mathbb{R}^{n \times (m+1)}$ by $\Xi \varphi \coloneqq (\varphi(-\tau_0),\dots,\varphi(-\tau_m))$ since then $F = f \circ \Xi$. As already announced, the higher order Fr\'echet derivatives of the operator $F$ evaluated at  $\gamma_\tau$ are needed to compute the critical normal form coefficients. The map $L(\tau) = DF(\gamma_\tau) \in \mathcal{L}(X,\mathbb{R}^n)$ can be represented as
\begin{equation} \label{eq:derivative}
    DF(\gamma_\tau)\varphi = \sum_{j=0}^{m} M_j(\tau) \varphi(-\tau_j), \quad \forall \varphi \in X,
\end{equation}
where $M_j(\tau) \coloneqq D_{j+1} f(\gamma(\tau),\gamma(t-\tau_1),\dots,\gamma(t-\tau_m)) \in \mathbb{R}^{n \times n}$ is the $(j+1)$th partial derivative of $f$ evaluated at $\Xi \gamma_\tau$. Recall from \eqref{eq:L(t)varphi} that there is a unique $T$-periodic family $\{\zeta(\tau,\cdot)\}_{\tau \in \mathbb{R}}$ such that $\langle \zeta(\tau,\cdot), \varphi \rangle = \sum_{j=0}^{m} M_j(\tau) \varphi(-\tau_j)$ for all $\varphi \in X$. Hence, $\zeta(\tau,\cdot)$ has jump discontinuities $M_j(\tau)$ at the points $\tau_j$ for $j = 0,\dots,m$ and is constant otherwise. The characteristic operator \eqref{eq:Delta(z)q} and adjoint characteristic operator \eqref{eq:adjoint charac operator} take in this setting the form
\begin{align*}
    (\Delta(z)q)(\tau) &= \dot{q}(\tau) + z q(\tau) - \sum_{j=0}^m e^{-z \tau_j} M_j(\tau)  q(\tau - \tau_j), \quad \forall q \in C_T^1(\mathbb{R},\mathbb{C}^{n}), \\
    (\Delta^\star(z)p)(\tau) &= - \dot{p}(\tau) + z p(\tau) - \sum_{j=0}^m p(\tau + \tau_j)e^{-z \tau_j} M_j(\tau + \tau_j), \quad \forall p \in C_T^1(\mathbb{R},\mathbb{C}^{n \star}).
\end{align*}
To compute the critical normal form coefficients, recall that we need to compute the second and third order derivative from $F$. These higher order Fr\'echet derivatives of $F$ can be expressed in terms of the higher order derivatives of the map $f$. Indeed, if we set $\Phi \coloneqq \Xi \varphi$ and take $r \in \{1,\dots,k+1\}$, then 
\begin{equation} \label{D^r F}
    D^r F(\gamma_\tau)(\varphi_1,\dots,\varphi_r) = \sum_{j_1,\dots,j_r = 1}^{n} \sum_{k_1,\dots,k_r = 0}^m
    D_{{j_1k_1},\dots,{j_r k_r}}^r f_\gamma(\tau) \Phi_{1,j_1k_1}\cdots \Phi_{r,j_r k_r}.
\end{equation}

\subsection{Implementation of the normal form coefficients} \label{subsec:implementationnormalforms}
Numerical implementation of the formulas derived in the preceding sections require the evaluation of integrals of functions over $[0,T]$ and the solution to nonsingular linear BVPs with integral constraints. Such tasks can be carried out in (standard) continuation software for (discrete) DDEs such as \verb|DDE-BifTool| \cite{Engelborghs2002,Sieber2014}, \verb|Knuth| (\verb|PDDE-Cont|) \cite{Szalai2013}, \verb|BifurcationKit| \cite{Veltz2020} and \verb|PeriodicNormalizationDDEs| \cite{Bosschaert2024c}, see also \cite{Roose2007,Krauskopf2022} and the references therein for an overview of the techniques used in numerical bifurcation analysis for (discrete) DDEs. In these software packages, periodic solutions for discrete DDEs are computed via \emph{orthogonal collocation} with piecewise polynomials applied to properly formulated BVPs \cite{Engelborghs2001,Engelborghs2002a}. Numerical continuation of periodic solutions in one or two parameter families of discrete DDEs can also be handled by orthogonal collocation and leads to (large and sparse) linear systems, which can be solved by standard numerical methods. To detect codim 1 bifurcations of limit cycles, one can specify test functions that are based on computing Floquet multipliers \cite{Luzyanina2002,Breda2022a} or characteristic matrices \cite{Sieber2011,Szalai2006}. Numerical continuation of codim 1 bifurcations in two parameter families of discrete DDEs can be done by combining the mentioned methods. For the implementation of our formulas of the critical normal form coefficients, we only encounter $T$-(anti)periodic linear (inhomogeneous) DDEs/ADEs with integral constraints of the form
\begin{equation} \label{eq:BVPs} \tag{BVP}
    \begin{cases}
    \begin{aligned}
        \dot{y}(t) - g(y(t),y(t \pm \tau_1),\dots,y(t \pm \tau_m)) &= h(t,t \pm \tau_1,\dots,t \pm \tau_m), &&t \in [0,T], \\
        y(T + \theta) \pm y(\theta) &= 0, &&\theta \in [-\tau_m,0], \\
        \textstyle\int_0^T \langle s(t), y(t) \rangle dt  &= 0,
    \end{aligned}
    \end{cases}
\end{equation}
where $h,g$ and $s$ are given sufficiently smooth and $T$-(anti)periodic functions, and $g$ is linear. Numerical approximate solutions to \eqref{eq:BVPs} can also be computed using orthogonal collocation, see \hyperlink{mysupplement}{Supplement}.

Using this procedure, all terms inside the $\langle \cdot, \cdot \rangle_T$-pairing of our formulas have been computed numerically up to the desired accuracy. To numerically evaluate the critical normal form coefficients, one can use Gauss-Legendre quadrature to discretize the remaining integral over $[0,T]$, since this technique is also frequently used to numerically approximate \eqref{eq:BVPs}, see \hyperlink{mysupplement}{Supplement} for more information.

\begin{remark} \label{remark:efficient}
Numerical calculations can be simplified by avoiding the construction of adjoint functions through the adjoint characteristic operator and bypassing the need for quadrature formulas. This is achieved by applying the Fredholm alternative to the \emph{finite-dimensional} discretized systems derived from the linear systems generated by the homological equation. This more efficient numerical approach has been implemented in the Julia package, and the resulting formulas are provided in \hyperlink{mysupplement}{Supplement}. \hfill $\lozenge$
\end{remark}

\section{Examples} \label{sec:examples}
In this section, we validate the correctness and effectiveness of the derived and implemented normal form coefficients in two different models. We do this twofold. First, we compare our method with alternative indicators from normal form theory and numerical bifurcation analysis of equilibria. Second, we assess our method by comparing it to a pseudospectral approximation of DDEs, which leads to a (large) system of ODEs. A numerical bifurcation analysis is then conducted using the compatible \verb|Julia| package \verb|BifurcationKit|. Based on these analyses, we demonstrate that our implemented method is more efficient and reliable than the pseudospectral approach, see \cref{subsec:Chenciner}. The source code of the examples has been included into the software \verb|Julia| package \verb|PeriodicNormalizationDDEs| \cite{Bosschaert2024c}. It hopefully provides a good starting point when considering other models.

\subsection{LPC-connected GPD bifurcations in a lumped model of neocortex}
In \cite{Visser2012}, a two-node Hopfield lumped model of neural activity in neocortex with two delays
\begin{equation} \label{eq:neocortex}
\begin{dcases}
    \dot{x}_1(t) \hspace{-7pt}&= - x_1(t) - a  g(b  x_{1}(t-\tau_1)) + c  g(d  x_{2}(t - \tau_2)), \\
  \dot{x}_2(t) \hspace{-7pt}&= - x_2(t) - a  g(b  x_{2}(t- \tau_1)) + c  g(d  x_{1}(t- \tau_2)),
\end{dcases}
\end{equation}
is considered. Here, $x_i$ denotes the activity of node $i \in \{1,2\}$, $\tau_1$ is the time lag of feedback inhibition, $\tau_2$ is the delay of feedforward excitation, and the monotonically increasing function $g : \mathbb{R} \to \mathbb{R}$ represents the combined inhibitory and excitatory synaptic activation. The parameter $a > 0$ measures the strength of inhibitory feedback, while $c > 0$ measures the strength of the excitatory effect. The parameters $b > 0$ and $d > 0$ are saturation rates. To illustrate our results, we take $g(x) = (\tanh(x - 1) + \tanh(1)) \cosh(1)^2$, and fix $b = 2.0, d = 1.2, \tau_1 = 11.6$ and $ \tau_2 = 20.3$, so that $a$ and $c$ are the unfolding parameters. 

\begin{figure}[ht] \label{fig:LPCGPD}
    \includegraphics{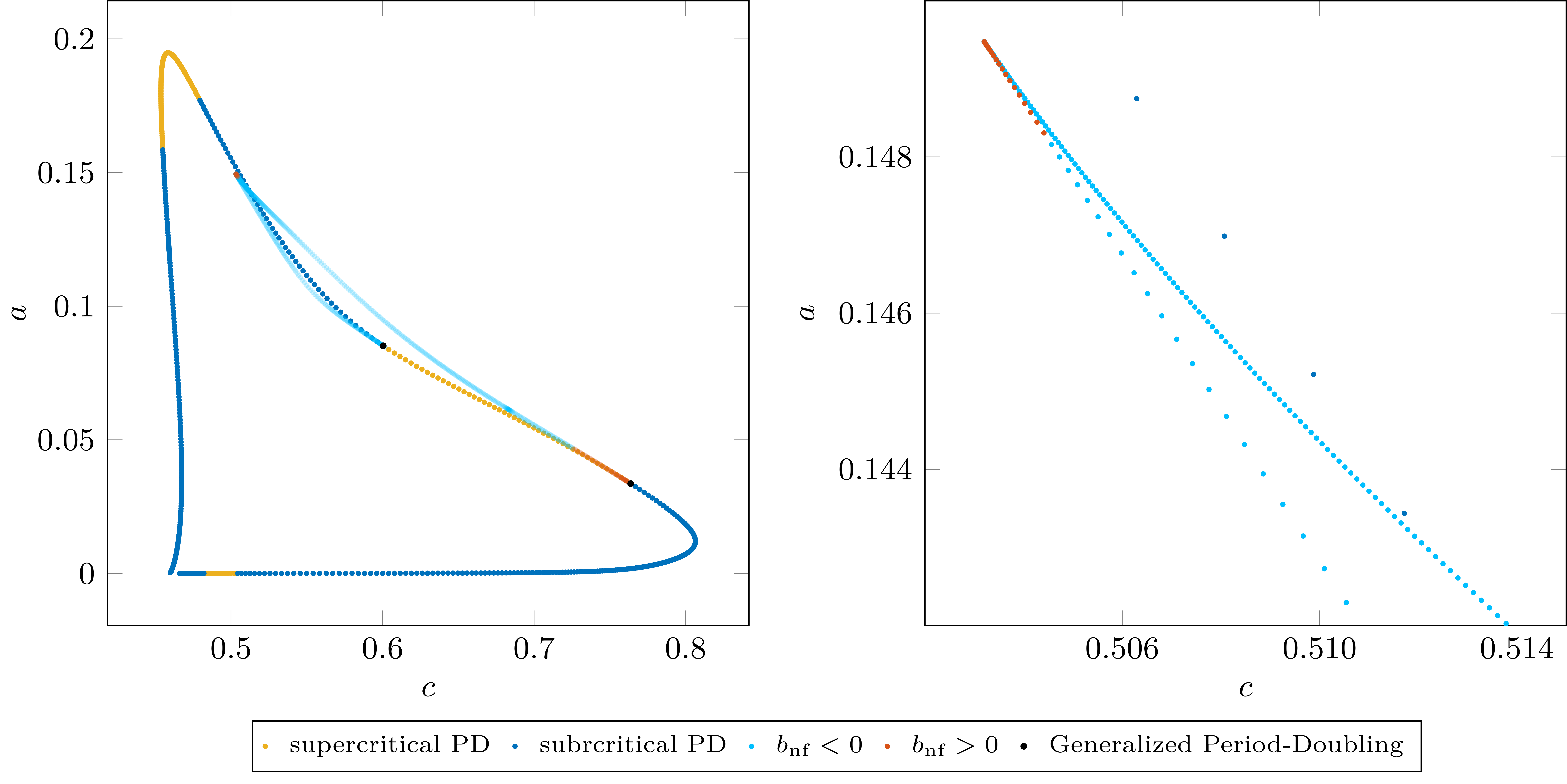}
    \caption{Bifurcation diagrams of \eqref{eq:neocortex} with unfolding parameters $(a,c)$. Left, sub- and supercritical period-doubling (PD) branches are shown, with two identified generalized period-doubling points connected by a limit point of cycles branch. Right, a close-up near the cusp bifurcation located on the LPC-branch.}
\end{figure}

In this setting, a cycle that undergoes a period-doubling bifurcation can be detected and continued in the $(a,c)$-plane, see the online \hyperlink{mysupplement}{Supplement} for its derivation. Computing the period doubling normal form coefficient along this branch yields two generalized period-doubling (GPD) bifurcations, while additional sign changes of this coefficient occur at $\pm \infty$. The validity of these GPD-points is confirmed by the tangential connection to a limit point of cycles (LPC) branch, see \cref{fig:LPCGPD}. Along this curve, a cusp bifurcation (CPC) is both visually identified and numerically confirmed, as the fold normal coefficient $b_{\text{nf}}$ vanishes at this point, while two additional sign changes occur at $\pm \infty$.

\subsection{Chenciner bifurcations in an active control system} \label{subsec:Chenciner}
In \cite{Peng2013}, an active control system to control the response of structures to internal or external excitation with time delay of the following form
\begin{equation} \label{eq:activecontrol}
    \begin{dcases}
        \dot{x}(t) = \tau y(t), \\
        \dot{y}(t) = \tau(-x(t) - g_u x(t-1) -2\zeta y(t) - g_vy(t-1) + \beta x^3(t-1)),
    \end{dcases}
\end{equation}
is considered. Here, $x$ is the displacement of the controlled system, $\tau > 0$ is the time delay represented in the relative displacement and velocity feedback loop, $g_u > 0$ and $g_v > 0$ are rescaled feedback strengths, $\zeta \in \mathbb{R}$ is a rescaled damping while $\beta > 0$ denotes the strength of the external excitation. To illustrate our results, we fix $g_u = 0.1, g_v = 0.52$ and $\beta = 0.1$, so that $\zeta$ and $\tau$ are the unfolding parameters. 

\begin{figure}[ht] \label{fig:activecontrol}
    \includegraphics{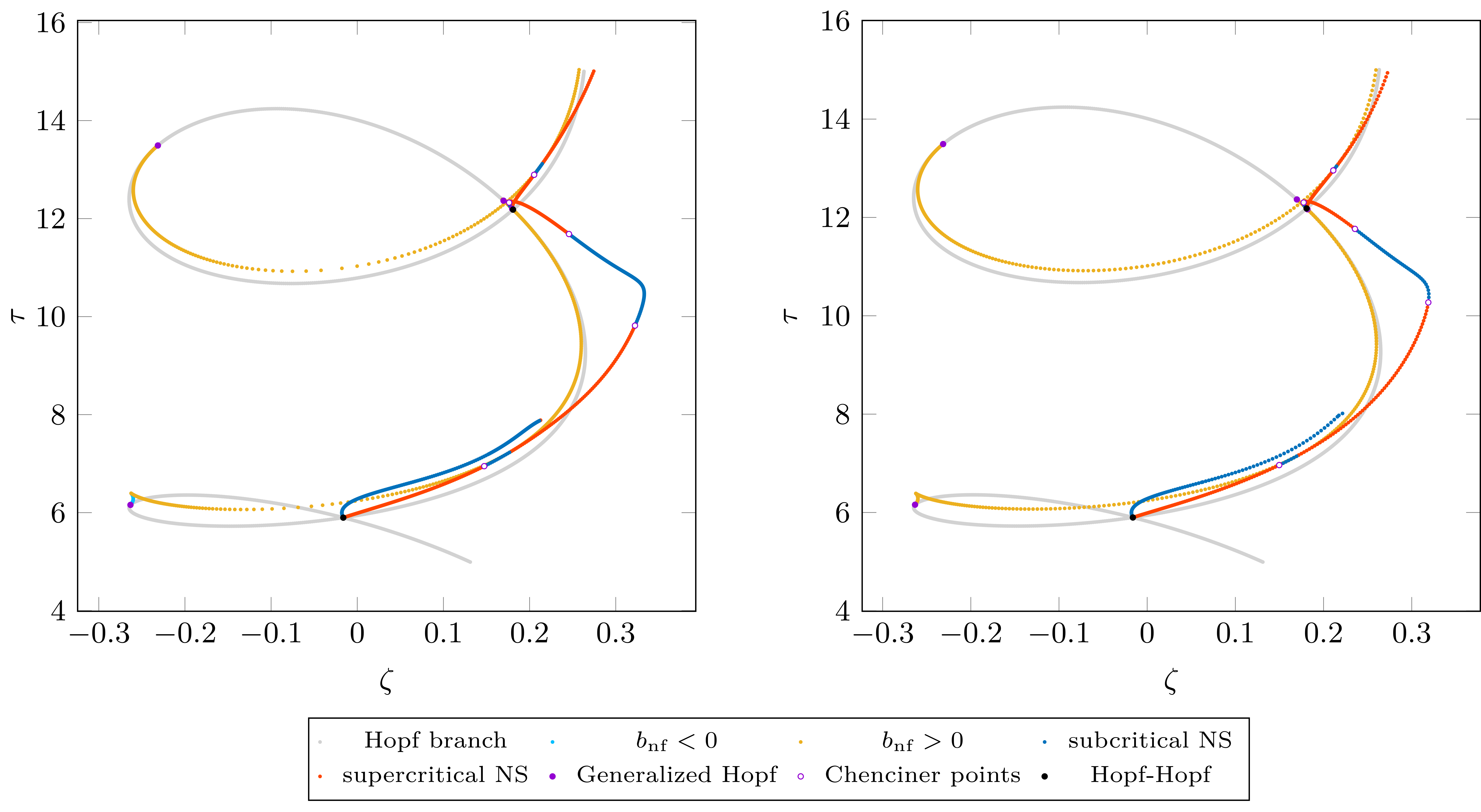}
    \caption{Bifurcation diagrams of \eqref{eq:activecontrol} with unfolding parameters $(\zeta,\tau)$. The left figure is obtained via periodic normalization in DDEs while right figure is obtained via a pseudospectral approximation.}
\end{figure}

In this setting, an extensive numerical bifurcation analysis for the equilibria of this system was performed in \cite[Section 8.3]{Bosschaert2020}. Specifically, a Hopf-Hopf bifurcation of type VI was identified at $\tau \approx 5.90$ from which the associated Neimark-Sacker and Hopf bifurcation curves were continued until they intersected with another Hopf-Hopf point at $\tau \approx 12.18$ of type I. A normal form analysis of the Hopf-Hopf points reveals that the Neimark-Sacker branches can be either sub- or supercritical, a result that aligns precisely with our findings. Indeed, we observe that the sign of the Neimark-Sacker normal form coefficient remains consistent along these branches for parameter values $(\zeta,\tau)$ sufficiently close to the identified Hopf-Hopf points. Computing the Neimark-Sacker normal form coefficient along these branches yields several Chenciner points. To further validate our findings, we constructed a pseudospectral approximation of \eqref{eq:activecontrol}, enabling the application of available numerical bifurcation analysis ODE-tools, see \cref{fig:activecontrol}. 

The pseudospectral approximation method has proven to be a powerful and effective tool for analyzing bifurcations of equilibria (see \cite{Breda2016, Ando2022, Wolff2021} for details). However, when studying bifurcations of limit cycles in DDEs, we found that our normalization-based approach, implemented in \verb|Julia| using continuation algorithms similar to those in \verb|DDE-BifTool| \cite{Engelborghs2002,Sieber2014}, offers better efficiency, greater reliability, and significant performance advantages. For example, obtaining the Neimark-Sacker curve connecting the two Hopf-Hopf points on the left-hand side of \cref{fig:activecontrol} required approximately 58 seconds. In contrast, computing the same curve using the pseudospectral method with 12 discretization points in \verb|BifurcationKit| \cite{Veltz2020} (right panel of \cref{fig:activecontrol}) required 71 minutes, roughly 74 times longer. These performance measurements were obtained on an HP ZBook Firefly 15.6 inch G8 Mobile Workstation PC.

Although the Neimark-Sacker curves in the two plots of \cref{fig:activecontrol} appear qualitatively similar, their precise parameter values differ, and the locations of the Chenciner points do not coincide. Nevertheless, depending on the application, such quantitative discrepancies may be of secondary importance. In these cases, a qualitatively accurate global representation of the bifurcation structure is sufficient for the intended analysis.

\section{Conclusion and outlook}
We derived and implemented explicit computational formulas for the critical normal form coefficients of all codim 1 bifurcations of limit cycles in (classical) DDEs. These coefficients can be used to verify if a codim 1 bifurcation is nondegenerate. In particular, the coefficients for the period-doubling and Neimark-Sacker bifurcation allow one to distinguish between sub- and supercritical bifurcations. The coefficients serve as test functions for detecting some codim 2 bifurcations of limit cycles. The effectiveness of our methods has been demonstrated through applications on various models. We chose to implement our formulas in the high-performance and open-source \verb|Julia| programming language.

The introduction of the characteristic operator $\Delta(z)$ and its adjoint $\Delta^\star(z)$ turned out to be very fruitful for the derivation and implementation of our formulas. This  operator allowed us to reduce spectral problems in $C_T(\mathbb{R},X)$ towards linear (inhomogeneous) differential equations in $C_T(\mathbb{R},\mathbb{C}^n)$. Therefore, we expect that there exists, similar to the results arising from the theory of characteristic matrices \cite[Theorem II.1.2]{Kaashoek1992} and \cite[Theorem IV.5.1]{Diekmann1995}, an equivalence (after extension) between $\Delta(z)$ and $zI - \mathcal{A}^{\odot \star}$. Such an equivalence would enable us to prove that a multiplicity theorem holds for periodic linear DDEs, see \cite[Theorem I.2.1]{Kaashoek1992} for its autonomous version. Moreover, as the characteristic matrix and the mentioned above equivalence can be introduced for a rather large class of closed linear operators generated by autonomous linear evolution equations \cite[Section I.3]{Kaashoek1992}, we expect that this construction can be generalized for a broad class of closable linear operators generated by periodic linear evolution equations.

The underlying normalization technique can also be used to derive critical normal form coefficients for all codim 2 bifurcations of limit cycles by lifting the results from \cite{Witte2014,Witte2013} towards the setting of classical DDEs. When this task is accomplished, the next obvious challenge is the problem of switching to secondary cycle bifurcations rooted at codim 2 points of bifurcations of cycles for ODEs and DDEs. In the finite-dimensional ODE-setting, a generalization of the periodic center manifold theorem \cite{LentjesCMODE}, the periodic normal forms \cite{Iooss1988,Iooss1999}, and the periodic normalization method \cite{Kuznetsov2005,Witte2013,Witte2014} to parameter-dependent systems in the spirit of \cite{Govaerts2007,Kuznetsov2008,Bosschaert2024} is required. When this task has been completed for ODEs, these results could be lifted towards (classical) DDEs by using the sun-star calculus framework in combination with the results from this paper, see also \cref{remark:parameterdependentCMT}. Switching to codim 1 curves rooted at codim 2 points has been accomplished recently for bifurcations of equilibria, see \cite{Bosschaert2020} for switching towards codim 1 curves of nonhyperbolic cycles rooted at certain codim 2 points and \cite{Bosschaert2024a} for switching to the unique homoclinic curve rooted at the codim 2 Bogdanov-Takens bifurcation point.

We restricted ourselves in this paper to the setting of classical DDEs. However, as our results are written in the general context of sun-star calculus, we believe that our method extends to a broader class of delay equations, such as for example renewal equations \cite{Breda2020,Diekmann2008,Diekmann1995}. If one is interested in bifurcations of limit cycles for systems consisting of infinite delay \cite{Diekmann2012,Hino1991,Liu2020} or abstract DDEs \cite{Janssens2019,Janssens2020,Spek2022a,Spek2020,Spek2024}, that describe for example neural fields, it is in known that $\odot$-reflexivity is in general lost \cite[Theorem 12]{Spek2020}. As a consequence, the center manifold theorem for nonhyperbolic cycles from \cite{Article1} does not directly apply. However, we believe that this technical difficulty can be resolved by
employing similar techniques as in \cite{Janssens2020}. We believe that these methods can also be applied to demonstrate the existence of periodic normal forms from \cite{Article2}, and subsequently, the described normalization technique can be used to determine the (critical) normal form coefficients of bifurcating cycles.

\section*{Acknowledgments}
The authors would like to thank Prof. Odo Diekmann (Utrecht University), Dr. Hil Meijer (University of Twente), Prof. Peter de Maesschalck (UHasselt) and Stein Meereboer (Radboud University) for helpful discussions and suggestions.

\appendix
\section{Resolvent representation of $\mathcal{A}^\star$}

In the proof of \cref{cor:spectra}, we used the following result, which is a dual version of \cref{prop:solvability}.
\begin{proposition} \label{prop:resolventadjoint}
If $z \in \mathbb{C}$ is such that $\Delta^\star(z)$ is invertible, then the resolvent of $\mathcal{A}^\star$ at $z$ is given by $(c,g) = (zI- \mathcal{A}^\star)^{-1}\psi^\star$, where
\begin{align}
\begin{split} \label{eq:gresolvent}
    c &= \Delta^\star(z)^{-1} \bigg( \tau \mapsto \psi^\star(\tau)(0^+) + \int_0^h e^{-z s} k(\tau+s)(s) ds \bigg), \\
    g(\tau)(\theta) &= \int_\theta^h e^{\sigma(\theta-s)}c(\tau+s-\theta) d_2 \zeta(\tau+s-\theta,s)  + \int_\theta^h e^{\sigma(\theta-s)} k(\tau+s-\theta)(s) ds.
\end{split}
\end{align}
\end{proposition}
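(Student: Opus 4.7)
The plan is to mirror the argument of \cref{prop:solvability}, but now working in $X^\star$ instead of $X^{\odot\star}$ and using the expression for $A^\star(s)$ from \eqref{eq:D(Astars)}. Throughout, I will regard $\psi^\star \in C_T(\mathbb{R},X^\star)$ as a pair $\psi^\star(\tau)=(\psi^\star(\tau)(0^+), k(\tau))$, where the first entry is the jump at $0$ and $k(\tau)$ is the (absolutely continuous) density of $\psi^\star(\tau)$ on $(0,h]$. Similarly, I treat the unknown $(c,g)$ as a function $\varphi^\star(\tau)=(c(\tau),g(\tau)) \in \mathcal{D}(A^\star(\tau))$ with $g(\tau)(h)=0$.

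First, I expand the resolvent equation $(zI - \mathcal{A}^\star)(c,g) = \psi^\star$ using $\mathcal{A}^\star \varphi^\star(\tau) = A^\star(\tau)\varphi^\star(\tau) + \dot\varphi^\star(\tau)$. Splitting into the ``jump at $0^+$'' component and the density component, I obtain on the one hand a first-order equation for $c(\tau)$ (containing a Stieltjes integral of $g$ against $\zeta$, which will be the source of the characteristic operator $\Delta^\star(z)$), and on the other hand a first-order linear inhomogeneous PDE of the form
\begin{equation*}
    \bigl(\partial_\tau - \partial_\theta + z\bigr) g(\tau)(\theta) \;=\; c(\tau)\, d_2\zeta(\tau,\theta) \;+\; k(\tau)(\theta), \qquad \theta \in (0,h],
\end{equation*}
with homogeneous boundary condition $g(\tau)(h)=0$ (this boundary condition is the $X^\odot$-analogue of $v(\tau)(0) = v_0(\tau)$ in \cref{prop:solvability}, forced by the requirement $(c,g) \in \mathcal{D}(A^\star(\tau))$).

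Second, I solve this transport-type PDE by the method of characteristics, integrating along $\tau+\theta=\text{const}$ from $s=\theta$ up to $s=h$ and using the boundary condition $g(\cdot)(h)=0$. This yields precisely the second formula in \eqref{eq:gresolvent}. Third, I substitute the resulting expression for $g(\tau)(\theta)$ into the ``jump'' equation for $c$. After an application of Fubini's theorem on the double Stieltjes integral (using that the integrand is $T$-periodic in the shifted variable, exactly as in the proof of \cref{lemma:adjoint charac}), the equation collapses to
\begin{equation*}
    \Delta^\star(z)\,c \;=\; \Bigl(\tau \mapsto \psi^\star(\tau)(0^+) + \int_0^h e^{-zs} k(\tau+s)(s)\, ds\Bigr),
\end{equation*}
which by hypothesis can be inverted to obtain the first formula in \eqref{eq:gresolvent}.

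Finally, for $\psi^\star \in C_T^1(\mathbb{R},X^\star)$ (say), I verify that the $(c,g)$ produced actually lies in $\mathcal{D}(\mathcal{A}^\star)$: continuity of $c$ follows from the boundedness of $\Delta^\star(z)^{-1}$, the density $g(\tau)$ lies in $\NBV$ with $g(\cdot)(h)=0$ by construction, and $\tau\mapsto (c(\tau),g(\tau))$ is $C^1$ since the right-hand side is. A density argument together with norm estimates in the spirit of \cref{prop:solvability} (bounding $\|(zI-\mathcal{A}^\star)^{-1}\psi^\star\|_\infty$ in terms of $\|\Delta^\star(z)^{-1}\|$, $\mbox{TV}_\zeta$, and $\max\{1,e^{-h\Re z}\}$) then extends the identity to all of $C_T(\mathbb{R},X^\star)$ and shows that the operator so defined is a bounded linear resolvent. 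The main technical obstacle, as in the proof of \cref{lemma:adjoint charac}, will be justifying the change in the order of integration in the Riemann--Stieltjes integral that converts the kernel $d_2\zeta(\tau,\theta)$ into $d_2\zeta(\tau+s-\theta,s)$; once this step is carried out cleanly, the remainder is a straightforward adaptation of \cref{prop:solvability}.
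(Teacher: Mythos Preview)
Your overall strategy is correct and matches the paper's proof closely: write the resolvent equation as a first-order transport PDE, solve by characteristics with the boundary condition $g(\cdot)(h)=0$, substitute back to obtain the $\Delta^\star(z)$-equation for $c$, and finish with domain verification plus boundedness estimates. The paper organizes this slightly differently---it writes the PDE for the full NBV function $\varphi^\star(\tau)(\theta)$ (as in the proof of \cref{thm:adjoint eigenfunctions2}) rather than for the density $g$, and then extracts $g$ by differentiation and the equation for $c$ from the condition $g(\cdot)(h)=0$ (via \eqref{eq:pidot}) rather than from the jump at $0^+$---but this is a cosmetic reorganisation of the same computation.

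There is, however, a sign slip in your transport operator. From $(\mathcal{A}^\star\varphi^\star)(\tau)=A^\star(\tau)\varphi^\star(\tau)+\dot\varphi^\star(\tau)$ and $A^\star(\tau)f=f'+f(0^+)\zeta(\tau,\cdot)$, the PDE for $\varphi^\star$ reads $(\partial_\tau+\partial_\theta-z)\varphi^\star(\tau)(\theta)=-c(\tau)\zeta(\tau,\theta)-\psi^\star(\tau)(\theta)$; differentiating in $\theta$ gives $(\partial_\tau+\partial_\theta-z)g=-c\,d_2\zeta-k$, not $(\partial_\tau-\partial_\theta+z)g=c\,d_2\zeta+k$. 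Consequently the characteristics are $\tau-\theta=\text{const}$, not $\tau+\theta=\text{const}$; integrating along the latter would produce $c(\tau-s+\theta)$ in place of the $c(\tau+s-\theta)$ that actually appears in \eqref{eq:gresolvent}. Once you correct this sign, the rest of your outline goes through exactly as you describe. (A second minor point: writing $d_2\zeta$ on the right-hand side of a classical PDE is formally awkward since it is a measure; the paper sidesteps this by keeping the PDE at the level of $\varphi^\star$ and only passing to $g$ after integration, but your integrated characteristic formula is of course fine.)
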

\begin{proof}
Let us first assume that $\psi^\star =(d,k) \in C_T^1(\mathbb{R},X^\star)$ and set $\varphi^\star = (c,g)$. Then, $(zI - \mathcal{A}^\star)\varphi^\star = \psi^{\star}$ is equivalent to solving the first order linear inhomogeneous PDE
\begin{equation*}
    \bigg( \frac{\partial}{\partial \tau} + \frac{\partial}{\partial \theta} - z \bigg) \varphi^\star(\tau)(\theta) = - c(\tau) \zeta(\tau,\theta) - \psi^\star(\tau)(\theta).
\end{equation*}
Along the same lines of the proof of \cref{thm:adjoint eigenfunctions2}, we obtain $g$ as in \eqref{eq:gresolvent}. Similar to \eqref{eq:pidot} at $\theta = 0$, we get
\begin{equation*}
    \dot{c}(\tau) = z c(\tau) - \psi^{\star}(\tau)(0^+) - \int_0^h e^{-z s}k(\tau+s)(s) ds - \int_0^h e^{-zs} c(\tau+s) d_2 \zeta(\tau+s,s),
\end{equation*}
and recalling \eqref{eq:adjoint charac operator} yields \eqref{eq:gresolvent}. Note that the solution $c$ to the periodic linear advance differential equation \eqref{eq:gresolvent} is at least in $C_T^1(\mathbb{R},\mathbb{C}^n)$. As $\psi^\star(\cdot)(0^+)$ and $\zeta$ are (at least) $C^1$-smooth in the first component, it follows from \eqref{eq:gresolvent} that $g$ is $C^1$-smooth in the first component and thus $(c,g) \in C_T^1(\mathbb{R},X^\star)$. Clearly, $g(\tau) \in \NBV([0,h],\mathbb{C}^{n \star})$ because $\zeta(\tau,\cdot)$ and $k(\tau)$ are of bounded variation. Hence, $(c,g) \in \mathcal{D}(\mathcal{A}^\star)$ as $g(\tau)(h) = 0$ and so $zI - \mathcal{A}^{\star}$ has dense range.

It remains to show that the resolvent of $\mathcal{A}^\star$ at $z$ is bounded. Let $\psi^\star \in \mathcal{R}(zI - \mathcal{A}^\star)$ be given and note that $\| c \|_\infty \leq G_z \| \Delta^\star(z)^{-1} \| \|\psi^\star\|_\infty$ with $G_z \coloneqq 1 + h \max \{1,e^{-h \Re(z)} \}$, where $\| \Delta^\star(z)^{-1} \| < \infty$ as $\Delta^\star(z)^{-1}$ is bounded since $\Delta^\star(z)$ is closed, recall \cref{lemma:adjoint charac}. Then, $\|(zI - \mathcal{A}^\star)^{-1}\psi^\star \|_\infty \leq G_z[\| \Delta^\star(z)^{-1} \| + h (TV_\zeta G_z + h^2)] \|\psi^\star\|_\infty$, which proves the result.
\end{proof}
\end{sloppypar}

\phantomsection\label{mysupplement}
\begin{mytitle}
  \title{\textsc{supplementary materials for:}\\ \TheTitle}
  \maketitle
\end{mytitle}
\thispagestyle{plain}

\ResetCounters

\begin{sloppypar}
In this supplement, we provide an overview of the (efficient) numerical implementation of the derived normal form coefficients. First, we present the collocation method for numerically solving periodic linear delay differential equations as this technique will be frequently used in our implementation. Second, we provide a detailed explanation of the numerical implementation for each bifurcation individually. The actual software implementation can be found in the freely available software \verb|Julia| package \verb|PeriodicNormalizationDDEs| \cite{Bosschaert2024c}. In the same reference, a step-by-step interactive walkthrough of the examples from \cref{sec:examples} is provided. Additionally, our code is applied to various other models, showcasing its effectiveness, robustness and broad applicability.

\section{Numerical implementation of the normal form coefficients}
\subsection{Collocation method for periodic linear DDEs} \label{supp:collocationsubsec}
In this section, we summarize the collocation method for numerically approximating solutions to \eqref{eq:BVPs}, see \cite{Engelborghs2001,Engelborghs2002a} for more information on this topic. Let $\Pi$ be a \emph{mesh}, i.e a collection of \emph{mesh points} $0 = t_0 < t_1 < \dots < t_L = T$. This mesh partitions the domain $[0,T]$ into $L$ subintervals of the form $[t_i,t_{i+1}]$ for $i=0,\dots,L-1$. Let $\mathbb{P}_M$ denote the vector space of all $\mathbb{R}^n$-valued polynomials of degree at most $M$. The aim is to construct an approximate solution $y$ to \eqref{eq:BVPs} on $[0,T]$ by an element of the following $n(LM + 1)$-dimensional subspace
\begin{equation*}
    S_M(\Pi) \coloneqq \{ u \in C([0,T],\mathbb{R}^n) : u|_{[t_i,t_{i+1}]} \in \mathbb{P}_M \mbox{ for all } i=0,\dots,L-1 \} \subseteq C([0,T],\mathbb{R}^n).
\end{equation*}
Note that $u \in S_M(\Pi)$ is piecewise $C^1$-smooth since at the mesh points we only require continuity instead of differentiability. Within each subinterval $[t_i,t_{i+1}]$, we introduce $M-1$ internal \emph{basis points} $t_{i,j} \coloneqq t_i + \frac{j}{M}(t_{i+1}-t_i)$ for $j=1,\dots,M-1$ and $M$ \emph{collocation points} $c_{i,j} \coloneqq t_i + c_j(t_{i+1}-t_i)$ for $j=1,\dots,M$, where $0\leq c_1 < \dots < c_M \leq 1$ are $M$th degree roots of the Gauss-Legendre polynomial translated to $[0,1]$.

The idea of the collocation method for approximating a solution to \eqref{eq:BVPs} is to find a so-called \emph{collocation solution} $y : [-\tau_m,T] \to \mathbb{R}$ so that $y|_{[0,T]} \in S_M(\Pi)$, $y|_{[t_i,t_{i+1}]}$ satisfies \eqref{eq:BVPs} on the collocation points and $y$ satisfies the additional boundary and phase condition. The collocation solution $y$ can then be represented on $[0,T]$ as
\begin{equation} \label{eq:ycollocation}
    y(t) = \sum_{j=0}^M y_{i,j} \ell_{i,j}(t), \quad \forall t \in [t_{i},t_{i+1}], \ i=0,\dots,L-1,
\end{equation}
where $\ell_{i,j}$ are the Lagrange basis polynomials corresponding to $t_{i,0},\dots,t_{i,M}$ and $y_{i,j}$ are the unknowns. To determine $y_{i,j}$, recall that $y$ must satisfy the $n(LM+1)+1$ equations
\begin{equation} \label{eq:yij}
\begin{cases}
\begin{aligned}
    \dot{y}(c_{i, j})- g(y(c_{i, j}), \ldots, y((c_{i, j} \pm \tau_m) \bmod T)) &= h(c_{i, j}, \ldots, (c_{i, j} \pm \tau_m) \bmod T), \\
    y_{0,0} \pm y_{L,0} &=0, \\
    \textstyle\sum_{i=0}^{L-1} \sum_{j=0}^{M-1} \sigma_{i,j} \langle y_{i,j},s_{i,j}\rangle + \sigma_{L,0} \langle y_{L,0},s_{L,0} \rangle &= 0,
\end{aligned}
\end{cases}
\end{equation}
with the convection $y_{L,0} \coloneqq y_{L-1,M}$ and $s_{i,j} \coloneqq s(t_{i,j})$. If $s$ is not defined on the basis points but computed by collocation, we use the interpolation from \eqref{eq:ycollocation}. Here, in the first system of equations the function $y$ must be substituted by \eqref{eq:ycollocation}, the second equation guarantees $T$-periodicity and the last equation reflects on Gauss-Legendre quadrature approximation of the phase condition. The integration weight $\sigma_{i,j}$ at $t_{i,j}$ is given by 
\begin{equation*}
    \sigma_{i,j} \coloneqq 
    \begin{dcases}
        w_1 t_1, \quad & i=0,\dots,L-1, \ j=0, \\
        w_{j+1}t_{i+1}, \quad & i=0,\dots,L-1, \ j=1,\dots,M-1, \\
        w_{M+1} t_{i+1} + w_1 t_{i+2}, \ & i=0,\dots,L-2, \ j=M, \\
        w_{M+1} t_L, \ & i=L-1, \quad \quad \quad \hspace{0.5pt} j=M,
    \end{dcases}
\end{equation*}
where $w_{j+1}$ denotes the Lagrange quadrature coefficient. Moreover, we assume that the function $s$ is known at the basis points and set. Moreover, the same Gauss-Legendre quadrature approximation formula from \eqref{eq:yij} can also be used to evaluate numerically the formula for the normal form coefficients.

Let $y$ be a solution of \eqref{eq:BVPs} and let $y_{LM}$ be a solution of \eqref{eq:yij}. With this choice of collocation points, the convergence of $y_{LM}$ towards $y$ is guaranteed of order $M$, i.e. $\|y - y_{LM} \|_\infty = \mathcal{O}(H^{M})$, where $H \coloneqq \max_{i=0,\dots,L-1} |t_{i+1}-t_i|$. Moreover, if $\Pi$ is a so-called \emph{constrained mesh} in the sense of \cite[Section 4]{Engelborghs2001}, then \emph{superconvergence} at the mesh points occurs, i.e. $\max_{i=0,\dots,L}|y(t_i) - y_{LM}(t_i)| = \mathcal{O}(H^{2M})$. 

The linear system \eqref{eq:yij} is represented by a (large and sparse) matrix equation, where this matrix has band structure; further details can be found in \cite[Section 5.2]{Engelborghs2001}. Since \eqref{eq:yij} is a linear problem, solving it does not require initial guesses or the computation of Jacobian matrices, as would be necessary for a Newton-Raphson-like method. However, such nonlinear solvers are essential for tasks like computing an approximation of the $T$-periodic solution $\gamma$ of \eqref{eq: discrete DDE}. This can be done by performing a collocation-like discretization on the (nonlinear) system
\begin{equation} \label{eq:collocationgamma}
    \begin{cases}
        \begin{aligned}
            \dot{\gamma}(t) - f(\gamma(t),\gamma(t-\tau_1),\dots,\gamma(t-\tau_m)) &= 0, && t \in [0,T],\\
            \gamma(T + \theta) - \gamma(\theta) &= 0, && \theta \in [-\tau_m,0], \\
            p(\gamma,T)  &= 0,
        \end{aligned}
    \end{cases}
\end{equation}
where $p(\gamma,T)$ is a phase condition. For example, a well-known classical integral phase condition \cite{DOEDEL1991} is 
\begin{equation*}
    p(\gamma,T) = \int_0^T \langle \dot{\gamma}_0(s), \dot{\gamma}_0(s) - \gamma(s) \rangle ds,
\end{equation*}
where $\gamma_0$ is a reference solution.

\subsection{Efficient implementation of the normal form coefficients}
As discussed in \cref{subsec:implementationnormalforms}, the normal form coefficients can be computed using Gauss-Legendre quadrature, where such an integral discretization is illustrated in the last equation of \eqref{eq:yij}. However, as noted in \cref{remark:efficient}, a more efficient method exists for calculating these coefficients. We provide a detailed explanation of this implementation for each bifurcation below. We assume that a $T$-periodic solution $\gamma$ to \eqref{eq: discrete DDE} has been approximated through collocation by solving \eqref{eq:collocationgamma}. The approximate $T$-periodic solution of $\gamma$ can then be evaluated at $t \in [0,T]$ using Lagrange polynomials as illustrated in \eqref{eq:ycollocation}.

\subsubsection{Fold bifurcation}
Recall from \cref{subsec:fold} that the linear term is given by
\begin{equation*}
    \bigg(\frac{d}{d \tau} - A^{\odot \star}(\tau) \bigg)j\varphi_1(\tau) = -j\dot{\gamma}_\tau.
\end{equation*}
According to \cref{prop:pairing2}, the solution of this equation is given by $\varphi_1(\tau)(\theta) = \theta q_0(\tau+\theta) + q_1(\tau+\theta)$ for all $\tau \in \mathbb{R}$ and $\theta \in [-\tau_m,0]$, where $\{q_0,q_1\}$ is a $T$-periodic Jordan chain of $\Delta(0)$. Let us recall that $q_0$ is already computed since $q_0(\tau) = \dot{\gamma}(\tau)$ for all $\tau \in \mathbb{R}$ and $q_1$ is a solution to the BVP
\begin{equation*}
    \begin{cases}
    \begin{aligned}
    \dot{q}_1(\tau) - \textstyle\sum_{j=0}^m M_j(\tau)q_1(\tau-\tau_j) &= -[q_0(\tau) + \textstyle\sum_{j=1}^m \tau_j M_j(\tau)q_0(\tau-\tau_j)], &&\tau \in [0,T], \\
    q_1(T + \theta) - q_1(\theta) &= 0, &&\theta \in [-\tau_m,0], \\
    \end{aligned}
    \end{cases}
\end{equation*}
for which an approximate solution can be obtained by orthogonal collocation. In contrast to the method provided in \cref{subsec:fold}, we do not require to uniquely define $\varphi_1$ by normalizing $q_1$ against $q_0$ since we are only interested in a solution. This is not a problem because the sign of the normal form coefficient is independent of the choice of (generalized) eigenfunction.

Recall that the second order term yields
\begin{equation} \label{supp:fold2}
    \bigg(\frac{d}{d \tau} - A^{\odot \star}(\tau) \bigg)jh_2(\tau) = B(\tau;\varphi_1(\tau),\varphi_1(\tau))r^{\odot \star} - 2aj\dot{\gamma}_\tau - 2j\dot{\varphi}_1(\tau) - 2bj\varphi_1(\tau).
\end{equation}
Rather than relying on \eqref{eq:FSC}, we take a different approach to bypass the need for calculating the adjoint eigenfunction. Recall from \eqref{eq:expandH} that $h_2$ exists and therefore \eqref{supp:fold2} has a solution, meaning that \eqref{eq:solvability} is consistent. Since $0$ is not a simple Floquet exponent (it has algebraic multiplicity $2$), we can not use \cref{prop:borderedinversemu} (or \cref{cor:borderedinversemu}) directly. Instead, if we write out \eqref{supp:fold2} in two components, the second component yields, as in the proof of \cref{prop:solvability}, a first order linear inhomogeneous PDE that has the solution
\begin{align*}
    h_2(\tau)(\theta) &= h_{20}(\tau+\theta) - 2 \int_{\theta}^0 a \dot{\gamma}(\tau+\theta) + \dot{\varphi}_1(\tau+\theta-s)(s) + b \varphi_1(\tau+\theta-s)(s) ds \\
    &= h_{20}(\tau+\theta) + 2a\theta q_0(\tau+\theta) + \theta^2 \dot{q}_0(\tau + \theta) + 2\theta\dot{q}_1(\tau + \theta) + b(\theta^2 q_0(\tau + \theta) + 2\theta q_1(\tau + \theta)).
\end{align*}
Using this representation, the first component of \eqref{supp:fold2} yields the BVP
\begin{equation*}
   \begin{cases}
    \begin{aligned}
    \dot{h}_{20}(\tau) - \textstyle\sum_{j=0}^m M_j(\tau)h_{20}(\tau-\tau_j) &= B(\tau;\varphi_1(\tau),\varphi_1(\tau)) - 2a[q_0(\tau) + \textstyle\sum_{j=1}^m \tau_j M_j(\tau) q_0(\tau-\tau_j)]\\
    & -[2\dot{q}_1(\tau) + \textstyle\sum_{j=1}^m \tau_j M_j(\tau)(2\dot{q}_1(\tau-\tau_j)-\tau_j \dot{q}_0(\tau-\tau_j))] \\
    & -b[2{q}_1(\tau) + \textstyle\sum_{j=1}^m \tau_j M_j(\tau) (2{q}_1(\tau-\tau_j)-\tau_j {q}_0(\tau-\tau_j))],\\
    h_{20}(T + \theta) - h_{20}(\theta) &= 0,
    \end{aligned}    
    \end{cases}
\end{equation*}
where $\tau \in [0,T]$ and $\theta \in [-\tau_m,0]$. This system is of the form \eqref{eq:BVPs} and therefore the collocation method from \cref{supp:collocationsubsec} applies. Hence, we obtain, as in \eqref{eq:yij}, a system of $n(LM+1)$ linear equations, but without phase condition. Let $\mathbf{p} \in \mathbb{R}^{nLM}$ be a null-vector of discretization of the first $nLM$ equations of this discretization. The Fredholm alternative for finite-dimensional linear systems tells us that
\begin{equation*}
    b_{LM} \coloneqq \frac{\langle \mathbf{p}, B(c_{i,j};\varphi_1(c_{i,j}),\varphi_1(c_{i,j})) -  2\dot{q}_1(c_{i,j}) \hspace{-1pt} - \hspace{-1pt} \sum_{l=1}^m \tau_l M_l(c_{i,j}) (2\dot{q}_1(c_{i,j}-\tau_l)-\tau_l \dot{q}_0(c_{i,j}-\tau_l)) \rangle}{ \langle \mathbf{p}, 2{q}_1(c_{i,j}) + \sum_{l=1}^m \tau_l M_l(c_{i,j})(2{q}_1(c_{i,j}-\tau_l)-\tau_l {q}_0(c_{i,j}-\tau_l) \rangle}
\end{equation*}
is an approximation of the normal form coefficient $b$ from \eqref{eq:normalformcoeff fold}. Here, the second component of the variables appearing in the inner product $\langle \cdot , \cdot \rangle$ on $\mathbb{R}^{nLM}$ must have the same indexing scheme as $\mathbf{p}$.

Recall from \cref{supp:collocationsubsec} that convergence of (order $M$) is guaranteed for the orthogonal collocation method applied to the system above. Therefore, convergence of $b_{LM}$ towards $b$ is also guaranteed.

\subsubsection{Period-doubling bifurcation}
Recall from \cref{subsec:PD} that the linear term is given by
\begin{equation*}
    \bigg(\frac{d}{d \tau} - A^{\odot \star}(\tau) \bigg)j\varphi(\tau) = 0.
\end{equation*}
According to \cref{prop:pairing}, the solution reads $\varphi(\tau)(\theta) = q(\tau+\theta)$ for all $\tau \in \mathbb{R}$ and $\theta \in [-\tau_m,0]$, where $q$ is a $T$-antiperiodic null function of $\Delta(0)$:
\begin{equation*}
    \begin{cases}
    \begin{aligned}
    \dot{q}(\tau) - \textstyle\sum_{j=0}^m M_j(\tau)q(\tau-\tau_j) &= 0, &&\tau \in [0,T], \\
    q(T + \theta) + q(\theta) &= 0, &&\theta \in [-\tau_m,0].
    \end{aligned}
    \end{cases}
\end{equation*}
Again, an approximate solution of this system can be obtained by the orthogonal collocation method. 

Recall that the second order term is given by
\begin{equation} \label{eq:pd1}
    \bigg(\frac{d}{d \tau} - A^{\odot \star}(\tau) \bigg) jh_2(\tau) = B(\tau;\varphi(\tau),\varphi(\tau))r^{\odot \star} - 2aj\dot{\gamma}_\tau.
\end{equation}
We use the same strategy as performed on \eqref{supp:fold2} and therefore avoid using \cref{prop:FSC} and \cref{cor:borderedinversemu}. Note that the solution of the second component of \eqref{eq:pd1} is given by
\begin{equation} \label{eq:h2supppd}
    h_2(\tau)(\theta) = h_{20}(\tau+\theta) + 2 a \theta q_0(\tau+\theta),
\end{equation}
where we recall that $q_0 = \dot{\gamma}$. Using this representation, the first component of \eqref{eq:pd1} yields the BVP
\begin{equation*}
    \begin{cases}
    \begin{aligned}
        \dot{h}_{20}(\tau) - \textstyle\sum_{j=0}^{m} M_j(\tau) h_{20}(\tau-\tau_j) &= B(\tau; \varphi(\tau),\varphi(\tau)) \\
        &-2a[q_0(\tau) + \textstyle\sum_{j=1}^m \tau_j M_j(\tau) q_0(\tau-\tau_j)], &&\tau \in [0,T], \\
        h_{20}(T + \theta) - h_{20}(\theta) &= 0, &&\theta \in [-\tau_m,0].
    \end{aligned}
    \end{cases}
\end{equation*}
Consider the $n(LM+1)$ linear equations obtained from the orthogonal collocation method applied on this BVP. Let $\mathbf{r} \in \mathbb{R}^{nLM}$ be a null-vector of the first $nLM$ equations of this discretization. The Fredholm alternative for finite-dimensional linear systems tells us that
\begin{equation} \label{eq:aLM}
    a_{LM} \coloneqq \frac{\langle \mathbf{r}, B(c_{i,j}; \varphi(c_{i,j}), \varphi(c_{i,j}) \rangle}{ 2\langle \mathbf{r}, q_0(c_{i,j}) + \sum_{l=1}^m \tau_l M_l(c_{i,j}) q_0(c_{i,j}-\tau_l) \rangle}
\end{equation}
is an approximation of the normal form coefficient $a$ from \eqref{eq:pdcoeffa}. Here, the second component of the variables appearing in the inner product $\langle \cdot , \cdot \rangle$ on $\mathbb{R}^{nLM}$ must have the same indexing scheme as $\mathbf{r}$. With $a_{LM}$ defined in this way, we can find an approximate solution for $h_{20}$ by solving the above BVP and thus an approximate solution $h_2$ from \eqref{eq:pd1} is computed.

Collecting the third order terms, we obtain
\begin{equation} \label{eq:h3suppd}
    \bigg (\frac{d}{d \tau} - A^{\odot \star}(\tau) \bigg) jh_3(\tau) = [C(\tau;\varphi(\tau),\varphi(\tau),\varphi(\tau)) + 3 B(\tau;\varphi(\tau),h_2(\tau))]r^{\odot \star} -6aj\dot{\varphi}(\tau) - 6c j\varphi(\tau).
\end{equation}
To avoid the computation of the adjoint eigenfunction $\varphi^\odot$ from \cref{subsec:PD}, we proceed as described above. The solution of the second component of \eqref{eq:h3suppd} is given by
\begin{equation*}
    h_{3}(\tau)(\theta) = h_{30}(\tau+\theta) + 6 \theta(a \dot{q}(\tau+\theta) + c q(\tau+\theta)).
\end{equation*}
Using this representation, the first component of \eqref{eq:h3suppd} yields the BVP
\begin{equation*}
    \begin{cases}
    \begin{aligned}
        \dot{h}_{30}(\tau) - \textstyle\sum_{j=0}^{m} M_j(\tau) h_{30}(\tau-\tau_j) &= C(\tau;\varphi(\tau), \varphi(\tau), \varphi(\tau)) + 3B(\tau; \varphi(\tau),\varphi(\tau)) \\
        &-6a[\dot{q}(\tau) + \textstyle\sum_{j=1}^m \tau_j M_j(\tau) \dot{q}(\tau-\tau_j)] \\
        &-6c[q(\tau) + \textstyle\sum_{j=1}^m \tau_j M_j(\tau) q(\tau-\tau_j)], &&\tau \in [0,T], \\
        h_{30}(T + \theta) - h_{30}(\theta) &= 0, &&\theta \in [-\tau_m,0].
    \end{aligned}
    \end{cases}
\end{equation*}
Consider the $n(LM+1)$ linear equations derived from applying the orthogonal collocation method to this BVP. Let $\mathbf{p} \in \mathbb{R}^{nLM}$ be a null-vector of the first $nLM$ equations of this discretization. The Fredholm alternative for finite-dimensional linear systems tells us that
\begin{align*}
    c_{LM} &\coloneqq \frac{\langle \mathbf{p}, C(c_{i,j};\varphi(c_{i,j}), \varphi(c_{i,j}), \varphi(c_{i,j})) + 3B(c_{i,j}; \varphi(c_{i,j}),\varphi(c_{i,j})) \rangle}{ 6\langle \mathbf{p}, q(c_{i,j}) + \sum_{l=1}^m \tau_l M_l(c_{i,j}) q(c_{i,j}-\tau_l) \rangle} \\
    & -a_{LM} \frac{\langle \mathbf{p}, \dot{q}(c_{i,j}) + \sum_{l=1}^m \tau_l M_l(c_{i,j}) \dot{q}(c_{i,j} - \tau_l)\rangle}{\langle \mathbf{p}, q(c_{i,j}) + \sum_{l=1}^m \tau_l M_l(c_{i,j}) q(c_{i,j}-\tau_l) \rangle}
\end{align*}
is an approximation of the normal form coefficient $c$ from \eqref{eq:normalformcoeff pd}. Here, the second component of the variables appearing in the inner product $\langle \cdot , \cdot \rangle$ on $\mathbb{R}^{nLM}$ must have the same indexing scheme as $\mathbf{p}$.

Recall from \cref{supp:collocationsubsec} that convergence of (order $M$) is guaranteed for the orthogonal collocation method applied to the system above. Therefore, convergence of $c_{LM}$ towards $c$ is also guaranteed.

\subsubsection{Neimark-Sacker bifurcation}
Recall from \cref{subsec:NS} that the linear term is given by
\begin{equation*}
    \bigg(\frac{d}{d \tau} - A^{\odot \star}(\tau) + i \omega \bigg)j\varphi(\tau) = 0.
\end{equation*}
According to \cref{prop:pairing}, the solution reads $\varphi(\tau)(\theta) = e^{i \omega \theta}q(\tau+\theta)$ for all $\tau \in \mathbb{R}$ and $\theta \in [-\tau_m,0]$, where $q$ is a $T$-periodic null function of $\Delta(i \omega)$:
\begin{equation*}
    \begin{cases}
    \begin{aligned}
    \dot{q}(\tau) + i \omega q(\tau) - \textstyle\sum_{j=0}^m e^{- i \omega \tau_j} M_j(\tau)q(\tau-\tau_j) &= 0, \quad \tau \in [0,T], \\
    q(T + \theta) + q(\theta) &= 0, \quad  \theta \in [-\tau_m,0]. \\
    \end{aligned}
    \end{cases}
\end{equation*}
Considering separately the real and imaginary part of this BVP, an approximate solution of this system can be obtained by the orthogonal collocation method. The coefficient $h_{20}$ can be obtained as described in \cref{subsec:NS}. Note that $h_{11}$ satisfies \eqref{eq:pd1} and therefore is given as in \eqref{eq:h2supppd}, where an approximation of the normal form coefficient $a$ is given by $a_{LM}$ specified in \eqref{eq:aLM}.

Collecting the $\xi^2 \bar{\xi}$-terms, we obtain
\begin{align}
\begin{split} \label{eq:h21supp}
    \bigg(\frac{d}{d \tau} - A^{\odot \star}(\tau) + {i \omega} \bigg)j h_{21}(\tau)  &= [B(\tau;h_{20}(\tau),\bar{\varphi}(\tau)) + 2B(\tau;h_{11}(\tau),\varphi(\tau)) \\
    & + C(\tau;\varphi(\tau),\varphi(\tau),\bar{\varphi}(\tau))]r^{\odot \star} - 2 a j\dot{\varphi}(\tau) -2d j\varphi(\tau),
\end{split}
\end{align}
Again, we avoid the computation of the adjoint eigenfunction $\varphi^\odot$ from \cref{subsec:NS}. The solution of the second component of \eqref{eq:h21supp} is given by
\begin{equation*}
    h_{21}(\tau)(\theta) = h_{210}(\tau+\theta) + \frac{2(e^{i \omega \theta} - 1)}{i \omega} (a \dot{q}(\tau+\theta) + d q(\tau+\theta)).
\end{equation*}
Using this representation, the first component of \eqref{eq:h21supp} yields the BVP
\begin{equation*}
    \begin{cases}
    \begin{aligned}
        \dot{h}_{210}(\tau) - \textstyle\sum_{j=0}^{m} M_j(\tau) h_{210}(\tau-\tau_j) + i \omega h_{210}(\tau) &= B(\tau;h_{20}(\tau),\bar{\varphi}(\tau)) 
        + 2B(\tau;h_{11}(\tau),\varphi(\tau)) \\
        &+C(\tau;\varphi(\tau),\varphi(\tau),\bar{\varphi}(\tau)) \\
        & -2a[\dot{q}(\tau) + \textstyle\sum_{j=1}^m \frac{(1-e^{-i\omega \tau_j})}{i \omega} M_j(\tau) \dot{q}(\tau-\tau_j)] \\
        &-2d[q(\tau) + \textstyle\sum_{j=1}^m \frac{(1-e^{-i\omega \tau_j})}{i \omega} M_j(\tau) q(\tau-\tau_j)], \\
        h_{210}(T + \theta) - h_{210}(\theta) &= 0,
    \end{aligned}
    \end{cases}
\end{equation*}
where $\tau \in [0,T]$ and $\theta \in [-\tau_m,0]$. Consider the $2n(LM+1)$ linear equations derived from applying the orthogonal collocation method to this BVP. The factor of $2$ arises from accounting for both the real and imaginary parts of the BVP. Let $\mathbf{p} \in \mathbb{R}^{2nLM}$ be a null-vector of the first $2nLM$ equations of this discretization. The Fredholm alternative for finite-dimensional linear systems tells us that
\begin{align*}
    d_{LM} \hspace{-2pt} &\coloneqq \hspace{-2pt} \frac{\langle \mathbf{p}, B(c_{i,j};h_{20}(c_{i,j}), \bar{\varphi}(c_{i,j})) + 2B(c_{i,j}; h_{11}(c_{i,j}),\varphi(c_{i,j})) + C(c_{i,j},\varphi(c_{i,j}),\varphi(c_{i,j}),\bar{\varphi}(c_{i,j})) \rangle}{2\langle \mathbf{p}, q(c_{i,j}) + \sum_{l=1}^m \frac{(1-e^{-i\omega \tau_l})}{i\omega} M_l(c_{i,j}) q(c_{i,j}-\tau_l) \rangle} \\
    & -a_{LM} \frac{\langle \mathbf{p}, \dot{q}(c_{i,j}) + \sum_{l=1}^m \frac{(1-e^{-i\omega \tau_l})}{i\omega} M_l(c_{i,j}) \dot{q}(c_{i,j} - \tau_l)\rangle}{\langle \mathbf{p}, q(c_{i,j}) + \sum_{l=1}^m \frac{(1-e^{-i\omega \tau_l})}{i\omega} M_l(c_{i,j}) q(c_{i,j}-\tau_l) \rangle}
\end{align*}
is an approximation of the normal form coefficient $d$ from \eqref{eq:normalformcoeff NS}. Here, the second component of the variables appearing in the inner product $\langle \cdot , \cdot \rangle$ on $\mathbb{R}^{2nLM}$ must have the same indexing scheme as $\mathbf{p}$. Recall from \cref{supp:collocationsubsec} that convergence of (order $M$) is guaranteed for the orthogonal collocation method applied to the system above. Therefore, convergence of $d_{LM}$ towards $d$ is also guaranteed.
\end{sloppypar}

\clearpage
\bibliographystyle{siamplain}
\bibliography{references}

\end{document}